\newcommand{\guanyang}[1]{\textcolor{brown}{[GW: \emph{#1}]}}
\setlist{noitemsep, topsep=0pt} 
\definecolor{LinkBlue}{rgb}{.15, .25, .85} 
\renewcommand*{\NAT@spacechar}{~}
\newcommand{\be}{\begin{equation}}
\newcommand{\ee}{\end{equation}}
\newcommand{\ba}{\begin{array}}
\newcommand{\ea}{\end{array}}
\newcommand{\bea}{\begin{eqnarray}}
\newcommand{\eea}{\end{eqnarray}}
\newcommand{\calO}{{\cal O }}
\newcommand{\EE}{\mathbb{E}}
\newcommand{\ZZ}{\mathbb{Z}}
\newcommand{\bR}{\mathbb R}
\newcommand{\bP}{\mathbb P}
\newcommand{\bE}{\mathbb E}
\newtheorem{dfn}{Definition}
\newtheorem{prop}{Proposition}
\newtheorem{eg}{Example}
\newtheorem{lem}{Lemma}
\newtheorem{cor}{Corollary}
\newtheorem{rem}{Remark}
\newtheorem{conj}{Conjecture}
\newtheorem{thm}{Theorem}
\newtheorem*{thm*}{Theorem}
\newcommand{\tcov}{t_{\text{cov}}}
\DeclareMathOperator*{\argmax}{arg\,max}
\DeclareMathOperator*{\argmin}{arg\,min}
\begin{document}

\title{Repeated Averages on Graphs}

\author{Ramis Movassagh\\ ramis@us.ibm.com  \thanks{IBM Quantum, MIT-IBM Watson AI Lab, Cambridge, U.S.A.} 
	\and Mario Szegedy \\szegedy@cs.rutgers.edu\thanks{Department of Computer Science, Rutgers University, New Brunswick,  USA.
	}
	\and Guanyang Wang \\guanyang.wang@rutgers.edu\thanks{Department of Statistics, Rutgers University, New Brunswick,  USA.}
}
\date{}
\maketitle
\thispagestyle{empty}
\begin{abstract} 
Sourav Chatterjee, Persi Diaconis, Allan Sly and Lingfu Zhang (\citep{chatterjee2019phase}), prompted by a question of Ramis Movassagh, renewed the study of a
process proposed in the early 1980s
by Jean Bourgain. A state vector $v \in \mathbb R^n$,
labeled with the vertices of a 
connected graph, $G$,
changes in discrete time steps following the simple rule that
at each step a random edge $(i,j)$ is picked and  $v_i$ and $v_j$ are both replaced by their average $(v_i+v_j)/2$. It is easy to see that the value associated with each vertex converges to $1/n$. The question was how quickly will $v$ be
$\epsilon$-close to uniform in the 
$L^{1}$ norm in the case of 
the complete graph, $K_{n}$,
when $v$ is initialized as a standard basis vector
that takes the value 1 on one coordinate, and zeros everywhere else.
They have established a 
sharp cutoff of $\frac{1}{2\log 2}n\log n + O(n\sqrt{\log n})$.
Our main result is to prove, that 
$\frac{(1-\epsilon)}{2\log2}n\log n-O(n)$ is a general
lower bound for all connected graphs on $n$ nodes.
We also get sharp magnitude of $t_{\epsilon,1}$ for several important families of graphs, including star, expander, dumbbell, and  cycle. In order to establish our results we make several observations 
about the process,
such as the worst case initialization 
is always a standard basis vector. 
Our results add to the body of work of
\cite{aldous1989lower,aldous2012lecture,quattropani2021mixing,cao2021explicit,olshevsky2009convergence}, and others. The renewed interest is 
due to an analogy to a question
related to the Google's supremacy circuit.
For the proof of our main theorem we employ a concept that we call {\em augmented entropy function} which may find independent interest in the computer science and probability theory communities.
\end{abstract} 

\newpage
 \thispagestyle{empty}
\hypersetup{linkcolor=black}
 \tableofcontents

\newpage
\section{Introduction}\label{Sec:Intro}
\subsection{The Averaging Process}

Let $G=\left(V(G),E(G)\right)$ be an undirected, 
connected finite graph. The averaging process 
on $G$ proceeds in discrete time steps,
$t=0,1,\ldots$.
At $t=0$ there is a real number
associated with each node of the graph,
defining the initial vector, $v(0)$.
In each step, $t= 1,2,\ldots$,
we pick an edge {\it randomly and uniformly} from $E$ and replace the values associated with both 
of its end points of this edge by their average. 
Let $(v_1,\dots,v_n)^T$ denote the 
state vector at a given time. 
If in the next step an edge $(i,j)\in E$ is picked, the new state vector becomes
\[
\left(v_1,\; \dots,\; v_{i-1},\; \frac{v_i+v_j}{2},v_{i+1},\; \dots,v_{j-1},\; \frac{v_i+v_j}{2},\; v_{j+1},\;\dots,\; v_n\right)^T.
\]

Let $v(t)=(v_{1}(t),\dots,v_{n}(t))^T$
be the value of $v$ in the $t^{\rm th}$ step
($t=0,1,2,\ldots$). The expression
$\frac{1}{n}\sum_{i=1}^n v_{i}(t)$
remains invariant in every branch of the process. Let
\[
{\bar{v}}\equiv(a,\dots,a)^T, \;\;\;\;\; {\rm where}
\;\;\;\;\;
a\equiv\frac{1}{n} \sum_{i=1}^n v_{i}(0)
\]
An easy argument shows that
for large enough $t$ the state vector $v(t)$ will be 
very likely in the proximity of
${\bar{v}}$.
Most results, including ours, investigate the 
speed of convergence to ${\bar{v}}$. 
This speed depends on {\bf ($i$)}  the graph $G$, {\bf ($ii$)}  the initial vector $v(0)$, and {\bf ($iii$)} the notion of convergence. The convergence in the $L^{2}$ 
norm is easy to analyse, but it is less 
interesting than the convergence in the $L^{1}$ norm,
which is the main subject of our study.
We will also study $2\rightarrow 1$
convergence defined below. To capture
the quantities we are interested in, we have developed
the following general definition:

\begin{dfn}[$\epsilon$-mixing time for $L^q$ metric under the unit $L^p$ sphere initialization]\label{def:tpq}
$$t_{\epsilon,\,p\rightarrow q}(G) \equiv\min
\left\{t\in \mathbb{N}\; \middle\vert \;
\scaleto{\forall\mathstrut}{15pt} \;  v(0) \;{\rm with} \;
\lVert v(0)\rVert_p = 1 :\; 
\sqrt[\scaleto{q\mathstrut}{8pt}]
{\bE\left[\lVert v(t) - \bar v\rVert_q^q\right]} \leq \epsilon\right\},$$
where $\lVert w \rVert_q$ is 
$\sqrt[\scaleto{q\mathstrut}{8pt}]
{\sum w_{i}^{q}}$, the $L^{q}$
norm.
\end{dfn}

We may omit $G$ in the argument when clear from the context. Further, when $p=q$, we simply say ``$L^p$ convergence,'' ``$L^p$ mixing time'' and we write ``$t_{\epsilon,p}(G)$.'' 
Notice that $\lVert v(t) - \bar v\rVert_q^q$ is a random variable, and Definition 
\ref{def:tpq} focuses on its expectation.

Let $A(G)$ be the adjacency matrix of $G$ and $D(G)$  
be the diagonal matrix formed from the 
vertex degrees of $G$. 
The spectrum of the {\em graph 
Laplacian}, $D(G) - A(G)$, is a well-known aid in
analysing
random walks on $G$. 
All eigenvalues of $L(G)$ are non-negative,
the smallest of which is zero, while the 
second smallest, $\lambda_2(G)$
is a controlling parameter that roughly tells
how quickly the random walk on $G$ mixes.
In our case the following parameter
will be central:
\begin{dfn}\label{def:gamma} For a connected, undirected graph $G = (V,E)$, we define
\[
\gamma(G) = \frac{|E(G)|}{\lambda_2(G)}.
\]
\end{dfn}

Since $\lambda_2(G) > 0$ for every connected graph
\cite{fiedler1973algebraic}, we have 
$\gamma(G)<\infty$. 

\subsection{Main results}

Throughout we will use
the notation $ f(x,\epsilon) = 
\Theta_\epsilon (g(x,\epsilon))$ 
when constants $c_\epsilon, C_\epsilon$ exist 
that depend only on $\epsilon$ exist
such that
$ c_\epsilon g(x,\epsilon)
\leq f(x,\epsilon) \leq C_\epsilon g(x,\epsilon)$.
We may omit the $\epsilon$ subscipt
when $c_\epsilon$ and $C_\epsilon$
need not depend on $\epsilon$.

We compute or estimate the mixing times $t_{\epsilon,1}$ and $t_{\epsilon,2\rightarrow1}$  (see Definition \ref{def:tpq}). The $L^1$ convergence, $t_{\epsilon,1}$, is the focus of 
most other works.

For all norms the quantity $\gamma(G)$ defined in Definition \ref{def:gamma} determines convergence 
up-to a logarithmic factor, and the question is when this factor or some part of it is present. 
It has been 
observed earlier by several authors, that 
$t_{\epsilon,2}(G) = \Theta(\gamma(G))$
for every fixed $0< \epsilon< 1$, and the 
constant factor is not large (about $2$). 
We show that the $t_{\epsilon,2\rightarrow 1}$ mixing time is also $\Theta(\gamma(G))$
when the graph Laplacian $L(G)$ admits a delocalized Fiedler vector -- the eigenvector 
the corresponds to the second smallest eigenvalue
(Theorem \ref{thm: t_12,delocalize}).
For the $L^1$ mixing time we prove:

\begin{thm}\label{thm:genlb}
For any connected graph $G$ with $n$ nodes, the $\epsilon$-mixing time satisfies 

$$t_{\epsilon,1}(G) \geq\frac{(1-\epsilon)}{2\log2}n\log n-O(n)$$ 
\end{thm}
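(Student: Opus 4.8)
The plan is to exploit the fact established earlier in the paper that the worst-case initialization for the $L^1$ mixing time is a standard basis vector $e_k$, so it suffices to lower-bound $\mathbb{E}\big[\lVert v(t)-\bar v\rVert_1\big]$ when $v(0)=e_k$ for the \emph{best} choice of $k$. The key observation is that averaging can only spread mass: after $t$ steps the support of $v(t)$ has size at most $t+1$ (each averaging step touches two coordinates and can enlarge the support by at most one). Consequently, if $t$ is small compared to $n$, then with certainty a large fraction of the coordinates of $v(t)$ are still exactly zero, and each such coordinate contributes $a=1/n$ to $\lVert v(t)-\bar v\rVert_1$. Quantitatively, if the support has size $s\le t+1$, then at least $n-s$ coordinates equal zero, contributing $(n-s)/n \ge 1 - (t+1)/n$ to the $L^1$ distance. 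This already gives the right order $n$ but not the correct $n\log n$ constant; the logarithmic factor must come from a sharper accounting of how slowly the \emph{mass itself} equidistributes, not merely how slowly the support grows.

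To get the $\tfrac{1-\epsilon}{2\log 2}\,n\log n$ term, I would track a single ``heavy'' coordinate rather than the whole support. Consider the coordinate $k$ that started at value $1$. Each time an edge incident to the current heavy vertex is selected, its value is halved (and shared with a neighbor); the number of such hits up to time $t$ is, in expectation over the random edge sequence, at most $t \cdot \frac{d_{\max}}{|E|} \cdot (\text{const})$ — more carefully, for the complete graph it is $\Theta(t/n)$ and in general one wants to follow the vertex whose incident-edge probability is smallest, or argue via a union/averaging bound that \emph{some} vertex is hit at most $O(t/n)$ times on average, since the total number of incidences over all vertices in $t$ steps is exactly $2t$. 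If a vertex is hit only $m$ times, its value remains at least $2^{-m}$ (in fact it can only decrease through halving, and each halving at worst multiplies by $1/2$). Then $|v_k(t)-a| \ge 2^{-m} - 1/n$, and one shows that the probability that the chosen starting vertex has been hit at most $m = (1-\epsilon')\log_2 n$ times is bounded below by a constant (via a first-moment / Markov argument, using $\mathbb{E}[\text{hits}] \le t\cdot \frac{2}{n}$, which is $\le (1-\epsilon')\log_2 n$ precisely when $t \le \frac{(1-\epsilon')}{2}\,n\log_2 n = \frac{(1-\epsilon')}{2\log 2}\,n\log n$). On that event the single coordinate $k$ alone contributes $\ge 2^{-m} - 1/n \ge n^{-(1-\epsilon')} - 1/n$ to $\lVert v(t)-\bar v\rVert_1$, and combining with the constant-probability lower bound gives $\mathbb{E}\big[\lVert v(t)-\bar v\rVert_1\big]$ bounded away from $0$; tightening the bookkeeping turns the ``bounded away from $0$'' into the ``$>\epsilon$'' needed in Definition~\ref{def:tpq}, and solving for the threshold $t$ yields the claimed bound with the $-O(n)$ slack absorbing lower-order corrections.

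More precisely, the cleanest route is probably to combine the two effects: bound $\lVert v(t)-\bar v\rVert_1 \ge |v_k(t)-a| + \sum_{i:\,v_i(t)=0}|a| \ge 2^{-m} - \tfrac1n + \big(1 - \tfrac{t+1}{n}\big) - (\text{overlap correction})$, choose the starting vertex by an averaging argument over all $n$ possible starts so that the expected number of hits is $\le 2t/n$, apply Markov to control the tail of the hit-count, and optimize. The delicate point is that the number of times the heavy vertex is hit is not independent across the randomness we need, and the heavy coordinate's value can be slightly larger or the support slightly smaller than the crude bounds suggest — so the honest version requires a union bound over the ``bad'' events that the heavy vertex is hit too often \emph{or} the support grew too fast, each controlled at the constant-probability level, which is exactly why the statement carries a multiplicative $(1-\epsilon)$ rather than being tight.

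\textbf{Main obstacle.} The hardest part is making the $n\log n$ constant $\tfrac{1}{2\log 2}$ come out sharply and \emph{uniformly over all graphs}, rather than just for $K_n$. For a general graph the heavy vertex may have small degree, so it is hit even less often — which only helps the lower bound — but one must be careful that the averaging partners do not quietly reconcentrate mass, and one must pick the starting vertex cleverly (the averaging-over-starts trick, legitimate because the worst case is a basis vector) so that $\mathbb{E}[\text{hits}]\le 2t/n$ holds with the constant $2$ and not something graph-dependent. Verifying that the first-moment/Markov step survives with only a $(1-\epsilon)$ loss, and that the contributions from the zero coordinates and the heavy coordinate do not destructively overlap, is where the real work lies.
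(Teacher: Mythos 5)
Your proposal takes a genuinely different route from the paper (which uses an \emph{augmented entropy} functional $F(v)=S(v)+\beta^{T}v$, shows its expected one-step increase is at most $2C/n$, and notes it must climb from $0$ to roughly $(1-\epsilon)\log n$ before $L^{1}$-mixing), but your route has a fatal quantitative gap at its central step.

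The gap is this: tracking the single starting coordinate $k$ cannot produce a lower bound of $\epsilon$ on $\mathbb{E}\bigl[\lVert v(t)-\bar v\rVert_{1}\bigr]$. Your halving bound is correct --- if vertex $k$ is hit $m$ times then $v_{k}(t)\ge 2^{-m}$, and Markov does give a constant-probability event on which $m\le(1-\epsilon')\log_{2}n$ when $t\le\frac{(1-\epsilon')}{2\log 2}n\log n$. But on that event the contribution of coordinate $k$ to the $L^{1}$ distance is only $2^{-m}-\tfrac1n\ge n^{-(1-\epsilon')}-n^{-1}$, which tends to $0$ as $n\to\infty$. So the conclusion you draw --- that the expectation is ``bounded away from $0$'' --- is false: your argument yields $\mathbb{E}\bigl[\lVert v(t)-\bar v\rVert_{1}\bigr]\ge \delta\cdot n^{-(1-\epsilon')}=o(1)$, not $\ge\epsilon$. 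The auxiliary support bound (support size $\le t+1$) is likewise vacuous in the relevant regime $t=\Theta(n\log n)\gg n$. To make the halving heuristic work one must show that a \emph{constant fraction of the total unit mass} still sits in coordinates whose value is significantly above $1/n$, i.e.\ one must aggregate the halving counts over all of the mass rather than over one coordinate. That aggregate is precisely (up to a factor $\log 2$) the entropy $S(v)$, which is why the paper bounds the per-step entropy increase by $(\log 2)(v_{i}+v_{j})$ (Lemma~\ref{lem:increase}) and converts entropy deficit into $L^{1}$ distance via Fannes' inequality (Corollary~\ref{cor:logn}).

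There is a second, independent obstruction your sketch does not address: for irregular graphs the aggregated (entropy) version of your accounting fails outright --- on the star started at the center, the entropy jumps by a full $\log 2$ in one step, so the naive $\mathcal{O}(1/n)$ per-step bound is false. The paper's fix is the linear correction term $\sum_{i}\beta_{i}v_{i}$ with $\beta$ chosen to solve $L\beta=2\log 2\,(\mathbf{d}-\bar d\,\mathbf{1})$, so that the penalty's expected decrease exactly cancels the degree-weighted entropy gain; this is where the universal constant $\tfrac{1}{2\log 2}$ actually comes from. Your proposal sidesteps this by tracking only the minimum-degree start vertex, but once you repair the first gap by aggregating over all mass, you will run into exactly this irregularity problem and will need something playing the role of the $\beta$-correction.
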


Our bound does not only beat 
the standard argument's 
general $\Omega(n)$ lower bound,
but matches up-to a factor of 
$1-\epsilon$ with the sharp threshold of 
Sourav Chatterjee, Persi Diaconis, Allan Sly and Lingfu Zhang (\citep{chatterjee2019phase})
for the complete graph. Thus our result essentially shows that the complete graph asymptotically mixes  fastest in the $L^1$ metric.

We conjecture, that
$t_{\epsilon,1}(G) = \Theta_\epsilon(\max\{\gamma(G), n\log n\})$
(Conjecture \ref{conj:L1 mixing}).
Towards the conjecture we derive several useful results on general graphs. In particular, the following result gives the slowest initialization under the $L^1$-metric, which can be of use in future research.

\begin{figure}
\begin{centering}
\begin{tabular}{ll}
\includegraphics[scale=0.5]{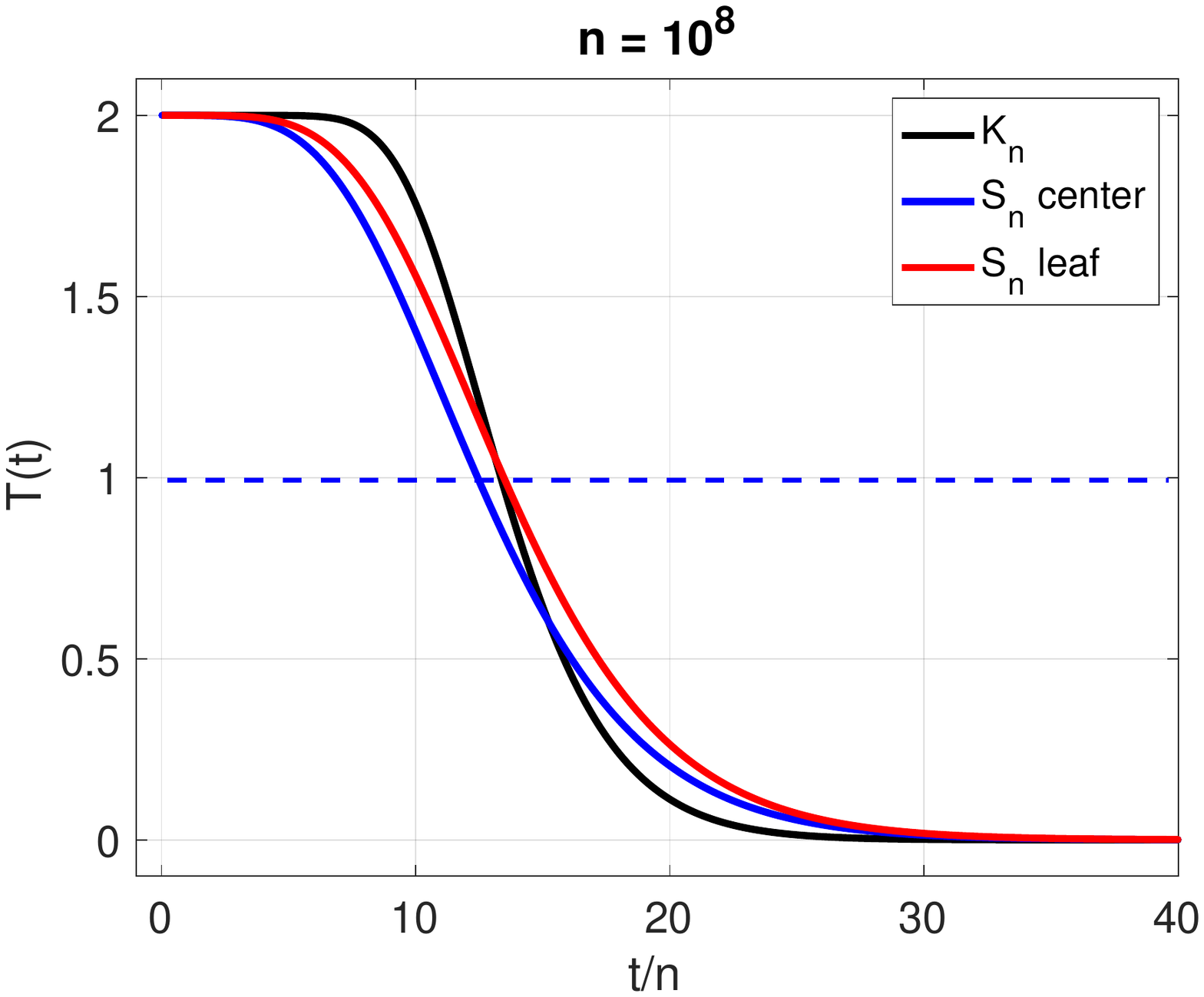}
& 
\includegraphics[scale=0.4]{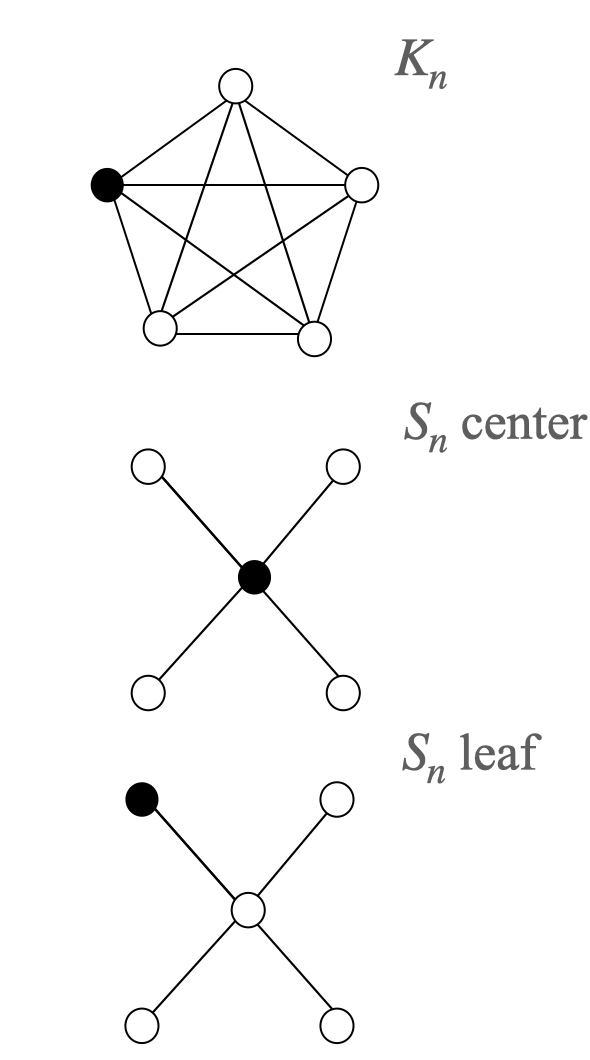}
\end{tabular}
\par\end{centering}
\caption{\label{fig:ComStars_exp}Expected $L^1$ convergence 
of the Averaging Process for the 100,000,000 node
$K_{n}$ and for the star graph, $S_{n}$,
of the same size
with two different starting states
(averaged over $20$ runs in each 
of the three cases; $T(t)$ estimates the 
expectation $\bE[||v(t)-\frac{1}{n}{\bf 1}||_{1}]$
for the respective graphs).
Interestingly, the star graph may slightly 
beat $K_{n}$, when the initial vector
is $1$ at the center, although
our theorem implies, that even when this is the case, it must be a lower order advantage.}
\end{figure}

\begin{thm}[Slowest initialization is at the corner]\label{thm: L1, slowest initialization}
Let $G$ be a graph with $n$ nodes. For every fixed $t$, the slowest initialization:
\[
\argmax_{v(0): \lVert  v(0) \rVert_1 = 1} \bE \lVert v(t) - \bar v \rVert_1 = e_i
\]
for some $i$, where 
\[
e_i = (0, \cdots,0, 1, 0,\cdots, 0)
\]
only has its $i$-th coordinate equals $1$. 
\end{thm}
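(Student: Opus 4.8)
The plan is to exploit two structural facts: the map $v(0) \mapsto v(t) - \bar v$ is \emph{linear} (for each realization of the edge sequence), and consequently $v(0) \mapsto \bE\,\lVert v(t) - \bar v\rVert_1$ is a \emph{convex}, positively homogeneous functional on $\RR^n$. The theorem then reduces to the elementary principle that a convex function on the $\ell_1$-ball is maximized at an extreme point, i.e.\ at one of $\pm e_1, \dots, \pm e_n$.

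First I would record the linearity. Fix a realization of the first $t$ random edges and let $M_t$ be the matrix with $v(t) = M_t\, v(0)$; it is a product of $t$ averaging matrices, each symmetric and doubly stochastic, so $M_t$ is doubly stochastic. Since $\bar v = \frac1n\big(\mathbf 1^\top v(0)\big)\mathbf 1$ is also linear in $v(0)$, we obtain
\[
v(t) - \bar v \;=\; \Big(M_t - \tfrac1n\mathbf 1\mathbf 1^\top\Big)v(0) \;=:\; N_t\,v(0).
\]
(Because $M_t\mathbf 1 = \mathbf 1$, one in fact has $N_t = M_t\big(I - \tfrac1n\mathbf 1\mathbf 1^\top\big)$, so $N_t$ just applies $M_t$ to the mean-zero part of $v(0)$; this refinement is not needed below.)

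Next, set $f(v) := \bE\,\lVert N_t v\rVert_1$. For each fixed realization, $v \mapsto \lVert N_t v\rVert_1$ is convex, being a norm precomposed with a linear map, and it is absolutely homogeneous: $\lVert N_t(\lambda v)\rVert_1 = |\lambda|\,\lVert N_t v\rVert_1$. Taking expectations, $f$ is convex on $\RR^n$, homogeneous of degree $1$, and even in the sense $f(-v) = f(v)$. Restricting $f$ to the cross-polytope $B = \{v : \lVert v\rVert_1 \le 1\}$, a convex function on this compact convex set attains its maximum at an extreme point, and the extreme points of $B$ are precisely $\{\pm e_i\}_{i=1}^n$; using $f(-e_i) = f(e_i)$ we get $\max_{v\in B}f(v) = \max_{1\le i\le n}f(e_i)$. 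Finally, any $v(0)$ with $\lVert v(0)\rVert_1 = 1$ lies in $B$, so $f(v(0)) \le \max_i f(e_i)$, while the optimal $e_i$ itself has unit $\ell_1$-norm; hence this $e_i$ maximizes $\bE\,\lVert v(t)-\bar v\rVert_1$ over the unit $\ell_1$-sphere, as claimed.

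There is no genuinely hard step here; the content is in spotting the convexity/extreme-point structure. The one point to handle with care is that the statement constrains $v(0)$ to the $\ell_1$-\emph{sphere} rather than the ball, which is why homogeneity of $f$ (equivalently, the observation that every sphere point is a convex combination of the vertices $\pm e_i$) is invoked. I would also note that the argument yields only the existence of a basis-vector maximizer and makes no uniqueness claim, which is appropriate since for vertex-transitive $G$ all of $e_1, \dots, e_n$ tie.
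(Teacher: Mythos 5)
Your proposal is correct and follows essentially the same route as the paper: the paper's Proposition on convex combinations is proved by exactly your coupling/linearity observation (same edge sequence for both processes, then the triangle inequality), and the paper likewise combines it with the fact that the $L^1$-ball is the convex hull of $\{\pm e_i\}$ and the sign-flip symmetry $f(-v)=f(v)$. Your explicit matrix formulation $v(t)-\bar v = N_t v(0)$ and the "convex functional maximized at an extreme point" framing are just a cleaner packaging of the same argument.
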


 Consequently, we get sharp magnitude of $t_{\epsilon,1}$ for several important families of graphs, with results summarized below. All our existing results support Conjecture \ref{conj:L1 mixing}. Our result on cycles resolve a conjecture in \cite{spiro2020averaging}, independently from \cite{quattropani2021mixing}, and with  different techniques.

 \begin{table}[htbp]
\begin{center}
\begin{tabular}{c|c|c|c|c}
\toprule
    {\bf Graph} & Bounded deg. expander   & Star  &  Dumbbell   & Cycle\\[4pt]
\midrule
$\boldsymbol{t_{\epsilon, 1}}$      &     $\Theta_\epsilon(n\log n)$ (Prop \ref{prop:expander})&    $\Theta_\epsilon(n\log n)$ (Cor \ref{cor:star})           &      $\Theta_\epsilon(n^3)$ (Thm \ref{thm: dumbbell})           &         $\Theta_\epsilon(n^3)$ (Thm \ref{thm:cycle}) \\[4pt]
\bottomrule
\end{tabular}
\caption{The $L^1$ mixing time for selected graphs (each with $n$ nodes).}
\label{tab:L1}
\end{center}
\end{table}

This paper provides several new techniques for the study of the averaging process. For example, the universal $\Omega(n\log n)$ lower bound relies on 
a novel {\it augmented entropy function} (see Definition \ref{def:AugmentedEntropy}). The  mixing time  for several families of graphs is based on `flow', `comparison' and `splitting' techniques that are introduced and developed in this paper.  These techniques may be of independent interest, and 
a utility for further investigations.

\subsection{Open problems}

Regarding the $L^1$ mixing time, evidences 
point towards
\begin{conj}\label{conj:L1 mixing}
Let $G$ be any connected graph with $n$ nodes, 
\[\qquad t_{\epsilon,1}(G) = \Theta_\epsilon(\max\{\gamma(G), n\log n\})
\] 
\end{conj}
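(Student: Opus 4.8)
The plan is to prove the two matching inequalities $t_{\epsilon,1}(G)\ge c_\epsilon\max\{\gamma(G),n\log n\}$ and $t_{\epsilon,1}(G)\le C_\epsilon\max\{\gamma(G),n\log n\}$ separately. The lower bound is essentially already within reach from the results of this paper; the upper bound, valid for \emph{every} connected graph, is the genuinely open and hard part, and below I describe a route to it and isolate the obstruction that keeps the statement a conjecture rather than a theorem.

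For the lower bound the $n\log n$ term is exactly Theorem \ref{thm:genlb}. The $\gamma$ term is the spectral (i.e. $L^2$) lower bound: it is known that $t_{\epsilon,2}(G)=\Theta(\gamma(G))$, and since $\lVert w\rVert_1\ge\lVert w\rVert_2$ pointwise this transfers to the $L^1$ metric after a second-moment (Paley--Zygmund) estimate comparing $\bE\lVert\cdot\rVert_1$ with $\sqrt{\bE\lVert\cdot\rVert_2^2}$. Concretely, writing $\bar P=I-\frac{1}{2|E|}L(G)$ for the one-step averaging operator in expectation, its nontrivial eigenvalues are $1-\lambda_k/(2|E|)$, so the Fiedler functional $X(t)=\langle\phi_2,v(t)-\bar v\rangle$ obeys $\bE[X(t)]=(1-\lambda_2/(2|E|))^t X(0)$; aligning $v(0)-\bar v$ with $\phi_2$ and controlling the fluctuation of $X(t)$ keeps $\bE\lVert v(t)-\bar v\rVert_1$ above $\epsilon$ until $t=\Theta_\epsilon(\gamma)$. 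Taking the larger of the two bounds gives $t_{\epsilon,1}(G)\ge c_\epsilon\max\{\gamma(G),n\log n\}$.

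For the upper bound I would first invoke Theorem \ref{thm: L1, slowest initialization} to reduce to the single worst-case initialization $v(0)=e_i$, and then split $\bE\lVert v(t)-\bar v\rVert_1\le\lVert\bE[v(t)]-\bar v\rVert_1+\bE\lVert v(t)-\bE[v(t)]\rVert_1$. The first (mean) term is cheap: expanding $e_i-\bar v=\sum_{k\ge2}\phi_k(i)\phi_k$ in the Laplacian eigenbasis gives $\bE[v(t)]-\bar v=\sum_{k\ge2}\phi_k(i)(1-\lambda_k/(2|E|))^t\phi_k$, whose slowest surviving contribution is $\approx|\phi_2(i)|\,\lVert\phi_2\rVert_1\,e^{-t/(2\gamma)}$. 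When $\phi_2$ is delocalized the prefactor $|\phi_2(i)|\,\lVert\phi_2\rVert_1=O(1)$, because the $1/\sqrt n$ amplitude of a spread-out mode exactly cancels the $\sqrt n$ in its $L^1$ norm; hence the mean equilibrates after only $O(\max\{\gamma,n\})$ steps. The entire remaining difficulty — including the spurious $\log n$ produced by the naive $\lVert w\rVert_1\le\sqrt n\,\lVert w\rVert_2$ bound, and indeed the whole $n\log n$ scale (for $K_n$ the mean already equilibrates in $O(n)$, yet the true mixing is $\Theta(n\log n)$) — sits entirely in the \emph{fluctuation} term $\bE\lVert v(t)-\bE[v(t)]\rVert_1$.

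The main obstacle, and the reason the statement remains a conjecture, is controlling this fluctuation term uniformly over all graphs. The natural tool is an entropy method: I would seek a potential $\Phi(v(t))$ — a global analogue of the augmented entropy function of Definition \ref{def:AugmentedEntropy} used for the lower bound — that dominates $\bE\lVert v(t)-\bar v\rVert_1$ through a Pinsker-type inequality and dissipates at rate $1/\max\{\gamma,n\log n\}$, i.e. satisfies a modified log-Sobolev / entropy-production inequality for the averaging dynamics. On well-connected graphs (expanders, $K_n$) this should reproduce the CDSZ cutoff and absorb the Cauchy--Schwarz $\log n$. The hard cases are graphs whose slow modes are \emph{localized}, so that the delocalization cancellation above fails and mass genuinely gets trapped near a bottleneck (as in the dumbbell and the cycle): there a single global entropy inequality seems too weak, and one must instead decompose $G$ along its bottlenecks and patch local estimates together using the flow, comparison, and splitting techniques of this paper. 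Producing such a decomposition with matching constants for an \emph{arbitrary} connected graph, rather than the specific families of Table \ref{tab:L1}, is the step I expect to remain unresolved.
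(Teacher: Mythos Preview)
This statement is a \emph{conjecture}; the paper does not prove it, and your proposal correctly self-identifies as a route sketch with an explicitly unresolved step rather than a proof. Your assessment of the lower bound is right: the $n\log n$ half is Theorem~\ref{thm:genlb} verbatim, and the $\gamma(G)$ half is the lower bound in Theorem~\ref{thm: t_1}. Your detour through a Paley--Zygmund second-moment comparison is unnecessary, though --- the paper simply initializes at $v(0)=u_2/\lVert u_2\rVert_1$ and uses Jensen, $\bE\lVert v(t)\rVert_1\ge\lVert\bE[v(t)]\rVert_1=(1-\tfrac{1}{2\gamma})^t$, which already gives the $\Omega_\epsilon(\gamma(G))$ bound with no moment comparison at all. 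So the lower half of the conjecture is genuinely a theorem of the paper, and you have located the right ingredients.

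On the upper bound, you have correctly isolated the gap: Theorem~\ref{thm: t_1} gives $t_{\epsilon,1}\le 4\gamma(G)\log(\sqrt n/\epsilon)$, which is off from the conjectured $\max\{\gamma,n\log n\}$ by at most a $\log n$ factor, and the obstruction is precisely the Cauchy--Schwarz $\sqrt n$ loss that your fluctuation term $\bE\lVert v(t)-\bE[v(t)]\rVert_1$ would have to absorb. Your mean/fluctuation split and the observation that for $K_n$ the mean already relaxes in $O(n)$ while the true mixing is $\Theta(n\log n)$ are both correct and diagnostic. However, the ``potential $\Phi$ satisfying a modified log-Sobolev inequality with rate $1/\max\{\gamma,n\log n\}$'' remains a wish, not a construction: you have not exhibited such a $\Phi$, and the paper's augmented entropy (Definition~\ref{def:AugmentedEntropy}) goes the \emph{wrong} way --- it upper-bounds the per-step entropy gain, which yields lower bounds on mixing, not the dissipation inequality you would need for an upper bound. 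The bottleneck-decomposition fallback you sketch is exactly what the paper does \emph{ad hoc} for the dumbbell and the cycle, and turning that into a uniform statement over all connected graphs is the open problem. In short: your diagnosis is sound and matches the paper's own framing, but the proposal does not close the gap, and the statement remains a conjecture.
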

We also think that in
the case of $t_{\epsilon,2\rightarrow1}$,
which is  immediate between
$t_{\epsilon,2}(G)$ and
$t_{\epsilon,1}(G)$,
the $\log n$
factor is always present:
\begin{conj}\label{conj:L12 mixing}
Let $G$ be any connected graph with $n$ nodes,
\[
\qquad t_{\epsilon,2\rightarrow1} =  \Theta_\epsilon(\gamma(G)\log n )
\]
\end{conj}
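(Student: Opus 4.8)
The plan is to establish the two matching bounds $t_{\epsilon,2\to1}(G)\le C_\epsilon\,\gamma(G)\log n$ and $t_{\epsilon,2\to1}(G)\ge c_\epsilon\,\gamma(G)\log n$ separately. Throughout write $w(t)=v(t)-\bar v$, which is mean-zero and orthogonal to $\mathbf 1$ since $\tfrac1n\sum_i v_i(t)$ is conserved. Two elementary identities drive everything. First, averaging across a uniformly random edge gives the expected-vector recursion $\mathbb E[w(t+1)\mid w(t)]=\bigl(I-\tfrac1{2|E|}L\bigr)w(t)$, where $L=L(G)$ is the graph Laplacian; hence $\mathbb E[w(t)]=\bigl(I-\tfrac1{2|E|}L\bigr)^t w(0)$. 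Second, the same computation on squared norms yields $\mathbb E\bigl[\lVert w(t+1)\rVert_2^2\mid w(t)\bigr]=\lVert w(t)\rVert_2^2-\tfrac1{2|E|}\,w(t)^\top L\,w(t)$.

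For the upper bound I would use the Rayleigh bound $w^\top L w\ge\lambda_2\lVert w\rVert_2^2$ (valid since $w\perp\mathbf 1$), which turns the squared-norm recursion into $\mathbb E\lVert w(t)\rVert_2^2\le(1-\tfrac{\lambda_2}{2|E|})^t\le e^{-t/(2\gamma)}$ for any unit-$L^2$ initialization. Combining Cauchy--Schwarz $\lVert w\rVert_1\le\sqrt n\,\lVert w\rVert_2$ with Jensen gives $\mathbb E\lVert w(t)\rVert_1\le\sqrt n\,e^{-t/(4\gamma)}$, so the left side drops below $\epsilon$ once $t\ge 2\gamma\log n+O_\epsilon(\gamma)$. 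This yields $t_{\epsilon,2\to1}(G)=O_\epsilon(\gamma\log n)$ for every connected graph, and is the routine half.

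The lower bound is where the real work lies, and I would obtain it by applying Jensen in the opposite direction to the expected vector. Since $\lVert\cdot\rVert_1$ is convex, $\mathbb E\lVert w(t)\rVert_1\ge\lVert\mathbb E[w(t)]\rVert_1=\bigl\lVert(I-\tfrac1{2|E|}L)^t w(0)\bigr\rVert_1$. Taking $w(0)=f$, a unit-norm Fiedler vector (so $Lf=\lambda_2 f$), collapses the operator to a scalar: $\mathbb E\lVert w(t)\rVert_1\ge(1-\tfrac1{2\gamma})^t\,\lVert f\rVert_1$. When $f$ is delocalized, $\lVert f\rVert_1=\Theta(\sqrt n)$, and this stays above $\epsilon$ for all $t\le(1-o(1))\gamma\log n$, giving $t_{\epsilon,2\to1}\ge c_\epsilon\gamma\log n$; this is exactly the regime covered by Theorem \ref{thm: t_12,delocalize}. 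To patch the small-$\gamma$ regime, note $t_{\epsilon,2\to1}(G)\ge t_{\epsilon,1}(G)$ (every unit-$L^1$ vector rescales to a unit-$L^2$ vector with no smaller $L^1$ deviation, by homogeneity of the process), so Theorem \ref{thm:genlb} already forces the $\log n$ factor whenever $\gamma=O(n)$, since there $\gamma\log n=O(n\log n)$.

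The main obstacle is the remaining regime: graphs with $\gamma(G)\gg n$ whose slow spectral modes are localized, i.e.\ the Fiedler vector and low-lying eigenvectors have $\lVert f\rVert_1=o(\sqrt n)$. There the scalar Jensen bound degrades to $t\gtrsim\gamma\log\lVert f\rVert_1$, losing the target $\log n$. Two routes seem plausible, and I expect the crux to be making one of them work uniformly. The first is a structural dichotomy: to show that $\gamma(G)\gg n$ forces enough delocalization at the bottom of the spectrum that some unit-$L^2$ combination $w(0)=\sum_{k:\,\lambda_k\le 2\lambda_2}c_k\phi_k$ of slow eigenvectors has both $\lVert(I-\tfrac1{2|E|}L)^t w(0)\rVert_2$ decaying no faster than $e^{-O(t/\gamma)}$ and $L^1$ norm $\Omega(\sqrt n)$; all of our worked families (cycle, dumbbell) realize this through a genuinely delocalized Fiedler vector, which is encouraging. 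The second is to build a delocalized-by-construction test vector---such as a normalized spread of signed point masses tuned to the semigroup $(I-\tfrac1{2|E|}L)^t$---and to lower bound its surviving $L^1$ mass directly; the difficulty is that forcing large $L^1$ mass tends to load high-frequency modes that decay quickly, so controlling the trade-off between delocalization and decay rate is precisely the technical heart of the conjecture.
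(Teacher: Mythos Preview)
This statement is Conjecture~\ref{conj:L12 mixing}, listed in the paper's ``Open problems'' section; the paper does \emph{not} prove it and offers no proof to compare against. Your upper bound is correct and is exactly Corollary~\ref{cor: t_12}. Your partial lower bounds are also correct and recover precisely what the paper establishes: the delocalized-Fiedler case is Theorem~\ref{thm: t_12,delocalize}, and the small-$\gamma$ patch via $t_{\epsilon,2\to1}\ge t_{\epsilon,1}$ together with Theorem~\ref{thm:genlb} is valid (and Proposition~\ref{prop: comparison t_1 and t_12} is the paper's version of your rescaling observation).

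You have correctly isolated the genuine gap: graphs with $\gamma(G)\gg n$ whose low-lying Laplacian eigenvectors are localized. Neither of your two proposed routes is carried through, and the paper has nothing further to offer here either---this is exactly why the statement is posed as a conjecture. The one additional hint in the paper is Example~\ref{eg: binary tree, t12} (the binary tree), where the Fiedler vector is not known explicitly but an ad~hoc delocalized test vector $v(0)=\frac{1}{\sqrt{n-1}}(0,1,\ldots,1,-1,\ldots,-1)$ is analyzed directly via a hand-tailored recursion rather than via the eigenbasis; this is an instance of your ``delocalized-by-construction'' route, but the argument there exploits the specific tree structure and does not obviously generalize. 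So your proposal is an accurate map of the known territory and the open frontier, not a proof.
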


It is natural to inquire the cut-off phenomenon for different families of graphs. In \citep{chatterjee2019phase}, the authors  prove a cut-off phenomenon for the complete graph. It is also proved in \cite{quattropani2021mixing} that certain families of graphs do not exhibit cut-offs.   Are there other graphs for which one can prove or disprove a cut-off phenomenon, for instance for star graphs?

\subsection{Organization}
The result of this paper is organized as follows. 
After motivation and a discussion of previous work in Section \ref{sec:motivation},
we state and prove  the convergence speed of the averaging process under  $L^2$, $L^{1}$, and $L^{2\rightarrow 1}$ metrics in Section \ref{subsec:L2},  \ref{subsec:L1} and \ref{subsec:L2->1}, respectively. Some numerical explorations are provided in Section \ref{sec: NumericalExp}. We introduce a related random process and prove  the extremity of the complete graph in Section \ref{sec:MC2}. Useful properties of the averaging process, and some technical proofs are provided in Section \ref{sec:proof}.

\section{Motivation and previous work}\label{sec:motivation}

Repeated averages (an alternative term for the averaging process in some literature) is a simple model for reaching equilibrium in statistical physics. Suppose each vertex value $v_i$ is a local temperature. We can envision an interaction that brings this system to thermal equilibrium. In much of statistical physics the equilibrium is reached via intermediate partial equilibria, where local patches reach an equilibrium and then each patch equilibrates with neighboring patches till full equilibrium is reached \cite[Vol. 5, Section 4]{landau2013course}. Therefore, it is more realistic to consider the averaging process with respect to a locality constraint of the underlying system such as a graph structure or a lattice.

Another motivation comes from a toy model of quantum supremacy circuits, in which each gate enacts a unitary matrix drawn independently from the Haar measure. This random circuit family itself is an abstraction of Google's Sycamore circuit~\cite{boixo2018characterizing,arute2019quantum}. Despite intensive research it remains open to prove that upon measurement and for a randomly selected member of the above family (with appropriate depth and connectivity parameters) the output distribution of the $2^{n}$ possible output strings approaches what is known as the Porter-Thomas distribution.

The effect of a Haar-local unitary is that regardless of the input to each gate, the output probability is the same for all possible outcomes. A key question is: how many such local gates should be applied for the overall output to reach the Porter-Thomas distribution? This crucially depends on the so-called architecture of the quantum circuit, which is the underlying graph of connectivity that dictates the position of the gates in the course of the computation. Therefore, both the power of a quantum circuit and its convergence to the desired output are dependent on the graph that defines the circuit architecture. The averaging process is a classical abstraction of these random quantum circuits.
\begin{figure}
\begin{centering}
\includegraphics[scale=0.4]{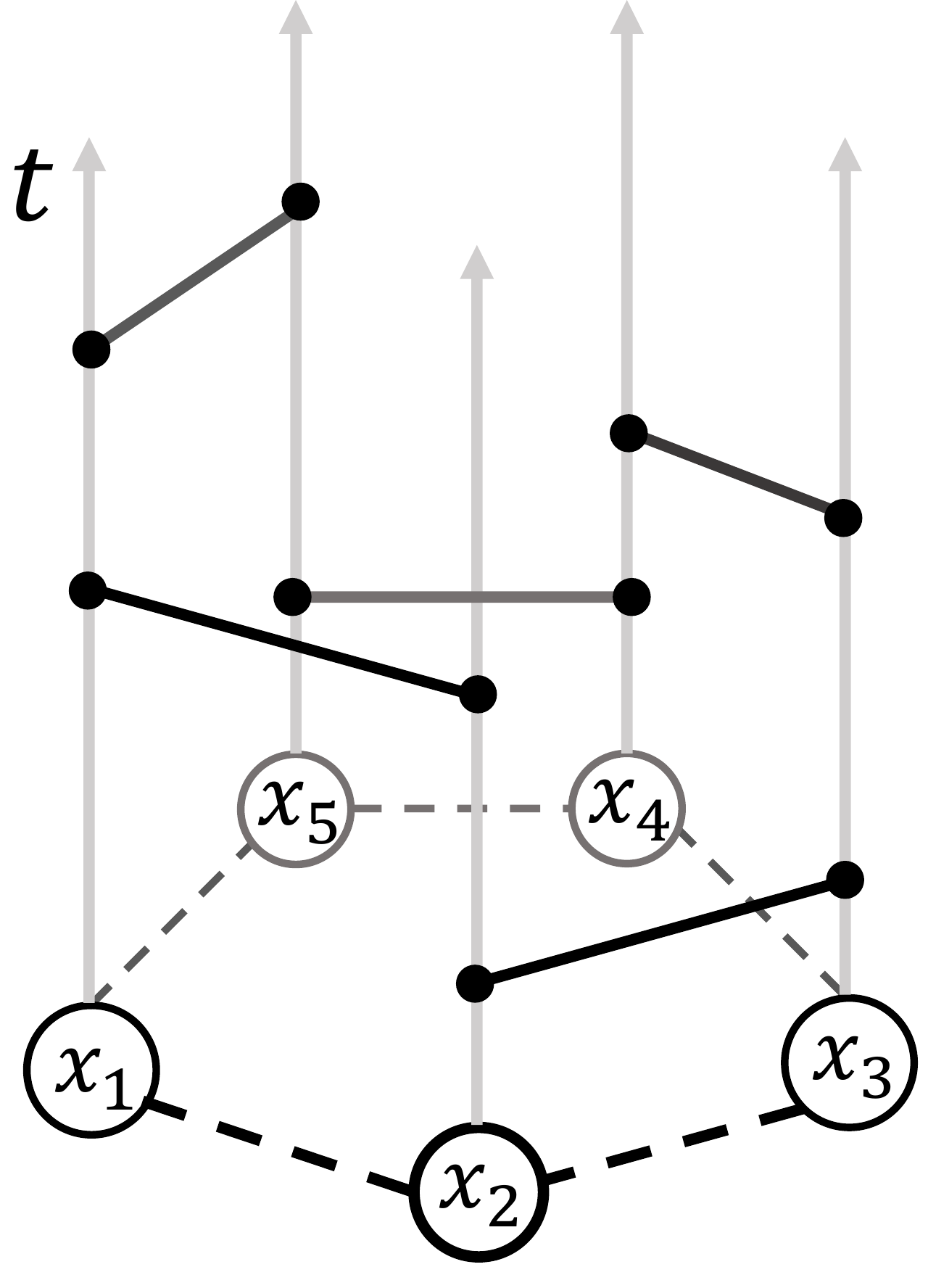}\qquad\qquad\includegraphics[scale=0.38]{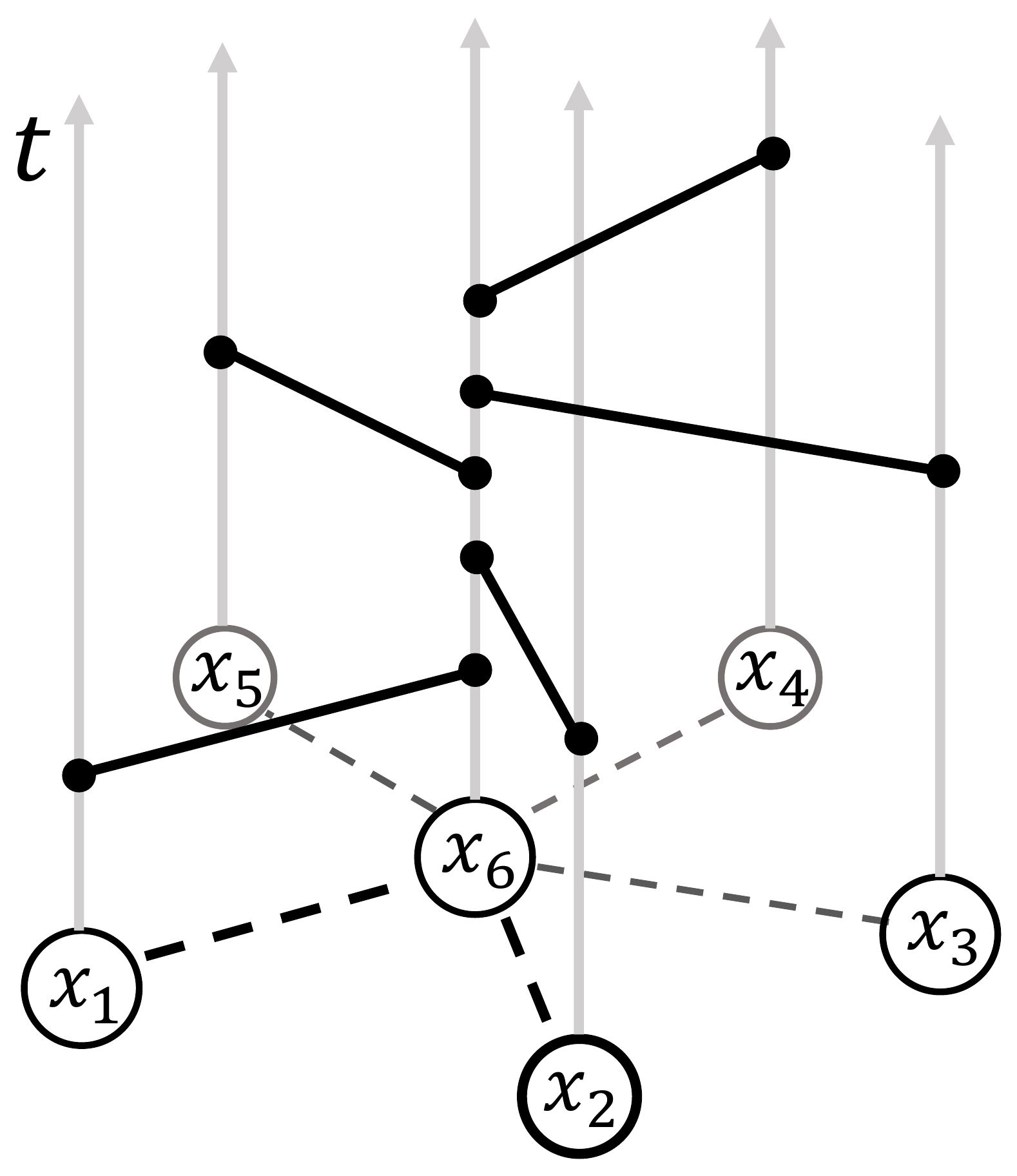}\\
\par\end{centering}
\caption{\label{fig:ClassSwap}Schematics of the classical randomized swap gate circuit. Left: Circuit has a cycle architecture. Right: Circuit has a star architecture.}
\end{figure}

Our toy model is even simpler. It is a family of {\em classical} randomized circuits with 2-bit randomized swap gates. Each gate independently flips an unbiased coin. If Heads, the gate does not change the two bits involved. If Tails, the two bits are swapped. Given a graph, $G$, and a depth parameter $m$, we can define a randomized circuit family ${\cal F}(G,m)$ as follows. The nodes of $G$ correspond to input bits. Edges of $G$ represent bit-pairs where randomized swap gates can be placed. Example circuits for the star and circle graphs are shown in Figure \ref{fig:ClassSwap}. In the family ${\cal F}(G,m)$ each member of the family performs a sequence of $m$ gates $g_{1},\ldots,g_{m}$, where $g_{i}$ is randomly selected from $E(G)$. Assume now that we start with a distribution $p_{1}, \ldots, p_{n}$ on inputs $e_{1}, \ldots, e_{n}$, where $e_{i} =( 
0\cdots 0, \underbrace{1}_{i}, 0 \cdots 0)^T$.
The output is also a distribution on strings of Hamming weight one, since the application of each gate leaves the Hamming weight unchanged. For a random member of the family ${\cal F}(G,m)$ the output-distribution vector, $q$, distributes in the same way as $v(m)$ of the averaging process (in both cases we talk about distribution on vectors), where $v(0) = (p_{1}, \ldots, p_{n})^T$, and the underlying graph is $G$.
Further motivation for the averaging process can be found in Section 1.1 of \cite{chatterjee2019phase}.\\

The averaging process we study 
has been the subject of various works. Most notably, \cite{aldous2012lecture} 
give a survey entitled ``Lectures on the averaging process,'' 
where the problem is 
expressed in the language of continuous-time Markov chains.
Convergence in $L^2$ and entropy
are investigated in connection with the spectral 
gap.  \cite{spiro2020averaging} derives upper bounds for the $L^2$ convergence of the averaging process on hyper-graphs.

\citep{chatterjee2019phase} focused on the $L^{1}$ convergence in the special case of the complete graph. 
They succeeded in showing a sharp cutoff phenomenon answering  a conjecture of Jean Bourgain and a question of Ramis Movassagh. They proved that the $L^1$ convergence happens in time ${1\over 2\log 2}n\log n$ and the cutoff window is of magnitude $n\sqrt{\log n}$. 

 \cite{quattropani2021mixing} considers a dual problem to averaging and characterizes the $L^1$ mixing time. They analyze the cutoff phenomena with respect to the number of random walkers on the graph.

Lastly, \cite{cao2021explicit} derives the convergence to the uniform state of the averaging process on the complete graph from the exponential decay of the ``Gini index'' to zero in the continuum hydrodynamic limit.

There are a number of other works that investigate processes closely related to the averaging process. For example, \cite{olshevsky2009convergence} consider the averaging and consensus finding problems, where they mainly focus on the expected state vector under the averaging process.
Shah's 127 pages manuscript \cite{shah2009gossip} on Gossip Algorithms derives mixing times and conductance for various graphs.
 \cite{haggstrom2012pairwise} considers the Deffuant model in the context of social interactions, where the graph has vertices on the integers with nearest-neighbor interactions. The underlying process averages the randomly chosen adjacent nodes with the possibility of ``censorship''.  
They characterize the compatibility of social interactions as a function of the parameters of the underlying process.  \citep{jain2020fast} consider the Kac walk with the aim of finding efficient algorithms for dimension reduction. For a set of special parameters their construction resembles the averaging process.

\section{Convergence for the averaging process}\label{sec: convergence}

This section includes our theoretical results. Some detailed proofs are deferred to the Appendix.  Preliminary useful properties that we will directly use here are also proved in the Appendix (Section \ref{subsec:general}). 
\subsection{$L^2$ convergence}\label{subsec:L2}

The $L^{2}$ convergence of the process is standard to analyze. We have that for every connected 
 graph $G$ with $n$ nodes, 
 \[
  (2\gamma(G)-1)\log(\epsilon^{-1})\le t_{\epsilon,2}(G) \; \le\; 4\gamma(G)\log(\epsilon^{-1})
 \]
 whereas,
  \begin{equation}\label{eq:l1l21}
 (2\gamma(G) - 1) \log  (\epsilon^{-1})\; \le\; \{t_{\epsilon, 1}(G), \; t_{\epsilon,2\rightarrow 1}(G)\} 
 \; \le\;  4\gamma(G)\log (\sqrt n \epsilon^{-1})
 \end{equation}

We notice that replacing $(v_{i},v_{j})$ by $\left(\frac{v_{i}+v_{j}}{2},\frac{v_{i}+v_{j}}{2}\right)$
decreases $\mathbb{E}\left[\left\Vert v(t)-\bar{v}\right\Vert _{2}^2\right]$ exactly by $(v_{i}-v_{j})^{2}/2$. Based on this observation we have the following theorem. The proof details can be found in Section \ref{subsubsec: proof L2}. The upper bound for Theorem \ref{Thm:L2_convg} appears in \cite{aldous2012lecture}.
\begin{thm}
\label{Thm:L2_convg}Let $v(0)\in\mathbb{R}^{n}$ be an arbitrary
initial vector. Then
the expected $L^{2}$ distance satisfies
\begin{equation}
\sqrt{\mathbb{E}\left[\left\Vert v(t)-\bar{v}\right\Vert _{2}^2\right]}\le\left(1-\frac{1}{2\gamma(G)}\right)^{t/2}\left\Vert \bar{v}-v(0)\right\Vert _{2}\quad.\label{eq:thmL2_Upp}
\end{equation}
Furthermore, the slowest convergence speed is at least $\left(1-1/(2\gamma(G))\right)^{t}:$
\begin{equation}
\sup_{\left\Vert v(0)\right\Vert _{2}=1}\sqrt{\mathbb{E}\left[\left\Vert v(t)-\bar{v}\right\Vert _{2}^2\right]}\ge\left(1-\frac{1}{2\gamma(G)}\right)^{t}\quad.\label{eq:thmL2_Low}
\end{equation}
\end{thm}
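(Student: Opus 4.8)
The plan is to prove the two displays separately: the upper bound~\eqref{eq:thmL2_Upp} by iterating a one-step contraction estimate, and the lower bound~\eqref{eq:thmL2_Low} by combining Jensen's inequality with the linearity of the expected dynamics.

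\emph{Upper bound.} I would start from the elementary fact recalled in the text: when the edge $(i,j)$ is chosen, $\lVert v\rVert_2^2$ drops by exactly $(v_i-v_j)^2/2$; since the coordinate mean $a$ is conserved and $\lVert v-\bar v\rVert_2^2=\lVert v\rVert_2^2-na^2$, the same decrement applies to $\lVert v-\bar v\rVert_2^2$. Averaging over the uniformly random edge and using $\sum_{(i,j)\in E(G)}(v_i-v_j)^2=v^{T}L(G)v$,
\[
\mathbb{E}\big[\lVert v(t+1)-\bar v\rVert_2^2 \mid v(t)\big]
=\lVert v(t)-\bar v\rVert_2^2-\tfrac{1}{2|E(G)|}\,v(t)^{T}L(G)\,v(t).
\]
Since $v(t)-\bar v$ has zero mean it is orthogonal to the all-ones vector, and $L(G)\bar v=0$, so the Rayleigh characterization of $\lambda_2(G)$ gives $v(t)^{T}L(G)v(t)=(v(t)-\bar v)^{T}L(G)(v(t)-\bar v)\ge\lambda_2(G)\lVert v(t)-\bar v\rVert_2^2$. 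Hence $\mathbb{E}\big[\lVert v(t+1)-\bar v\rVert_2^2\mid v(t)\big]\le\big(1-\tfrac{1}{2\gamma(G)}\big)\lVert v(t)-\bar v\rVert_2^2$; taking total expectations and inducting on $t$ yields $\mathbb{E}\big[\lVert v(t)-\bar v\rVert_2^2\big]\le\big(1-\tfrac{1}{2\gamma(G)}\big)^{t}\lVert v(0)-\bar v\rVert_2^2$, and~\eqref{eq:thmL2_Upp} follows on taking square roots.

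\emph{Lower bound.} The point is that the \emph{expected} state evolves linearly: conditioning on $v(t)$ and averaging over the chosen edge, $\mathbb{E}[v(t+1)\mid v(t)]=Mv(t)$ with $M=I-\tfrac{1}{2|E(G)|}L(G)$, so $\mathbb{E}[v(t)]=M^{t}v(0)$. Jensen's inequality for the convex map $w\mapsto\lVert w-\bar v\rVert_2^2$ gives $\mathbb{E}\big[\lVert v(t)-\bar v\rVert_2^2\big]\ge\lVert M^{t}v(0)-\bar v\rVert_2^2=\lVert M^{t}(v(0)-\bar v)\rVert_2^2$, using $M\bar v=\bar v$. Now take $v(0)$ to be a unit-norm Fiedler vector $\phi$ of $L(G)$ (an eigenvector for $\lambda_2(G)$): then $\phi\perp\mathbf 1$ forces $\bar v=0$ and $\lVert v(0)\rVert_2=1$, while $M^{t}\phi=\big(1-\tfrac{\lambda_2(G)}{2|E(G)|}\big)^{t}\phi=\big(1-\tfrac{1}{2\gamma(G)}\big)^{t}\phi$. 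Therefore $\sup_{\lVert v(0)\rVert_2=1}\mathbb{E}\big[\lVert v(t)-\bar v\rVert_2^2\big]\ge\big(1-\tfrac{1}{2\gamma(G)}\big)^{2t}$, which is~\eqref{eq:thmL2_Low} after taking square roots.

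No step is a genuine obstacle; once the right objects are in place the argument is bookkeeping. The mild points to watch are: that the $\lVert\cdot\rVert_2^2$ decrement transfers from $v$ to $v-\bar v$ (conservation of the mean); that the Rayleigh bound is legitimate because $v(t)-\bar v\perp\mathbf 1$; and, for the lower bound, that one should pass to the deterministic expected trajectory via Jensen rather than try to track the random dynamics directly (which does not preserve eigendirections), the Fiedler initialization making the estimate tight. One may also record $\lambda_2(G)\le\tr L(G)=2|E(G)|$, so that $1-\tfrac{1}{2\gamma(G)}\in[0,1)$ and all the powers above are well defined.
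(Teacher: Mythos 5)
Your proposal is correct and follows essentially the same route as the paper: the upper bound via the one-step decrement $(v_i-v_j)^2/2$ summed into the Laplacian quadratic form and bounded by the Rayleigh quotient (the paper phrases this through the spectral decomposition of $M=I-\tfrac{1}{2|E|}L$, which is the same estimate), and the lower bound by initializing at a Fiedler vector and applying Jensen to the linear expected dynamics $\bE[v(t)]=M^{t}v(0)$. No gaps.
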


We are in the position to bound the mixing time, which is the time
it takes for the distribution of the state vector to be $\epsilon$-close
to the uniform vector with respect to the norm of interest. The next corollary shows the $\epsilon$-mixing time under the $L^2$ metric is  $\Theta(\gamma(G)\log(\epsilon^{-1}))$. The proof details can be found in Section \ref{subsubsec: corL2}.
\begin{cor}
\label{Cor:L2mixing}($L^{2}$ mixing time)
\begin{equation}
(2\gamma(G) - 1)\log\epsilon^{-1}\leq t_{\epsilon,2}\leq 4\gamma(G)\log \epsilon^{-1}\quad\label{eq:L2_Mixing}
\end{equation}
\end{cor}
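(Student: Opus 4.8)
The plan is to derive the $L^2$ mixing time bounds of Corollary~\ref{Cor:L2mixing} directly from the two inequalities in Theorem~\ref{Thm:L2_convg}, so essentially no new work is required beyond solving for $t$. For the upper bound, I would start from \eqref{eq:thmL2_Upp}. Since $t_{\epsilon,2}$ is defined via an initialization with $\lVert v(0)\rVert_2 = 1$, I note that $\lVert \bar v - v(0)\rVert_2 \le \lVert v(0)\rVert_2 = 1$ (because $\bar v$ is the orthogonal projection of $v(0)$ onto the all-ones direction, so subtracting it only decreases the norm; alternatively just bound crudely by $1$). Hence $\sqrt{\mathbb E[\lVert v(t)-\bar v\rVert_2^2]} \le (1 - \tfrac{1}{2\gamma(G)})^{t/2}$ uniformly over such initializations. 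To make the right-hand side at most $\epsilon$ it suffices that $\tfrac t2 \log(1 - \tfrac{1}{2\gamma})^{-1} \ge \log \epsilon^{-1}$, i.e. $t \ge 2\log\epsilon^{-1} / \log(1-\tfrac{1}{2\gamma})^{-1}$. Using the elementary inequality $\log(1-x)^{-1} \ge x$ for $x = \tfrac{1}{2\gamma}\in(0,1)$, it is enough to take $t \ge 4\gamma(G)\log\epsilon^{-1}$, which gives the claimed upper bound (taking the ceiling if one wants an integer, which only helps).

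For the lower bound I would use \eqref{eq:thmL2_Low}: for every $t$ there is an initialization with $\lVert v(0)\rVert_2 = 1$ for which $\sqrt{\mathbb E[\lVert v(t)-\bar v\rVert_2^2]} \ge (1-\tfrac{1}{2\gamma(G)})^t$. If $t < t_{\epsilon,2}$ then by definition this quantity must exceed $\epsilon$ for that initialization, so $(1-\tfrac{1}{2\gamma})^{t} > \epsilon$ whenever $t < t_{\epsilon,2}$; equivalently, $(1-\tfrac{1}{2\gamma})^{t_{\epsilon,2}-1} > \epsilon$, hence $t_{\epsilon,2} - 1 < \log\epsilon^{-1}/\log(1-\tfrac1{2\gamma})^{-1}$. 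Now I need an upper bound on $\log(1-x)^{-1}$ in terms of $x$; the standard estimate $\log(1-x)^{-1} \le \tfrac{x}{1-x}$ gives $\log(1-\tfrac1{2\gamma})^{-1} \le \tfrac{1}{2\gamma - 1}$, so $t_{\epsilon,2} < 1 + (2\gamma(G)-1)\log\epsilon^{-1}$. Rearranging (and absorbing the additive $1$, or keeping the bound slightly loose) yields $t_{\epsilon,2} \ge (2\gamma(G)-1)\log\epsilon^{-1}$ as stated — one should double-check the direction of the rounding here, since $t_{\epsilon,2}$ is an integer and the displayed bound has no floor, so a tiny bit of care is needed but nothing substantive.

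The only mild subtlety — and the one place I would be careful — is matching the exact constants in the displayed statement against the elementary logarithmic inequalities: one wants $\log(1-x)^{-1} \ge x$ for the upper direction and $\log(1-x)^{-1} \le x/(1-x)$ for the lower direction, both valid for $x\in(0,1)$, and one must confirm these give exactly the factors $4\gamma$ and $2\gamma-1$ rather than something off by a constant. There is no genuine obstacle; this corollary is a routine consequence of Theorem~\ref{Thm:L2_convg}, and indeed the text says as much ("We are in the position to bound the mixing time"). I would present it in two short paragraphs, upper bound then lower bound, each a two- or three-line computation, and relegate the verification of the two log-inequalities to a footnote or an inline "by the elementary inequality\ldots".
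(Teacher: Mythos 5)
Your upper bound is correct and is exactly the paper's argument: bound $\lVert v(0)-\bar v\rVert_2\le 1$, apply \eqref{eq:thmL2_Upp}, and use $\log(1-x)^{-1}\ge x$ (equivalently $1-x\le e^{-x}$) to solve for $t$, giving $4\gamma(G)\log\epsilon^{-1}$.

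Your lower bound, however, has the logic running in the wrong direction at two points, and as written it does not prove the claim. First, from ``$t<t_{\epsilon,2}$'' you conclude ``$(1-\tfrac{1}{2\gamma})^{t}>\epsilon$.'' That implication is false: $t<t_{\epsilon,2}$ only tells you that the supremum over unit initializations of $\sqrt{\bE[\lVert v(t)-\bar v\rVert_2^2]}$ exceeds $\epsilon$, while \eqref{eq:thmL2_Low} says that this same supremum is at least $(1-\tfrac1{2\gamma})^t$. Two lower bounds on the same quantity tell you nothing about each other; the supremum can exceed $\epsilon$ while $(1-\tfrac1{2\gamma})^t$ sits below $\epsilon$. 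Second, even granting that step, your chain ``$t_{\epsilon,2}-1<\log\epsilon^{-1}/\log(1-\tfrac1{2\gamma})^{-1}$ and $\log(1-\tfrac1{2\gamma})^{-1}\le\tfrac{1}{2\gamma-1}$, hence $t_{\epsilon,2}<1+(2\gamma-1)\log\epsilon^{-1}$'' passes from $A<X$ and $X\ge C$ to $A<C$, which is invalid; and the final ``rearranging'' turns that (purported) upper bound on $t_{\epsilon,2}$ into the desired lower bound, which no rearrangement can do.

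The fix is to run the contrapositive the other way, which is what the paper does. By \eqref{eq:thmL2_Low} together with $\log(1-x)\ge -x/(1-x)$ (your second elementary inequality, used in the opposite role), the supremum at time $t$ is at least $(1-\tfrac{1}{2\gamma})^{t}\ge\exp\bigl(-t/(2\gamma-1)\bigr)$. Therefore, for every $t<(2\gamma(G)-1)\log\epsilon^{-1}$ this supremum strictly exceeds $\epsilon$, so some unit initialization is not yet $\epsilon$-mixed and $t_{\epsilon,2}>t$. Taking $t$ over all such integers yields $t_{\epsilon,2}\ge(2\gamma(G)-1)\log\epsilon^{-1}$. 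You already have every ingredient (the correct eigenvector initialization via \eqref{eq:thmL2_Low}, the inequality $\log(1-x)^{-1}\le x/(1-x)$, and the evaluation $x/(1-x)=\tfrac{1}{2\gamma-1}$ at $x=\tfrac1{2\gamma}$); only the direction of the deduction needs to be reversed.
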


With Corollary \ref{Cor:L2mixing} in hand, we are able to analyze the $L^2$ mixing time for several natural families of graphs. Our results are summarized in Table \ref{tab:L2} below, see Section \ref{subsubsec: proof examples L2} for proof details. 
\begin{table}[htbp]
\begin{center}
\begin{tabular}{c|c|c|c|c}
\toprule
{\bf Graph} & Complete graph $K_n$ & Binary tree $B_n$ & Star $S_{n-1}$ & Cycle $C_n$ \\[4pt]
\midrule
$\boldsymbol{t_{\epsilon, 2}}$      &     $\Theta(n)\log\epsilon^{-1}$           &    $\Theta(n^2)\log\epsilon^{-1}$            &      $\Theta(n)\log\epsilon^{-1}$           &         $\Theta(n^3) \log\epsilon^{-1} $ \\[4pt]
\bottomrule
\end{tabular}
\caption{The $L^2$ mixing time for selected graphs}
\label{tab:L2}
\end{center}
\end{table}

\subsection{$L^{1}$ convergence}\label{subsec:L1}

Although the $L^2$ distance is often regarded the natural metric for its mathematical convenience, two special reasons motivate the study of $L^1$ distance. First,  the $L^1$ metric has a natural interpretation in probability theory. Any discrete probability measure is a unit vector under the $L^1$ metric, with non-negative coordinates.
If the starting vector, $v(0)$, is a probability distribution, it remains so during the entire process.
We then look at how quickly this distribution tends to uniform measured in total 
variation distance.
Second, it is proved in \citep{chatterjee2019phase} that 
for the averaging process the $L^1$ convergence behaves differently from the $L^2$ convergence for the complete graph. They show that it takes $\Theta(n)$ steps  to converge in $L^2$ metric, but $\frac12 (n\log_2n + c\, n \sqrt{\log_2n)}$ steps  to converge in $L^1$ metric. 

In this section we turn to study of the $\epsilon$-mixing time, $t_{\epsilon,1}$.

From the standard $L^{2}$ techniques
we have the following upper and lower bounds, see Section \ref{subsubsec: proof L1} for proof details. 
\begin{thm}[General bounds for $t_{\epsilon,1}$]\label{thm: t_1}
	\begin{align} \label{eqn: t_1, first formula}
		\left(2\gamma(G) - 1\right)  \log  \epsilon^{-1} \;
		\leq  \;  t_{\epsilon,1}  \; \leq \;  4\gamma(G)
		(\log \sqrt n + \log \epsilon^{-1})
	\end{align}
\end{thm}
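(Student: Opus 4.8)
The plan is to piggyback entirely on the $L^2$ analysis already stated in Theorem~\ref{Thm:L2_convg} and Corollary~\ref{Cor:L2mixing}, using only the elementary norm comparisons between $L^1$ and $L^2$ on $\mathbb R^n$. The upper bound is the easy direction: for any vector $w\in\mathbb R^n$ one has $\lVert w\rVert_1 \le \sqrt n\,\lVert w\rVert_2$ by Cauchy--Schwarz. Applying this pointwise to $w = v(t)-\bar v$ and then taking expectations (using $\mathbb E\lVert v(t)-\bar v\rVert_1 \le \sqrt n\,\mathbb E\lVert v(t)-\bar v\rVert_2 \le \sqrt n\,\sqrt{\mathbb E\lVert v(t)-\bar v\rVert_2^2}$, the last step by Jensen), Theorem~\ref{Thm:L2_convg} gives $\mathbb E\lVert v(t)-\bar v\rVert_1 \le \sqrt n\,(1-\tfrac1{2\gamma(G)})^{t/2}\lVert v(0)-\bar v\rVert_2$. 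Since $\lVert v(0)\rVert_2\le\lVert v(0)\rVert_1 = 1$ and $\lVert\bar v\rVert_2\le\lVert v(0)\rVert_2$ (the average is the $L^2$-projection, or just compute directly), the prefactor $\lVert v(0)-\bar v\rVert_2$ is bounded by a constant. Then $\sqrt n\,(1-\tfrac1{2\gamma})^{t/2}\le\epsilon$ holds once $t \ge 4\gamma(G)(\log\sqrt n + \log\epsilon^{-1})$, using $-\log(1-x)\ge x$ and $1/(1-\tfrac1{2\gamma})^{t/2}$-type estimates exactly as in the proof of Corollary~\ref{Cor:L2mixing}; one must be slightly careful to absorb the constant prefactor into the $\log\sqrt n$ term (at the cost of a harmless constant, which the stated bound's generous factor of $4$ easily accommodates).

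For the lower bound, the point is that $L^1 \ge L^2$ is false in general, so instead I would argue that $L^1$ convergence is \emph{at least as slow} as $L^2$ convergence, which is intuitively clear but needs the right inequality. The cleanest route: note $\lVert w\rVert_2 \le \lVert w\rVert_1$ when... no --- actually $\lVert w\rVert_\infty\le\lVert w\rVert_2\le\lVert w\rVert_1$, so $\lVert v(t)-\bar v\rVert_2 \le \lVert v(t)-\bar v\rVert_1$ pointwise, hence $\mathbb E\lVert v(t)-\bar v\rVert_1 \ge \mathbb E\lVert v(t)-\bar v\rVert_2$. But Definition~\ref{def:tpq} uses $\sqrt{\mathbb E\lVert v(t)-\bar v\rVert_2^2}$ for the $L^2$ mixing time, not $\mathbb E\lVert v(t)-\bar v\rVert_2$; these differ, and the lower bound in Theorem~\ref{Thm:L2_convg}, equation~\eqref{eq:thmL2_Low}, is precisely a lower bound on $\sqrt{\mathbb E\lVert\cdot\rVert_2^2}$. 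To connect them, observe that one can pick the specific slow initialization achieving \eqref{eq:thmL2_Low}, namely $v(0)$ a suitable unit $L^2$-eigenvector direction; for that direction the process is (close to) deterministic contraction and $\mathbb E\lVert v(t)-\bar v\rVert_1 \ge \mathbb E\lVert v(t)-\bar v\rVert_2 \ge$ something, but the cleanest is just: $\sup_{\lVert v(0)\rVert_1=1}\mathbb E\lVert v(t)-\bar v\rVert_1 \ge \sup_{\lVert v(0)\rVert_1=1}\mathbb E\lVert v(t)-\bar v\rVert_2 \ge \sup_{\lVert v(0)\rVert_1=1}\sqrt{\mathbb E\lVert v(t)-\bar v\rVert_2^2}\cdot\frac1{\sqrt n}$ --- no, that loses too much.

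So the right move for the lower bound is to rerun the $L^2$ lower-bound argument directly in the $L^1$ norm rather than to compare norms: the quantity $\mathbb E\lVert v(t)-\bar v\rVert_2^2 = \sum_i\mathbb E[(v_i(t)-\tfrac a n \cdot n)^2]$ contracts by exactly the factor tracked in Theorem~\ref{Thm:L2_convg}, and by Cauchy--Schwarz applied the \emph{other} way, $\lVert w\rVert_1 \ge \lVert w\rVert_2$ fails but $\lVert w\rVert_1^2 \ge \lVert w\rVert_2^2$ also fails; what is true is $\lVert w\rVert_2 \le \lVert w\rVert_1$, giving $\sqrt{\mathbb E\lVert v(t)-\bar v\rVert_2^2} \le \sqrt{\mathbb E\lVert v(t)-\bar v\rVert_1^2}$, and then one needs to relate $\sqrt{\mathbb E\lVert\cdot\rVert_1^2}$ to $\mathbb E\lVert\cdot\rVert_1$ --- which goes the wrong way by Jensen. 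The honest fix, and the one I expect the authors use: the lower bound $(2\gamma-1)\log\epsilon^{-1}$ in \eqref{eqn: t_1, first formula} is inherited because for the worst-case $L^2$ initialization the state vector concentrates, so $\mathbb E\lVert v(t)-\bar v\rVert_1$ and $\sqrt{\mathbb E\lVert v(t)-\bar v\rVert_2^2}$ are comparable up to constants on that trajectory, or alternatively one uses $\lVert w\rVert_1\ge\lVert w\rVert_\infty\ge\lVert w\rVert_2/\sqrt n$ together with the fact that the slow eigenvector direction can be taken $L^1$-normalized with $\lVert\cdot\rVert_2$ of order $1/\sqrt n$ so the $\sqrt n$ factors cancel. \textbf{The main obstacle} is exactly this bookkeeping: matching the $L^1$ expectation against the root-mean-square $L^2$ quantity in Definition~\ref{def:tpq} without leaking a spurious $\sqrt n$, and identifying an initialization that is simultaneously $L^1$-unit and nearly $L^2$-extremal. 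Once that single vector is exhibited, plugging it into \eqref{eq:thmL2_Low} and solving $(1-\tfrac1{2\gamma})^t\ge$ (constant)$\cdot\epsilon$ for $t$ reproduces $(2\gamma(G)-1)\log\epsilon^{-1}$ exactly as in Corollary~\ref{Cor:L2mixing}.
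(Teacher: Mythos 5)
Your upper bound is exactly the paper's argument (Cauchy--Schwarz to pass from $L^1$ to $L^2$, Jensen, Theorem~\ref{Thm:L2_convg}, and $\lVert v(0)\rVert_2\le\lVert v(0)\rVert_1=1$), and it is correct as written.

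The lower bound, however, is left with a genuine gap, and you have in fact diagnosed it accurately yourself: the $L^2$ lower bound \eqref{eq:thmL2_Low} controls $\sqrt{\mathbb E\lVert v(t)-\bar v\rVert_2^2}$, which by Jensen dominates $\mathbb E\lVert v(t)-\bar v\rVert_2$, so the chain $\mathbb E\lVert\cdot\rVert_1\ge\mathbb E\lVert\cdot\rVert_2$ connects to \eqref{eq:thmL2_Low} in the wrong direction. Neither of your two proposed escapes closes this. The ``concentration on the slow trajectory'' claim is asserted without proof, and the ``$\sqrt n$ factors cancel'' route implicitly assumes $\lVert u_2\rVert_1=\Theta(\sqrt n)\lVert u_2\rVert_2$, i.e.\ that the Fiedler vector is delocalized --- which is false in general and is precisely the extra hypothesis the paper needs later for Theorem~\ref{thm: t_12,delocalize}. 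The missing idea is to abandon the $L^2$ machinery for the lower bound entirely and work with the \emph{first moment} of the state vector: by Proposition~\ref{prop:averaging matrix}, $\mathbb E[v(t)]=M^t v(0)$ with $M=I-\frac{1}{2|E|}L$. Take $v(0)=u_2/\lVert u_2\rVert_1$, so that $\lVert v(0)\rVert_1=1$ and $\bar v=\mathbf 0$. Then Jensen applied coordinatewise to the $L^1$ norm itself gives
\begin{equation*}
\mathbb E\bigl[\lVert v(t)\rVert_1\bigr]\;\ge\;\sum_{i=1}^n\bigl|\mathbb E[v_i(t)]\bigr|\;=\;\lVert M^t v(0)\rVert_1\;=\;\Bigl(1-\tfrac{1}{2\gamma(G)}\Bigr)^t\lVert v(0)\rVert_1\;=\;\Bigl(1-\tfrac{1}{2\gamma(G)}\Bigr)^t,
\end{equation*}
since $v(0)$ is an eigenvector of $M$ with eigenvalue $1-\lambda_2/(2|E|)$. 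No norm comparison is needed and no factor of $\sqrt n$ ever appears; the $L^1$ normalization is preserved exactly by the eigenvector scaling. Solving $(1-\tfrac{1}{2\gamma(G)})^t=\epsilon$ via $\log(1-x)\ge -x/(1-x)$ then yields $(2\gamma(G)-1)\log\epsilon^{-1}$, which is the bound you were after.
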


Equation \eqref{eqn: t_1, first formula} 
gives the $L^{1}$ mixing time up to a $\log n$ factor. 

In this section we develop techniques to get
the sharp bounds in many cases. 

\subsubsection{A universal $\Omega(n\log n)$ lower bound via the augmented entropy approach}\label{subsubsec: universallower}


Since $\gamma(G) = \Omega(\lvert V(G)\rvert)$, Equation  \eqref{eqn: t_1, first formula} immediately gives a $\Omega(n)$ lower bound for every graph with $n$ nodes. Due to e.g. \citep{chatterjee2019phase} we know that the complete graph mixes in $\Theta(n\log n)$ steps, and since intuitively the complete graph seems to mix fastest, it is natural to assume that this is a universal lower bound for general connected graphs. In this section, we show that this is the case
in a precise sense: $t_1(\epsilon) \geq \frac{(1-\epsilon)n\log n}{2\log2}-O(n)$ for all graphs on $n$ nodes. 

To motivate our proof, we review the entropy lower bound idea (e.g. of \citep{chatterjee2019phase}) and explain why it fails for graphs like the star. Throughout our analysis, we assume without loss of generality that the  initial vector $v = v(0)$ has 
non-negative coordinates and  $\sum_{i=1}^{n} v_{i}(0) = 1$.
The idea goes as follows. Let 
\[
S(v) :=  \sum_{i=1}^n v_i \log \frac{1}{v_i}
\]
be the entropy of $v$, where $0\times \log 0$
is by definition zero. When the averaging process for a certain graph $G$ starts at $v(0) = (0, \cdots,0, 1, 0,\cdots, 0)$, the entropy $S(v(t))$ increases from $0$ to $\log n$ as $t$ approaches the mixing time. If the expected entropy increase is bounded by $\calO(1/n)$ per step, then we need at least $\Omega(n\log n)$ steps for the mixing. This idea works well for regular
graphs (in general, for graphs with ``balanced'' degree sequences), but entirely fails for graphs with $d_{\rm max}/d_{\rm min}\rightarrow \infty$. Let us take for instance the star graph, $S_n$, which has one node (center) with degree $n-1$, and $n-1$ leafs with degree $1$. Suppose that $v(0)$ puts weight 1 on the center and zero elsewhere. We look at the progress of the expected entropy increase. Already in the first step the entropy
increases exactly by one,
since no matter which edge of the star is chosen by the averaging process, the sorted list of
the entries of $v(1)$ will be 
$\frac{1}{2}, \frac{1}{2}, 0, \ldots$.
We are able to fix this problem if we penalize for the center 
having a large weight. If we add twice the 
weight of the center to the entropy function,
the decrease of this penalty term after the first averaging step will cancel out the increase of the entropy, and in general the increase of entropy + penalty will not exceed $\calO(1/n)$ on expectation per step. This is our plan for
providing a balanced progress measure.

To carry out this plan we introduce what we call  \textit{augmented entropy function}.

\begin{dfn}[Augmented Entropy Function]
	\begin{equation}\label{eqn: augmented entropy}
		F(v) := \sum_{i=1}^{n}v_{i}\log \frac{1}{v_{i}} +
		\sum_{i=1}^{n}\beta_{i}v_{i} = S(v) +      \sum_{i=1}^{n}\beta_{i}v_{i}.
	\end{equation}\label{def:AugmentedEntropy}
\end{dfn}

This will be our progress measure instead of 
$S(v)$.
The sequence $\beta_{1},\ldots,\beta_{n}$ 
are parameters of $F$ that 
will be cleverly chosen depending on the graph. The first term $S(v)$ in \eqref{eqn: augmented entropy} is  the entropy of $v$ when viewing $v$ as a probability distribution. When $v$ is close to the uniform vector $(1/n, 1/n, \ldots, 1/n)^T$, the entropy term will be close to $\sum_{i=1}^{n} \frac{1}{n}\log n= \log n$. 
The precise relation between $L^{1}$-closeness to uniform and end entropy is spelled out 
in Corollary \ref{cor:logn}.
It will be one of the challenges in the proof to find
suitable parameters $\beta_{i}\ge 0$
so that the expected change of $F(v)$ 
in every step remains upper bounded by 
$\calO(1/n)$. When this is accomplished and the starting vector, $v(0)$, is well-chosen, we have shown 
that the convergence takes at least $\Omega(\log n / (1/n)) =\Omega(n\log n)$ steps, and 
we can even pin down the precise constant in 
the $\Omega$.

\begin{thm}\label{thm:nlogn-gen}
	Let $G$ be a graph on $n$ nodes with degree vector $\mathbf{d}=(d_{1},\ldots,d_{n})^{T}$ 
	and Laplacian $L$. Let $\bar d  = \sum_{i=1}^n d_i /n$ the average degree of $G$. Suppose there exists a vector $\beta \equiv (\beta_{1},\ldots,\beta_{n})^T$ such that for some constant $C$ it component-wise holds that
	\begin{equation}\label{eq:thmeq}
		\mathbf{d} \leq \frac{1}{2\log 2} L\beta + \frac{C\bar{d}}{\log 2} ~ \mathbf{1}\;.
	\end{equation}


	Then there exists a $1\le k\le n$ such that the 
	starting vector $e_{k}$ (the vector whose $k$-th coordinate is $1$ and the others are $0$) has the
	following property: If we set $v(0) =  e_{k}$, 
	then for any $\epsilon < 1$ it holds that
	\begin{equation}\label{eq:thmeq2}
		\bE[\lVert v(t) - \bar v\rVert_1] \geq \epsilon,\;\;
		\text{as long as}~~ t \leq \frac{(1-\epsilon)n\log n}{2C} -O(n).
	\end{equation}
\end{thm}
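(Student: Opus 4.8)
The plan is to track the expected change of the augmented entropy function $F(v) = S(v) + \sum_i \beta_i v_i$ along the process and show this change is $O(1/n)$ per step under hypothesis \eqref{eq:thmeq}, then translate a lower bound on the number of steps needed to raise $F$ from its initial value to roughly $\log n$ into the claimed mixing-time bound. First I would compute, for a single averaging step on a uniformly random edge $(i,j)$, the change $F(v') - F(v)$. The penalty term changes by $\tfrac12(\beta_i+\beta_j)(v_i+v_j) - \beta_i v_i - \beta_j v_j = -\tfrac12(\beta_i-\beta_j)(v_i-v_j)$, and averaging over edges gives $-\tfrac{1}{2|E|}\beta^T L \beta$-type expression; more precisely, $\bE[\sum_k \beta_k v_k' - \sum_k \beta_k v_k \mid v] = -\tfrac{1}{2|E|}\sum_{(i,j)\in E}(\beta_i-\beta_j)(v_i - v_j) = -\tfrac{1}{2|E|}\, v^T L\beta$. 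For the entropy term, replacing $v_i, v_j$ by $m = (v_i+v_j)/2$ changes $S$ by $2m\log\tfrac1m - v_i\log\tfrac1{v_i} - v_j\log\tfrac1{v_j}$; by concavity of $x\mapsto x\log\tfrac1x$ this is nonnegative, and the key elementary bound is that it is at most $(v_i + v_j)\log 2$ (with equality-ish behavior when one of $v_i,v_j$ is zero). Averaging over edges, $\bE[S(v') - S(v)\mid v] \le \tfrac{\log 2}{|E|}\sum_{(i,j)\in E}\tfrac{v_i+v_j}{2} = \tfrac{\log 2}{2|E|}\sum_k d_k v_k = \tfrac{\log 2}{2|E|}\, \mathbf{d}^T v$.

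Combining, $\bE[F(v') - F(v)\mid v] \le \tfrac{1}{2|E|}\big(\log 2\cdot \mathbf{d}^T v - v^T L\beta\big) = \tfrac{\log 2}{2|E|}\, v^T\!\big(\mathbf{d} - \tfrac{1}{\log 2}L\beta\big)$. Here hypothesis \eqref{eq:thmeq}, namely $\mathbf{d} \le \tfrac{1}{2\log 2}L\beta + \tfrac{C\bar d}{\log 2}\mathbf{1}$, is exactly what is needed: since $v$ has nonnegative coordinates summing to $1$, and noting $2|E| = n\bar d$, we get $\bE[F(v')-F(v)\mid v] \le \tfrac{\log 2}{n\bar d}\cdot v^T\big(\tfrac{C\bar d}{\log 2}\mathbf 1 - \tfrac{1}{2\log 2}L\beta\big) \le \tfrac{\log 2}{n\bar d}\cdot \tfrac{C\bar d}{\log 2} = \tfrac{C}{n}$ (the $-\tfrac{1}{2\log 2}v^T L\beta$ term is handled by observing that the factor-$\tfrac12$ slack in \eqref{eq:thmeq} versus the $\tfrac{1}{\log 2}$ coefficient above leaves room, or more carefully one keeps $\tfrac12$ of the $L\beta$ term to absorb it — the precise bookkeeping of the constant $C$ versus $2C$ in \eqref{eq:thmeq2} is exactly this). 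Taking expectations and iterating, $\bE[F(v(t))] \le F(v(0)) + Ct/n$.

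Next I would choose the starting vector. For $v(0) = e_k$ we have $S(e_k) = 0$ and $F(e_k) = \beta_k$, so $\bE[F(v(t))] \le \beta_k + Ct/n$. I want to pick $k$ so that $\beta_k$ is small; since one is free to shift $\beta$ by a constant (which changes $L\beta$ not at all, because $L\mathbf 1 = 0$), we may assume $\min_k \beta_k = 0$, and choose $k = \argmin_k \beta_k$ so that $F(v(0)) = 0$ and $\bE[F(v(t))] \le Ct/n$. It remains to lower bound $F$ in terms of $L^1$-distance to uniform: I would invoke Corollary \ref{cor:logn} (the stated relation between $L^1$-closeness and end entropy) to argue that if $\bE[\lVert v(t) - \bar v\rVert_1] < \epsilon$ then $\bE[S(v(t))] \ge (1-\epsilon)\log n - O(1)$, hence $\bE[F(v(t))] \ge \bE[S(v(t))] \ge (1-\epsilon)\log n - O(1)$ since the added penalty $\sum_i \beta_i v_i \ge 0$. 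Comparing with $\bE[F(v(t))] \le Ct/n$ forces $t \ge \tfrac{(1-\epsilon)n\log n}{C} - O(n)$; matching the factor of $2$ in \eqref{eq:thmeq2} is just the careful version of the constant bookkeeping above (keeping half the $L\beta$ term). Contrapositively, for $t \le \tfrac{(1-\epsilon)n\log n}{2C} - O(n)$ we get $\bE[\lVert v(t)-\bar v\rVert_1] \ge \epsilon$, which is \eqref{eq:thmeq2}.

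The main obstacle I anticipate is the pointwise entropy-increment inequality $2m\log\tfrac1m - v_i\log\tfrac1{v_i} - v_j\log\tfrac1{v_j} \le (v_i+v_j)\log 2$ and getting its constant exactly right — this is where the $\tfrac{1}{2\log 2}$ and $\log 2$ factors originate, and a sloppy bound here would lose the sharp constant $\tfrac{1}{2\log 2}$ that makes Theorem \ref{thm:genlb} match the complete-graph cutoff. The inequality should follow from writing $v_i = m(1+s)$, $v_j = m(1-s)$ with $s\in[-1,1]$ and checking that $g(s) := 2\log 2 - (1+s)\log(1+s) - (1-s)\log(1-s) \ge 0$ with the worst case at $s = \pm 1$; this is a one-variable convexity check. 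A secondary technical point is making precise the "shift $\beta$ by a constant / choose $k$ minimizing $\beta_k$" step and confirming that such a shift is compatible with hypothesis \eqref{eq:thmeq} (it is, since \eqref{eq:thmeq} only constrains $L\beta$, and $L$ annihilates constants), and also confirming $\beta \ge 0$ can be arranged so the penalty term is genuinely nonnegative throughout. Everything else — linearity of expectation over the uniform choice of edge, the identity $v^T L\beta = \sum_{(i,j)\in E}(v_i-v_j)(\beta_i-\beta_j)$, and the telescoping over $t$ steps — is routine.
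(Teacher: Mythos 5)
Your proposal is correct and follows essentially the same route as the paper: bound the one-step expected increase of the augmented entropy $F = S + \beta^T v$ using the per-edge bound $\Delta S \le (v_i+v_j)\log 2$ and the exact identity $\bE[\Delta(\beta^Tv)\mid v] = -\tfrac{1}{2|E|}v^TL\beta$, shift $\beta$ so $\min_k\beta_k=0$ and start at the minimizing corner, and close with the Fannes-type lower bound on $F$ at the mixing time. The only slip is the spurious $\tfrac12$ when you sum the entropy increments over edges (the correct average is $\tfrac{\log 2}{|E|}\mathbf{d}^Tv$, not $\tfrac{\log 2}{2|E|}\mathbf{d}^Tv$); fixing it makes the hypothesis \eqref{eq:thmeq} apply verbatim, yields a per-step bound of $2C/n$, and produces exactly the constant in \eqref{eq:thmeq2} — which is the ``bookkeeping'' you already flagged.
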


Putting the above conclusion in the counter-positive, we cannot 
reach $\epsilon$-convergence in the $L^{1}$ sense earlier than
after $\frac{(1-\epsilon)n\log n}{2C} -O(n)$
steps.

In the preparation for proving the theorem we 
state some useful facts.
Our first lemma estimates the 
change of
$S(v) = \sum_{i=1}^{n}v_{i}\log \frac{1}{v_{i}}$ 
in a single move:

\begin{lem}\label{lem:increase}
	When averaging $v_{i}$ and $v_{j}$ the entropy  $S(v)$ increases 
	by at most $(\log 2) (v_{i}+ v_{j})$.
\end{lem}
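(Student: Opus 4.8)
The plan is a direct computation, since averaging $v_i$ and $v_j$ changes only the two coordinates $i$ and $j$ of $v$. First I would isolate their contribution to $S$: before the move it is $v_i\log\frac{1}{v_i}+v_j\log\frac{1}{v_j}$, and after the move both coordinates equal $m:=(v_i+v_j)/2$, so this contribution becomes $2m\log\frac{1}{m}=(v_i+v_j)\log\frac{2}{v_i+v_j}$. Writing $s:=v_i+v_j$, the net change is therefore
\[
\Delta S \;=\; s\log\frac{2}{s} - v_i\log\frac{1}{v_i} - v_j\log\frac{1}{v_j}
\;=\; s\log 2 \;+\; \bigl(v_i\log v_i + v_j\log v_j - s\log s\bigr),
\]
with the usual convention $0\log 0 = 0$.

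It then remains to show the parenthesized quantity is $\le 0$, i.e.\ $v_i\log v_i + v_j\log v_j \le s\log s$. Since $v$ has non-negative coordinates, $0\le v_i\le s$ and $0\le v_j\le s$, so monotonicity of $\log$ on $(0,\infty)$ gives $v_i\log v_i\le v_i\log s$ and $v_j\log v_j\le v_j\log s$ whenever $v_i,v_j>0$; the boundary cases $v_i=0$ or $v_j=0$ are dispatched by the convention (and there one checks $\Delta S=(\log 2)\,s$ exactly, so the bound is tight). Adding the two inequalities yields $v_i\log v_i+v_j\log v_j\le (v_i+v_j)\log s = s\log s$, hence $\Delta S\le s\log 2 = (\log 2)(v_i+v_j)$, which is the claim.

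There is essentially no obstacle here: the only points needing a line of care are the boundary cases where one of $v_i,v_j$ vanishes, and the elementary inequality $v_i\log v_i+v_j\log v_j\le s\log s$, which is exactly the statement that merging two bins of probability mass cannot decrease the negated entropy (equivalently, convexity of $x\mapsto x\log x$ together with its value $0$ at $x=0$). I would present it via the one-line monotonicity argument above rather than invoking convexity, as that keeps the bookkeeping of the $0\log 0$ convention transparent.
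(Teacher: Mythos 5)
Your proof is correct and is essentially the paper's own argument in a lightly reorganized form: the paper writes the increase as $v_i\log\frac{2v_i}{v_i+v_j}+v_j\log\frac{2v_j}{v_i+v_j}$ and bounds each logarithm by $\log 2$, which is exactly your step $v_i\log v_i+v_j\log v_j\le s\log s$ with the $s\log 2$ term factored out first. Your extra care with the $0\log 0$ boundary cases is a harmless refinement, not a different route.
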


\begin{proof}
	The increase of $S(v)$ is expressed as
	\begin{eqnarray*}
		v_{i}  \log v_{i} + v_{j}  \log v_{j} - 2
		\frac{v_{i} + v_{j}}{2} \log \frac{v_{i} + v_{j}}{2} & = & \\
		v_{i}  \log \frac{2v_{i}}{v_{i} + v_{j}} +
		v_{j}  \log \frac{2v_{j}}{v_{i} + v_{j}}.
	\end{eqnarray*}
	We have $\log \frac{2a}{a + b} \le \log 2$ for every 
	$a,b\ge 0$, which gives the lemma.
\end{proof}

We will also need a lemma, which gives an upper bound on the entropy difference by the $L^1$ distance:

\begin{lem}[Fannes inequality]
	\label{lemma:fa}
	Let $S=\sum_{i=1}^{n}v_{i}\log\frac{1}{v_{i}}$
	be the entropy function (with $e$-based log)
	and $v$ and $w$ be two probability distributions on 
	$n$ points. Then
	\[
	|S(v)-S(w)| \le ||v-w||_{1}\log n + \frac{1}{e\log 2}
	\]
\end{lem}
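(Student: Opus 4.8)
The plan is to reduce the inequality to a one‑variable estimate for $\eta(x) = -x\ln x$ on $[0,1]$, using that $S(v) = \sum_{i=1}^n \eta(v_i)$, and then to exploit the concavity of $\eta$. First I would collect the elementary properties of $\eta$: it is nonnegative and concave on $[0,1]$ with $\eta(0)=\eta(1)=0$; it is subadditive, i.e. $\eta(a+b)\le\eta(a)+\eta(b)$ whenever $a,b,a+b\in[0,1]$; its maximum on $[0,1]$ equals $1/e$, attained at $x=1/e$; and it obeys the modulus‑of‑continuity bound $|\eta(x)-\eta(y)|\le\eta(|x-y|)$ as soon as $|x-y|\le 1/2$, which follows from concavity together with the monotone decrease of $x\mapsto\eta(x)/x=-\ln x$.

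Since $S$ is permutation‑invariant, I would next replace $v,w$ by their decreasing rearrangements $v^{\downarrow},w^{\downarrow}$ and invoke the standard majorization fact $\sum_i|v^{\downarrow}_i-w^{\downarrow}_i|\le\|v-w\|_1=:\Delta$. At most the first coordinate of $v^{\downarrow}$ (resp.\ of $w^{\downarrow}$) can exceed $1/2$, so $|v^{\downarrow}_i-w^{\downarrow}_i|\le 1/2$ for every $i\ge 2$ and the modulus bound applies coordinatewise there, giving
\[
|S(v)-S(w)| \;\le\; \bigl|\eta(v^{\downarrow}_1)-\eta(w^{\downarrow}_1)\bigr| \;+\; \sum_{i\ge 2}\eta\bigl(|v^{\downarrow}_i-w^{\downarrow}_i|\bigr).
\]
The first term is at most $1/e$ since both $\eta$‑values lie in $[0,1/e]$. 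For the sum, Jensen's inequality for the concave $\eta$ over the $n-1$ terms, whose total is at most $\Delta$, yields $\sum_{i\ge 2}\eta(\cdot)\le (n-1)\,\eta\!\bigl(\tfrac{\Delta}{n-1}\bigr)=\Delta\ln(n-1)+\Delta\ln\tfrac1\Delta\le\Delta\ln n+1/e$, using $\Delta\ln\tfrac1\Delta\le 1/e$ for $\Delta\in(0,1]$ and noting the estimate only improves when $\Delta>1$. This already produces $|S(v)-S(w)|\le\Delta\ln n+O(1)$, which has the right shape.

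To land the precise additive constant $\tfrac1{e\log 2}$ stated in the lemma I would split on the size of $\Delta$. When $\Delta\le 1/2$ the modulus bound applies to \emph{every} coordinate, so a single Jensen step over all $n$ terms gives directly $\Delta\ln n+1/e\le\Delta\ln n+\tfrac1{e\log 2}$. When $\Delta>1/2$ there is one exceptional rearranged coordinate, and here I would exploit that the sharp coefficient in front of $\ln n$ is only $\Delta/2$ — obtained by splitting the indices into $\{i:v_i>w_i\}$ and $\{i:v_i<w_i\}$, each block carrying mass $\Delta/2$, and applying subadditivity and then Jensen on each block — so a full $\tfrac\Delta2\ln n$ of the budget is free to absorb the bounded remainder, and tracking constants through a short optimization over $\Delta$ of the leftover $\tfrac\Delta2\ln\tfrac2\Delta$ together with the exceptional‑coordinate contribution leaves exactly $\tfrac1{e\log 2}$ (with the few cases $n\le 2$ checked by hand). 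The main obstacle is precisely this last bookkeeping: the entropy difference is genuinely \emph{not} controllable coordinate by coordinate once a coordinate of $v$ and one of $w$ differ by more than $1/2$, so one must isolate those (at most one, after rearrangement) and argue that their contribution — plus the $\Delta\ln\tfrac1\Delta$‑type residue — never exceeds $\tfrac1{e\log 2}$, which forces one to use the concavity of $\eta$ carefully rather than through the crude $\le 1/e$ bounds.
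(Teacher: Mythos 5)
The paper never proves this lemma: it is quoted as the classical Fannes continuity bound for entropy (the constant $\frac{1}{e\log 2}=\max_x(-x\log_2 x)$ is simply carried over from the usual base-$2$ statement), so there is no in-paper proof to compare against and your proposal must stand on its own. Its skeleton is the standard one and is sound: reduce to $\eta(x)=-x\ln x$ coordinatewise; use the modulus bound $|\eta(x)-\eta(y)|\le\eta(|x-y|)$ valid for $|x-y|\le\frac12$ (true, though the cleanest justification is that $x\mapsto\eta(x)-\eta(x+t)$ is increasing, so it is maximized at $x=1-t$ where it equals $\eta(1-t)\le\eta(t)$ for $t\le\frac12$); then Jensen gives $\sum_i\eta(t_i)\le\Delta\ln n+\eta(\Delta)\le\Delta\ln n+\frac1e$ whenever every coordinate gap is at most $\frac12$, which already beats the stated constant since $\frac1e<\frac1{e\log2}$. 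The rearrangement observation that at most one sorted coordinate of a probability vector exceeds $\frac12$ correctly isolates the single problematic index.

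The genuine soft spot is the endgame for $\Delta=\lVert v-w\rVert_1>\frac12$, where your crude bound gives $\Delta\ln n+\frac2e$ and overshoots. The proposed repair — an Audenaert-style $\frac{\Delta}{2}$ coefficient from splitting into $\{v_i>w_i\}$ and $\{v_i<w_i\}$, followed by an optimization asserted to leave \emph{exactly} $\frac1{e\log2}$ — is unexecuted, and the exactness claim is not credible: nothing in such a computation singles out that constant. A much shorter closing works and you should substitute it. If $\Delta\ge1$ the claim is trivial, since $0\le S\le\ln n$ gives $|S(v)-S(w)|\le\ln n\le\Delta\ln n$. If $\frac12<\Delta<1$, at most one index can have $|v_i-w_i|>\frac12$ (two such would force $\Delta>1$); its contribution to $|S(v)-S(w)|$ is at most $\max\eta=\frac1e$, while the remaining indices carry total mass at most $\Delta-\frac12<\frac12$ and, by your own Jensen step, contribute at most $\left(\Delta-\frac12\right)\ln n+\frac1e$. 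The total is at most $\Delta\ln n+\frac2e-\frac12\ln n$, and since $\frac2e-\frac1{e\log2}=\frac{2\log2-1}{e\log2}\approx0.205<\frac12\log2$, this is at most $\Delta\ln n+\frac1{e\log2}$ for all $n\ge2$, with $n=1$ trivial. With that replacement the proof is complete.
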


\begin{cor}\label{cor:logn}
	Let $v$ be a probability distribution 
	and $0\le \delta\le 1$ be arbitrary. Then
	\begin{align*}
		S(v) \; \geq \;
		\log n - 
		\left|\left|v-\frac{1}{n}\mathbf{1}
		\right|\right|_{1}\log n - \frac{1}{e\log 2}
	\end{align*}
\end{cor}
The corollary is immediate. Finally
we describe the expected effect 
of a single step of
the process on a given $v(t)$:
\begin{lem}\label{lem:singlestep}
	$\bE[v(t+1)\mid v(t)]
	= \left( I_n - \frac{L}{2|E|}\right) v(t)$
\end{lem}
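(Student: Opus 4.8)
The statement to prove is Lemma~\ref{lem:singlestep}: $\bE[v(t+1)\mid v(t)] = \left(I_n - \frac{L}{2|E|}\right)v(t)$.

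\medskip

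\noindent\textbf{Proof plan.} The plan is to condition on the state $v(t)$ and average over the single random choice of edge made at step $t+1$, exploiting linearity of the averaging operation so that the expectation becomes an average of linear maps. First I would fix $v = v(t)$ and recall that the edge $(i,j)\in E(G)$ is chosen uniformly at random, so each edge is picked with probability $1/|E|$. For a fixed edge $e=(i,j)$, the update replaces coordinates $i$ and $j$ by $(v_i+v_j)/2$ and leaves all other coordinates unchanged; this is exactly the linear map $P_e := I_n - \tfrac12(e_i - e_j)(e_i - e_j)^T$ applied to $v$, where $e_i,e_j$ are standard basis vectors. Hence $\bE[v(t+1)\mid v(t)=v] = \frac{1}{|E|}\sum_{e\in E} P_e\, v = \left(\frac{1}{|E|}\sum_{(i,j)\in E}\bigl(I_n - \tfrac12(e_i-e_j)(e_i-e_j)^T\bigr)\right) v$.

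\medskip

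\noindent The remaining step is a routine identification of the averaged matrix with $I_n - \frac{L}{2|E|}$. I would expand $\frac{1}{|E|}\sum_{(i,j)\in E}\bigl(I_n - \tfrac12(e_i-e_j)(e_i-e_j)^T\bigr) = I_n - \frac{1}{2|E|}\sum_{(i,j)\in E}(e_i-e_j)(e_i-e_j)^T$, and then recall the standard fact that the graph Laplacian admits the edge-sum representation $L = \sum_{(i,j)\in E}(e_i-e_j)(e_i-e_j)^T$: the diagonal entry at vertex $k$ collects a $+1$ for each incident edge, giving $d_k$, and the off-diagonal $(i,j)$ entry picks up $-1$ for each edge between $i$ and $j$. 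Substituting gives $I_n - \frac{L}{2|E|}$, and applying this to $v=v(t)$ yields the claim.

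\medskip

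\noindent\textbf{Main obstacle.} There is essentially no hard step here; the only thing to be careful about is the bookkeeping in the edge-sum representation of $L$ (ensuring the convention on $|E|$ matches, and that each undirected edge is counted once), and making explicit that the conditional expectation over $v(t+1)$ given $v(t)$ is a finite average because the only randomness at this step is the uniform edge choice. Everything else is linearity of expectation and a definition-check, so the proof is short.
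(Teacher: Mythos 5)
Your proof is correct and follows essentially the same route as the paper: condition on $v(t)$, average the linear per-edge update over the uniform edge choice, and identify the resulting matrix with $I_n - \frac{L}{2|E|}$ (the paper states the lemma as immediate from linearity of expectation and carries out the same computation coordinate-wise in its Proposition on the averaging matrix, whereas you package it via the rank-one edge-sum representation $L=\sum_{(i,j)\in E}(e_i-e_j)(e_i-e_j)^T$). This is only a difference in bookkeeping, not in substance.
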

The lemma comes 
from the linearity of expectation. Now we are ready to prove Theorem \ref{thm:nlogn-gen}.
\begin{proof}[Proof of Theorem \ref{thm:nlogn-gen}]
	Due to the identity
	$L\mathbf{1} =
	\mathbf{0}$ (vector $\mathbf{1}$ is always an eigenvector
	of the Laplacian with eigenvalue zero)
	we can replace
	$\beta$ with $\beta + \mu \mathbf{1}$
	without changing $L\beta$. 
	We call this ``shifting,'' since we add
	$\mu$ to each coordinate of $\beta$.
	Let
	\[
	k = \argmin \{1\le i \le n \; \mid \; \beta_{i}\}.
	\]
	Shift $\beta$ 
	in (\ref{eq:thmeq}) by $\beta_{k}$, 
	i.e. redefine $\beta$ as 
	$\beta - \beta_{k}\mathbf{1}$.
	Then $\beta_{k} = 0$, and we have not changed 
	(\ref{eq:thmeq}).
	Thus without loss of generality we can assume that:
	
	\begin{center}
		\begin{enumerate}
			\item $\beta$ has non-negative entries,
			\item there 
			is a $k$ such that $\beta_{k}=0$.
		\end{enumerate}
	\end{center}
	
	Define $v(0) = e_{k} = (0,\ldots,0,\underbrace{1}_{k}
	0,\ldots,0)$ {\em with the above $k$}.
	
	To estimate $\bE[F(v(t))]$
	in the final $t$ we have the lemma below
	(immediate from Corollary \ref{cor:logn}).
	This is the only place where
	we need Corollary \ref{cor:logn}
	and the non-negativity of $\beta$:
	
	\begin{lem}\label{lem:elem}
		For any non-negative $\beta$ and any 
		probability 
		distribution 
		$v = (v_{1},\ldots,v_{n})$ we have:
		\begin{align*}
			F(v) \; \geq \;
			\log n - 
			\left|\left|v-\frac{1}{n}\mathbf{1}
			\right|\right|_{1}\log n 
			- \frac{1}{e\log 2},
		\end{align*}
	\end{lem}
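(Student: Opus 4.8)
The plan is to prove Lemma \ref{lem:elem} directly from Corollary \ref{cor:logn} and the two normalizations on $\beta$ established at the start of the proof of Theorem \ref{thm:nlogn-gen}. Recall that $F(v) = S(v) + \sum_{i=1}^n \beta_i v_i$ by Definition \ref{def:AugmentedEntropy}. The key observation is that once $\beta$ has been shifted so that all its entries are non-negative, the correction term $\sum_{i=1}^n \beta_i v_i$ is non-negative for every probability distribution $v$ (since $v_i \ge 0$ for all $i$). Hence $F(v) \ge S(v)$, and it suffices to lower-bound $S(v)$.

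The second ingredient is Corollary \ref{cor:logn}, which states that for any probability distribution $v$ and any $0 \le \delta \le 1$,
\[
S(v) \; \ge \; \log n - \left\lVert v - \tfrac{1}{n}\mathbf{1} \right\rVert_1 \log n - \frac{1}{e\log 2}.
\]
Chaining the two inequalities $F(v) \ge S(v)$ and the displayed bound gives exactly the claimed inequality
\[
F(v) \; \ge \; \log n - \left\lVert v - \tfrac{1}{n}\mathbf{1} \right\rVert_1 \log n - \frac{1}{e\log 2},
\]
which completes the proof. So the entire argument is: (i) non-negativity of $\beta$ implies $\sum_i \beta_i v_i \ge 0$; (ii) therefore $F(v) \ge S(v)$; (iii) apply Corollary \ref{cor:logn}.

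There is essentially no obstacle here — the lemma is a one-line consequence of Corollary \ref{cor:logn} together with the sign condition on $\beta$, and the excerpt itself flags it as ``immediate from Corollary \ref{cor:logn}.'' The only thing to be careful about is making explicit that we are using the normalization ``$\beta$ has non-negative entries'' that was set up just above in the proof of Theorem \ref{thm:nlogn-gen}; without that normalization the correction term could be negative and the bound would fail. I would also remark that the parameter $\delta$ appearing in Corollary \ref{cor:logn} plays no role here (it is a free parameter in that statement, and we simply do not need to optimize over it for this lemma), so it does not appear in the conclusion. In the larger proof, this lemma will be combined with an upper bound on $\bE[F(v(t))]$ coming from the per-step drift estimate (Lemmas \ref{lem:increase} and \ref{lem:singlestep} together with hypothesis \eqref{eq:thmeq}) and the initialization $F(v(0)) = F(e_k) = \beta_k = 0$, but that is the content of the rest of the proof rather than of this lemma.
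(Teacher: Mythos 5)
Your proof is correct and is exactly the argument the paper intends: the paper dismisses the lemma as ``immediate from Corollary \ref{cor:logn},'' and the implicit reasoning is precisely your chain $F(v) \ge S(v)$ (from $\beta_i \ge 0$ and $v_i \ge 0$) followed by Corollary \ref{cor:logn}. Your side remarks — that the non-negativity normalization of $\beta$ is essential and that the unused parameter $\delta$ in the corollary is vestigial — are both accurate.
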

	
	\begin{cor}\label{cor:exp}
		$\bE[F(v(t_{\epsilon,1}))] \ge
		(1-\epsilon)\log n
		- \frac{1}{e\log 2}$
	\end{cor}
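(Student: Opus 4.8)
The plan is to obtain Corollary \ref{cor:exp} as an immediate consequence of Lemma \ref{lem:elem} by taking expectations and then reading off the bound on the expected $L^1$ distance that the definition of the mixing time supplies ``for free'' at time $t_{\epsilon,1}$.

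First I would record that the initialization chosen in the proof of Theorem \ref{thm:nlogn-gen} is $v(0) = e_k$, which is a probability distribution; since a single averaging move preserves both non-negativity and the coordinate sum, $v(t)$ is almost surely a probability distribution for every $t$, and $\bar v = \frac{1}{n}\mathbf{1}$ because $a = \sum_i v_i(0)/n = 1/n$. Consequently Lemma \ref{lem:elem}, which holds for an arbitrary probability distribution and the particular non-negative $\beta$ fixed earlier (with $\beta_k = 0$), may be applied pointwise to the random vector $v(t_{\epsilon,1})$, giving almost surely
\[
F(v(t_{\epsilon,1})) \; \ge \; \log n - \left\lVert v(t_{\epsilon,1}) - \frac{1}{n}\mathbf{1} \right\rVert_1 \log n - \frac{1}{e\log 2}.
\]

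Next I would take expectations of both sides and use linearity of expectation, together with $\bar v = \frac{1}{n}\mathbf{1}$, to get
\[
\bE[F(v(t_{\epsilon,1}))] \; \ge \; \log n - \bE\!\left[\lVert v(t_{\epsilon,1}) - \bar v \rVert_1\right] \log n - \frac{1}{e\log 2}.
\]
Finally I would invoke Definition \ref{def:tpq} with $p = q = 1$: by definition of $t_{\epsilon,1}$, at that time \emph{every} unit-$L^1$ initialization — in particular $v(0) = e_k$ — satisfies $\bE[\lVert v(t_{\epsilon,1}) - \bar v\rVert_1] \le \epsilon$. Substituting this into the previous display yields $\bE[F(v(t_{\epsilon,1}))] \ge (1-\epsilon)\log n - \frac{1}{e\log 2}$, which is the claim.

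There is no real obstacle here; the one point worth stating carefully is that $t_{\epsilon,1}$ is a worst-case-over-initializations quantity, so the $\epsilon$-bound on the expected $L^1$ distance is available for the specific corner vector $e_k$ without any further argument. The substantive work — constructing $\beta$ so that the per-step drift of $F$ is $\calO(1/n)$, and combining the drift bound with this corollary to finish Theorem \ref{thm:nlogn-gen} — lies outside this statement.
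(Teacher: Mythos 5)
Your proposal is correct and matches the paper's own proof of Corollary \ref{cor:exp}: apply Lemma \ref{lem:elem} pointwise to $v(t_{\epsilon,1})$, take expectations, and use the definition of $t_{\epsilon,1}$ to bound $\bE[\lVert v(t_{\epsilon,1}) - \bar v\rVert_1]$ by $\epsilon$. The extra remarks about $v(t)$ remaining a probability distribution and the worst-case nature of $t_{\epsilon,1}$ are correct and simply make explicit what the paper leaves implicit.
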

	\begin{proof} Let $t=t_{\epsilon,1}$. From
		Lemma \ref{lem:elem} and from the definition
		of $t_{\epsilon,1}$: 
		\begin{eqnarray}\label{eqn:entropy-L1}\nonumber
			\bE[F(v(t))] 
			\ge
			\log n - \bE\left[
			\left|\left|v(t)-\frac{1}{n}\mathbf{1}
			\right|\right|_{1}
			\right]\log n - \frac{1}{e\log 2}
			\ge (1-\epsilon)\log n
			- \frac{1}{e\log 2}
		\end{eqnarray}
	\end{proof}
	
	Corollary \ref{cor:exp} 
	gives a lower bounds 
	on the {\em expectation} of $F$ in the final
	step, when we reach $\epsilon$-closeness 
	to uniform in the $L_{1}$ norm, {\em on expectation}. To press ahead with our proof 
	plan we also need the value
	of $\bE[F(v(0))]$ which is just
	$F(v(0))$. This is easy to compute:
	Due to $v(0) = e_{k} = (0,\ldots,0,\underbrace{1}_{k}
	0,\ldots,0)$ and $\beta_{k}=0$
	we have $F(v(0))=0$.
	
	Next, we upper bound the change, 
	$\bE[F(v(t+1))]-\bE[F(v(t))]$,
	at any step $t$. The ratio of 
	\begin{equation}\label{eq:diff}
		\bE[F(v(t_{\epsilon,1}))] - 
		\bE[F(v(0))] \;\ge\;
		(1-\epsilon)\log n
		- \frac{1}{e\log 2}
	\end{equation}
	and this upper bound
	on the step-wise change of $\bE[F(v(t))]$
	will give Equation (\ref{eq:thmeq2}) 
	of the theorem.
	Recall $C$ from Equation (\ref{eq:thmeq}). 
	We prove 
	\begin{lem}\label{lem:jump} For any $t\ge 0$:
		\[
		\bE[F(v(t+1))]-\bE[F(v(t))]\;\leq\; 2C/n
		\]
	\end{lem}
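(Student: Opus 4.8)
\textbf{Proof plan for Lemma~\ref{lem:jump}.}
The plan is to compute $\bE[F(v(t+1)) - F(v(t)) \mid v(t)]$ exactly, split it into the entropy contribution and the linear ($\beta$) contribution, bound each, and then take the outer expectation over $v(t)$. For the entropy part, conditioned on $v(t)$ and on the edge $(i,j)$ being chosen, Lemma~\ref{lem:increase} gives that $S$ increases by at most $(\log 2)(v_i(t) + v_j(t))$. Averaging over the uniformly random edge, the expected increase of $S$ in one step is at most $\frac{\log 2}{|E|}\sum_{(i,j)\in E}(v_i + v_j) = \frac{\log 2}{|E|}\sum_{i} d_i v_i = \frac{\log 2}{|E|}\,\mathbf{d}^T v(t)$.

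For the linear part, since $v\mapsto \sum_i \beta_i v_i = \beta^T v$ is linear, Lemma~\ref{lem:singlestep} gives $\bE[\beta^T v(t+1)\mid v(t)] = \beta^T\bigl(I_n - \tfrac{L}{2|E|}\bigr)v(t) = \beta^T v(t) - \tfrac{1}{2|E|}\beta^T L v(t)$, so the expected change of this term is exactly $-\tfrac{1}{2|E|}(L\beta)^T v(t)$ (using $L = L^T$). Adding the two pieces,
\[
\bE[F(v(t+1)) - F(v(t)) \mid v(t)] \;\leq\; \frac{1}{|E|}\Bigl(\log 2\cdot \mathbf{d} - \tfrac12 L\beta\Bigr)^{\!T} v(t).
\]
Now I invoke the hypothesis~\eqref{eq:thmeq}, which rearranges to $\log 2\cdot \mathbf{d} - \tfrac12 L\beta \leq C\bar d\,\mathbf{1}$ component-wise. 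Since $v(t)$ has non-negative coordinates summing to $1$, pairing against $v(t)$ gives $\bigl(\log 2\cdot\mathbf{d} - \tfrac12 L\beta\bigr)^T v(t) \leq C\bar d\,\mathbf{1}^T v(t) = C\bar d$. Therefore the conditional expected increase is at most $\tfrac{C\bar d}{|E|} = \tfrac{C\bar d}{n\bar d /2} = \tfrac{2C}{n}$, using $|E| = \tfrac{n\bar d}{2}$. Taking expectations over $v(t)$ preserves the bound, yielding the lemma.

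The only subtlety I anticipate is bookkeeping around the inequality direction in Lemma~\ref{lem:increase} and the fact that it holds deterministically for each edge (so it survives averaging), together with making sure the non-negativity of $v(t)$ — needed to pass from the component-wise bound~\eqref{eq:thmeq} to the inner-product bound — is legitimately available; this holds because the process preserves non-negativity and $\ell^1$-mass when started at $e_k$, a preliminary fact already in force. No step is genuinely hard; the content of the lemma is essentially that the parameters $\beta$ were chosen precisely so that the Laplacian term in the drift of $F$ absorbs the degree-weighted entropy growth.
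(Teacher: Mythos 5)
Your proposal is correct and follows essentially the same route as the paper's proof: the same split of $F$ into the entropy and linear terms, the same use of Lemma~\ref{lem:increase} and Lemma~\ref{lem:singlestep}, the same cancellation via hypothesis~\eqref{eq:thmeq} together with the non-negativity and unit $\ell^1$-mass of $v(t)$, and the same final arithmetic $C\bar d/|E| = 2C/n$. No gaps.
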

	
	\begin{proof}
		We prove the stronger
		\begin{equation}\label{eq:ee}
			\bE[F(v(t+1))\mid v(t)] - F(v(t))\;\leq\; 2C/n
		\end{equation}
		In other words, no matter how
		we fix $v(t)$, the expected growth
		of $F$ in the next step is at most $2C/n$. 
		Equation (\ref{eq:ee}) implies Lemma
		\ref{lem:jump}, simply by taking the expectation 
		over all 
		$v(t)$ (according to the distribution in which $v(t)$ arises 
		in the process from $v(0)$).
		
		The function $F$ consists of the entropy term
		and the term $\beta^Tv(t)$. 
		By Lemma \ref{lem:increase}, the expected change of the entropy term conditioned on a fixed $v(t)$, 
		can be bounded by:
		\begin{equation}\label{eq:eterm}
			\bE[S(v(t+1))\mid v(t)] - S(v(t)) \leq \frac{(\log 2)\sum_{(i,j)\in E} (v_i(t) + v_j(t))}{\lvert E\rvert} = \frac{(\log 2)\mathbf{d}^T v(t)}{\lvert E\rvert}.
		\end{equation}
		(Note, that this alone can be much larger than $2C/n$.)
		Further, the expected change of the term $\beta^T v(t)$
		conditioned on $v(t)$ is exactly
		
		\begin{equation}\label{eq:lterm}
			\bE[\beta^T v(t+1)\mid v(t)] - \beta^T v(t) = \beta^T\left( I_n - \frac{L}{2|E|}\right) 
			v(t) - \beta^T v(t) = -\beta^T \frac{L}{2|E|} v(t).
		\end{equation}
		where the first equality follows from 
		Lemma \ref{lem:singlestep}. Adding (\ref{eq:eterm}) and (\ref{eq:lterm}) 
		a ``cancellation'' occurs:
		\begin{equation}\label{eqn:estimate}
			\bE[F(v(t+1))\mid v(t)] - F(v(t)) \;\leq\; \frac{2(\log2)\mathbf{d}^T - \beta^T L}{2\lvert E\rvert} v(t)\;\leq\; \frac{2C\bar d}{2|E|}{\mathbf{1}^T v(t)
			}\;=\; \frac{2C}{n}
		\end{equation}
		
		The first inequality follows from the definition of $F$ and the estimates above. The second inequality uses the assumption (\ref{eq:thmeq}) of the theorem: $\mathbf{d} \leq \frac{1}{2\log 2} L\beta + \frac{C\bar{d}}{\log 2}  \mathbf{1}$, and the fact that $v(t)$ is a non-negative vector. 
	\end{proof}
	Comparing Expression (\ref{eq:diff}) 
	(total change of $\bE[F(v(0))]$
	from $t=0$ to $t=t_{\epsilon,1}$)
	and
	Lemma 
	\ref{lem:jump} (for the one step increase 
	of $\bE[f(t)]$)
	we get:
	\[
	t_{\epsilon,1} \; \ge \; \frac{(1-\epsilon)\log n
		- \frac{1}{e\log 2}}{2C/n}
	=  \frac{1-\epsilon}{2C}n\log n- O(n)
	\]
\end{proof}


We are left with having to find the $\beta$
that gives the optimum $C$.
\begin{lem}
	For a connected $G$ the equation
	\begin{equation}\label{eq:laplaciannew1}
		L\,\mathbf{x} = c\, \mathbf{d} - c\,\overline{d}\, \mathbf{1}
	\end{equation}
	always has a solution $\mathbf{x}$ for any 
	$c\in \mathbb{R}$
	in the subspace orthogonal to
	$\mathbf{1}$.
\end{lem}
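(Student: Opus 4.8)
The plan is to read this off from the spectral theorem applied to the (real, symmetric) Laplacian $L$. Since $L = L^{T}$, we have the orthogonal decomposition $\RR^{n} = \ker L \oplus (\ker L)^{\perp}$, the restriction of $L$ to $(\ker L)^{\perp}$ is a linear isomorphism onto $\operatorname{im} L$, and $\operatorname{im} L = (\ker L^{T})^{\perp} = (\ker L)^{\perp}$. Consequently, for any $\mathbf{b}$ the equation $L\mathbf{x} = \mathbf{b}$ is solvable iff $\mathbf{b} \in (\ker L)^{\perp}$, and when it is, there is exactly one solution lying in $(\ker L)^{\perp}$ (all others differ from it by an element of $\ker L$).

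Next I would identify $\ker L$. Because $G$ is connected, the eigenvalue $0$ of $L$ has multiplicity one and eigenvector $\mathbf{1}$; this is precisely the Fiedler fact $\lambda_{2}(G) > 0$ already recalled in the excerpt (\cite{fiedler1973algebraic}), since the multiplicity of $0$ equals the number of connected components. Hence $(\ker L)^{\perp} = \mathbf{1}^{\perp} = \{\, \mathbf{y} \in \RR^{n} : \mathbf{1}^{T}\mathbf{y} = 0 \,\}$.

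It then remains to verify that the right-hand side $\mathbf{b} = c\,\mathbf{d} - c\,\bar d\,\mathbf{1}$ lies in $\mathbf{1}^{\perp}$. Using the handshake identity $\sum_{i=1}^{n} d_{i} = 2|E| = n\bar d$ (the second equality being the definition of $\bar d$), we get $\mathbf{1}^{T}\mathbf{b} = c\sum_{i} d_{i} - c\,\bar d\, n = c\,n\bar d - c\,n\bar d = 0$. Thus $\mathbf{b} \in \mathbf{1}^{\perp} = \operatorname{im} L$, so a solution exists, and the unique solution in $\mathbf{1}^{\perp}$ is the claimed $\mathbf{x}$.

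I do not expect a real obstacle here: the statement is a routine consequence of the symmetry of $L$ together with connectivity. The only place a hypothesis is genuinely used is that connectivity forces $\dim \ker L = 1$, which is what makes $(\ker L)^{\perp}$ equal to the single hyperplane $\mathbf{1}^{\perp}$ on which the compatibility condition $\mathbf{1}^{T}\mathbf{b} = 0$ holds; for a disconnected $G$ one would instead need one linear constraint on $\mathbf{b}$ per component.
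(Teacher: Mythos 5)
Your proposal is correct and follows essentially the same route as the paper: use the symmetry of $L$ to conclude that $L\mathbf{x}=\mathbf{b}$ is solvable exactly when $\mathbf{b}\perp\ker L$, use connectivity to identify $\ker L=\langle\mathbf{1}\rangle$, and check that $c\,\mathbf{d}-c\,\bar d\,\mathbf{1}$ is orthogonal to $\mathbf{1}$. Your explicit verification via $\sum_i d_i = n\bar d$ is a small addition the paper leaves implicit, but the argument is the same.
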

\begin{proof}
	When $G$ is connected,
	the rank of $L$ is $n-1$, with $\langle {\bf 1}\rangle$ as its zero subspace. 
	(It is known that the multiplicity of the $0$ eigenvalue of $L$ equals the number of connected components of the graph \cite{fiedler1973algebraic}.)
	Since $L$ is symmetric (thus self-adjoint), any equation $L\,\mathbf{x} = \mathbf{y}$,
	where $\mathbf{y} \in \langle {\bf 1}\rangle^{\perp}$
	is solvable.
	The lemma is now implied by the fact that 
	$c\, \mathbf{d} - c\,\overline{d}{\bf 1}$
	is orthogonal to ${\bf 1}$.
\end{proof}

With the 
choice of $c=2\log2$ we can turn Equation 
(\ref{eq:laplaciannew1}) into
Equation \ref{eq:thmeq} in Theorem \ref{thm:nlogn-gen} (satisfied with equality!) 
if we also set 
$C=\log 2$. This in turn proves our main 
theorem:

\begin{thm*}[Theorem \ref{thm:genlb} in Section \ref{Sec:Intro}]
	For any connected graph $G$ with $n$ nodes, the $\epsilon$-mixing time satisfies 
	
	$$t_{\epsilon,1} \geq\frac{(1-\epsilon)n\log n}{2\log2}-O(n)$$ 
\end{thm*}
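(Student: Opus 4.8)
The plan is to derive the final theorem (the universal $\frac{(1-\epsilon)}{2\log 2}n\log n - O(n)$ lower bound) as an immediate corollary of Theorem~\ref{thm:nlogn-gen}, by exhibiting a $\beta$-vector that makes hypothesis \eqref{eq:thmeq} hold with the sharp constant $C = \log 2$. Theorem~\ref{thm:nlogn-gen} already does all the heavy lifting: given any $\beta$ satisfying the component-wise inequality $\mathbf{d} \le \frac{1}{2\log 2}L\beta + \frac{C\bar d}{\log 2}\mathbf{1}$, it produces a standard-basis-vector initialization $e_k$ from which $\epsilon$-mixing in $L^1$ cannot happen before time $\frac{(1-\epsilon)n\log n}{2C} - O(n)$. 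So the only remaining task is a linear-algebra one: show that the choice $C = \log 2$ is achievable for every connected graph.

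First I would invoke the lemma just proved (around Equation \eqref{eq:laplaciannew1}): for a connected graph the Laplacian $L$ has rank $n-1$ with kernel $\langle\mathbf{1}\rangle$, and since $c\,\mathbf{d} - c\,\bar d\,\mathbf{1}$ is orthogonal to $\mathbf{1}$ (its entries sum to $c\sum_i d_i - c n\bar d = 0$) and $L$ is self-adjoint, the equation $L\mathbf{x} = c\,\mathbf{d} - c\,\bar d\,\mathbf{1}$ has a solution $\mathbf{x} \perp \mathbf{1}$. Next I would specialize to $c = 2\log 2$, set $\beta = \mathbf{x}$, and observe that then $\frac{1}{2\log 2}L\beta = \mathbf{d} - \bar d\,\mathbf{1}$, so $\frac{1}{2\log 2}L\beta + \frac{(\log 2)\bar d}{\log 2}\mathbf{1} = \mathbf{d} - \bar d\,\mathbf{1} + \bar d\,\mathbf{1} = \mathbf{d}$. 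Hence \eqref{eq:thmeq} holds \emph{with equality} with $C = \log 2$. Finally, plugging $C = \log 2$ into the conclusion \eqref{eq:thmeq2} of Theorem~\ref{thm:nlogn-gen} gives $t_{\epsilon,1}(G) \ge \frac{(1-\epsilon)n\log n}{2\log 2} - O(n)$, which is exactly the claimed bound.

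I would also note the one subtlety worth flagging: Theorem~\ref{thm:nlogn-gen} requires $\beta$ to be non-negative, but the $\beta = \mathbf{x}$ produced above is only guaranteed to be orthogonal to $\mathbf{1}$ and may have negative entries. However, this is handled inside the proof of Theorem~\ref{thm:nlogn-gen} itself via the ``shifting'' step: because $L\mathbf{1} = \mathbf{0}$, replacing $\beta$ by $\beta - (\min_i \beta_i)\mathbf{1}$ leaves $L\beta$ unchanged (hence leaves \eqref{eq:thmeq} valid) while making $\beta$ non-negative with a zero coordinate at the minimizing index $k$, which is precisely the coordinate used to define the worst-case initialization $v(0) = e_k$. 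So no extra work is needed here — one just has to make sure the citation to Theorem~\ref{thm:nlogn-gen} is understood to include that normalization.

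The main ``obstacle'' is really conceptual rather than technical: the entire difficulty has been front-loaded into Theorem~\ref{thm:nlogn-gen} and the design of the augmented entropy function $F(v) = S(v) + \beta^T v$, whose whole point is that the penalty term's expected decrease $-\beta^T\frac{L}{2|E|}v(t)$ cancels the entropy term's possibly-large expected increase $\frac{(\log 2)\mathbf{d}^T v(t)}{|E|}$, leaving a net drift of only $2C/n$ per step. Once that machinery is in place, deriving the universal bound is a two-line computation: solve one linear system orthogonal to $\mathbf{1}$, read off $C = \log 2$. If I were writing this from scratch, the part I would spend the most care on is verifying the sign and normalization bookkeeping in \eqref{eqn:estimate} — that the cancellation genuinely yields the constant $\frac{1}{2\log 2}$ and not something off by a factor of $2$ — since the sharpness of the final constant (matching \citep{chatterjee2019phase} up to the $1-\epsilon$ factor) hinges entirely on it.
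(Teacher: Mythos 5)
Your proposal is correct and follows essentially the same route as the paper: solve $L\mathbf{x} = c\,\mathbf{d} - c\,\bar d\,\mathbf{1}$ with $c = 2\log 2$ (which is solvable since the right-hand side is orthogonal to $\mathbf{1}$ and $L$ is symmetric with kernel $\langle\mathbf{1}\rangle$), observe that \eqref{eq:thmeq} then holds with equality for $C = \log 2$, and invoke Theorem~\ref{thm:nlogn-gen}. Your remark that the non-negativity of $\beta$ is restored by the shifting step inside the proof of Theorem~\ref{thm:nlogn-gen} is exactly how the paper handles it as well.
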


Theorem \ref{thm:genlb} has the immediate corollaries
(see also in Section \ref{subsubsec:univproof}):
\begin{cor}\label{cor:star}
	The star graph $S_{n-1}$ with $n$ nodes has $t_{\epsilon,1} = \Theta_\epsilon(n\log n)$.
\end{cor}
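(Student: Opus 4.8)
\textbf{Proof proposal for Corollary \ref{cor:star}.}

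The plan is to prove the two matching bounds separately. For the lower bound, I would simply invoke Theorem \ref{thm:genlb} (equivalently Theorem \ref{thm:nlogn-gen} with the optimal choice $C=\log 2$): since $S_{n-1}$ is a connected graph on $n$ nodes, we immediately get $t_{\epsilon,1}(S_{n-1}) \geq \frac{(1-\epsilon)n\log n}{2\log 2} - O(n) = \Omega_\epsilon(n\log n)$. No graph-specific work is needed here; this is the whole point of having proved the universal lower bound first.

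For the upper bound, the cleanest route is through the general $L^1$ bound of Theorem \ref{thm: t_1}, namely $t_{\epsilon,1}(G) \leq 4\gamma(G)(\log\sqrt{n} + \log\epsilon^{-1})$, so it suffices to show $\gamma(S_{n-1}) = \Theta(n)$. Recall $\gamma(G) = |E(G)|/\lambda_2(G)$. The star $S_{n-1}$ has $|E| = n-1$ edges, so I just need $\lambda_2(S_{n-1}) = \Theta(1)$. The Laplacian spectrum of the star is classical and easy to compute directly: the eigenvalues are $0$, $1$ (with multiplicity $n-2$, from vectors supported on the leaves summing to zero), and $n$ (from the vector weighting the center against the leaves). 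Hence $\lambda_2(S_{n-1}) = 1$, giving $\gamma(S_{n-1}) = n-1$, and therefore $t_{\epsilon,1}(S_{n-1}) \leq 4(n-1)(\tfrac12\log n + \log\epsilon^{-1}) = O_\epsilon(n\log n)$. Combining the two bounds yields $t_{\epsilon,1}(S_{n-1}) = \Theta_\epsilon(n\log n)$.

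There is essentially no obstacle here; the corollary is a direct consequence of Theorem \ref{thm:genlb} paired with the elementary Laplacian eigenvalue computation for the star. The only mild subtlety is making sure the $\log\sqrt{n}$ slack in the upper bound of Theorem \ref{thm: t_1} is harmless — but since that slack only costs a constant factor relative to $n\log n$, it does not affect the $\Theta_\epsilon$ conclusion. If one wanted the sharp \emph{constant} (as in the complete-graph cutoff of \citep{chatterjee2019phase}) rather than just the order of magnitude, one would need a dedicated argument for the star, but that is beyond what the stated corollary asks for, and indeed Figure \ref{fig:ComStars_exp} already suggests the star's constant may differ from $K_n$'s only at lower order.
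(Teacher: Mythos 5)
Your proposal is correct and follows essentially the same route as the paper: the universal $\Omega(n\log n)$ lower bound from Theorem \ref{thm:genlb} combined with the upper bound from Theorem \ref{thm: t_1} using $\lambda_2(S_{n-1})=1$ and $|E|=n-1$, so $\gamma(S_{n-1})=n-1$. Your explicit Laplacian spectrum for the star and the remark about the harmless $\log\sqrt{n}$ slack are both accurate.
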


(The upper bound easily follows from the spectral
analysis of the star.)

\begin{cor}[Complete graph mixes fastest]\label{cor: fastest L1}
	
	Let $\{G_n\}_{n}$ be any family of 
	connected graphs ($n$ is the number of nodes), and $K_n$ be the complete graph on $n$ nodes. Then we have:
	\[
	\liminf_{\epsilon\rightarrow 0} \left(\liminf_{n\rightarrow \infty}\frac{t_1(\epsilon, G_n)}{t_1(\epsilon, K_n)}\right) \geq 1.
	\]
\end{cor}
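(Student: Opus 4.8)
The final statement to prove is Corollary~\ref{cor: fastest L1}, which asserts that the complete graph mixes asymptotically fastest among all connected graphs in the $L^1$ sense, expressed through the iterated liminf of the ratio $t_1(\epsilon,G_n)/t_1(\epsilon,K_n)$.

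\medskip

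\textbf{The plan.} The corollary is essentially a bookkeeping consequence of two facts already established in the excerpt: the universal lower bound of Theorem~\ref{thm:genlb} applied to the family $\{G_n\}$, and the known sharp asymptotics for the complete graph from~\citep{chatterjee2019phase}. First I would invoke Theorem~\ref{thm:genlb} for each $G_n$ to get
\[
t_1(\epsilon, G_n) \;\geq\; \frac{1-\epsilon}{2\log 2}\, n\log n - O(n),
\]
where the $O(n)$ term is uniform in the family (it depends only on the absolute constants appearing in the proof of Theorem~\ref{thm:nlogn-gen}, namely $1/(e\log 2)$ divided by $2C/n$ with $C=\log 2$, together with the lower-order error in the discretization). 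Second, I would recall that for the complete graph the sharp cutoff of~\citep{chatterjee2019phase} gives, for every fixed $\epsilon\in(0,1)$,
\[
t_1(\epsilon, K_n) \;=\; \frac{1}{2\log 2}\, n\log n + O\!\left(n\sqrt{\log n}\right),
\]
so in particular $t_1(\epsilon,K_n) \sim \frac{1}{2\log 2} n\log n$ as $n\to\infty$, with the leading constant independent of $\epsilon$.

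\medskip

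\textbf{Taking the limits.} Dividing the two displays, for each fixed $\epsilon$ I obtain
\[
\frac{t_1(\epsilon, G_n)}{t_1(\epsilon, K_n)}
\;\geq\;
\frac{\tfrac{1-\epsilon}{2\log 2} n\log n - O(n)}{\tfrac{1}{2\log 2} n\log n + O(n\sqrt{\log n})}
\;\xrightarrow[n\to\infty]{}\; 1-\epsilon,
\]
since both $O(n)/(n\log n)$ and $O(n\sqrt{\log n})/(n\log n)$ vanish. Hence $\liminf_{n\to\infty} t_1(\epsilon,G_n)/t_1(\epsilon,K_n) \geq 1-\epsilon$ for every fixed $\epsilon\in(0,1)$. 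Taking $\liminf_{\epsilon\to 0}$ on both sides (equivalently, letting $\epsilon\downarrow 0$, since $1-\epsilon\to 1$) yields
\[
\liminf_{\epsilon\to 0}\left(\liminf_{n\to\infty}\frac{t_1(\epsilon, G_n)}{t_1(\epsilon, K_n)}\right) \;\geq\; 1,
\]
which is exactly the claim.

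\medskip

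\textbf{The main obstacle.} There is no deep difficulty here; the only point requiring care is the \emph{uniformity} of the $O(n)$ error term in the lower bound across the whole family $\{G_n\}$. Inspecting the proof of Theorem~\ref{thm:nlogn-gen}, the additive error comes solely from the constant $\frac{1}{e\log 2}$ in Corollary~\ref{cor:logn}/Lemma~\ref{lem:elem} and from rounding $t_{\epsilon,1}$ to an integer, neither of which depends on $G_n$ — so the bound $t_1(\epsilon,G_n) \geq \frac{(1-\epsilon)}{2\log 2} n\log n - Kn$ holds with an absolute constant $K$ for all connected graphs on $n$ nodes. One should also note that $t_1(\epsilon,K_n)>0$ for $n$ large so the ratio is well-defined, and that the existence of $\gamma(K_n) = \Theta(n)$ guarantees $K_n$ genuinely mixes in $\Theta(n\log n)$ and not faster, so the denominator is not artificially small. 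With uniformity of the error in hand, the remainder is the elementary limit computation above.
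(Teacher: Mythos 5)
Your proposal is correct and follows essentially the same route as the paper: it combines the universal lower bound $t_1(\epsilon,G_n)\geq \frac{1-\epsilon}{2\log 2}n\log n - O(n)$ from Theorem~\ref{thm:genlb} with the sharp asymptotics $t_1(\epsilon,K_n)\sim \frac{1}{2\log 2}n\log n$ from \citep{chatterjee2019phase}, divides, and sends $n\to\infty$ then $\epsilon\to 0$. Your added remark on the uniformity of the $O(n)$ term across the family is a point of care the paper leaves implicit, but it does not change the argument.
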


It is natural to ask whether $K_n$ mixes the fastest under the $L^1$ metric for any fixed
$\epsilon$. Perhaps surprisingly, this is not the case. The star graph seems to mix 
faster than the complete graph 
for $\epsilon$ values that are between 2 and a threshold, which is 1 or slightly larger than 1
(the threshold seems to converge to 1 as 
$n$ converges to infinity). 
For small values of $\epsilon$ however
$K_{n}$ seems to be the best
(see Figures \ref{fig:ComStars_exp} and \ref{fig:EDelta1}). Alternatively, we
can fix $t$ and compute
$\bE[||v(t))-\frac{1}{n}{\bf 1}||_{1}]$
as an indication for the mixing speed.
For $t=1$ and $t=2$ the clique gives a 
lower expectation value than the star graph 
of the same size, for $t=3$ the clique still has a tiny advantage, which reverses by $t=4$.
For very small $t$s these numbers can be explicitly computed. For larger $t$s we refer the reader to Figure \ref{fig:ComStars_exp}.

\begin{figure}[H]
	\begin{centering}
		\includegraphics[scale=0.28]{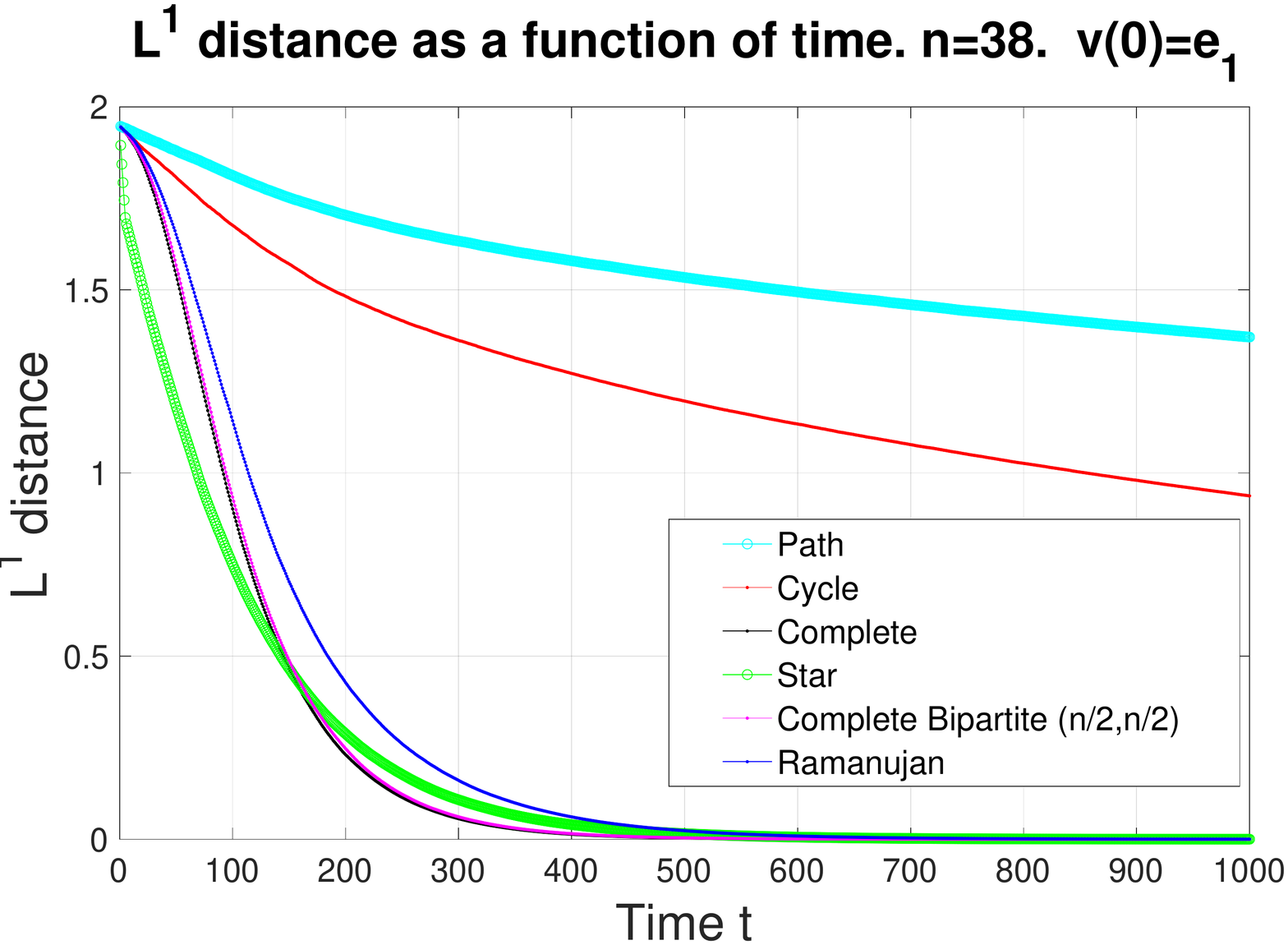}
		\includegraphics[scale=0.28]{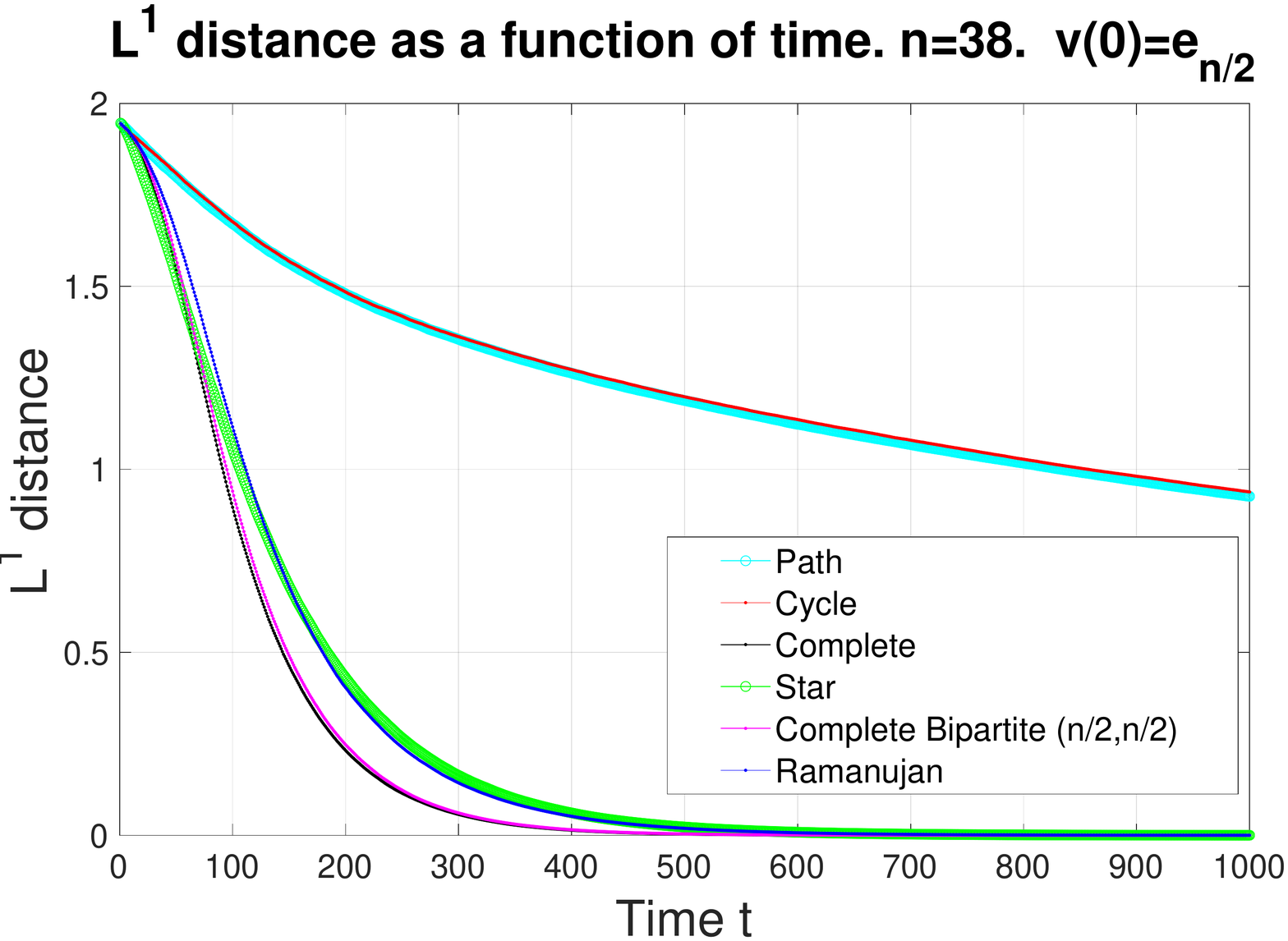}
		\par\end{centering}
	\caption{\label{fig:EDelta1}
		Comparison of the 
		$L^{1}$ convergence for the Path, Cycle, Complete, Star, Complete Bipartite graphs and 
		for a relatively sparse expander graph, each on 38 nodes.
		The left and right 
		figures differ only where we place 
		the starting weight 1 (all other starting weights are 0: On the left figure we place the 1 in the middle of the star and the side of the path, on the right 
		in the middle of the path and on a terminal node of the star; the other graphs are symmetric.}
\end{figure}

\subsubsection{Improved bounds: $\alpha$-covering, flow and comparison techniques.}\label{subsubsec:improved bound}

In this section we introduce new techniques to bound $t_{\epsilon,1}$ for several instances.
These bounds will add information not derivable from
Theorems \ref{thm:genlb} or \ref{thm: t_1}.
Our first very simple but also very useful observation
will be that the worst initialization always happens at a corner of the $L^{1}$-unit ball. 
Our proof  uses the convexity of 
the $L^1$ norm together with a coupling argument. 

\begin{dfn}[$L^{1}$-unit ball $\Diamond$]
	The $L_{1}$-unit ball is the set
	\[
	\Diamond_{n} = 
	\{v\in \mathbb{R}^{n}\mid ||v||_{1} \le 1\}
	\]
	When $n$ is clear from the context, 
	we just write $\Diamond$ for $\Diamond_{n}$.
\end{dfn}

We have:

\begin{prop}\label{prop:diam}
	$\Diamond_{n}$ is the convex hall
	of $\{e_{1},\ldots,e_{n},-e_{1},\ldots,-e_{n}\}$.
\end{prop}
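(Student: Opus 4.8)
The final statement to prove is Proposition~\ref{prop:diam}: that the $L^1$-unit ball $\Diamond_n$ equals the convex hull of $\{\pm e_1,\dots,\pm e_n\}$.

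\medskip

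The plan is to prove the two inclusions separately. First I would show $\operatorname{conv}\{\pm e_1,\dots,\pm e_n\}\subseteq\Diamond_n$: since each $\pm e_i$ has $\|\pm e_i\|_1 = 1 \le 1$, each lies in $\Diamond_n$, and $\Diamond_n$ is convex (the $L^1$ norm is a norm, so its sublevel set $\{\|v\|_1\le 1\}$ is convex), hence it contains every convex combination of the $\pm e_i$. This direction is immediate.

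\medskip

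The reverse inclusion $\Diamond_n\subseteq\operatorname{conv}\{\pm e_1,\dots,\pm e_n\}$ is the substantive part, though still elementary. Given $v = (v_1,\dots,v_n)$ with $\|v\|_1 = \sum_i |v_i| \le 1$, I would write $v$ explicitly as a convex combination. For each coordinate set $\lambda_i = |v_i|$ and $\sigma_i = \sgn(v_i)$ (with $\sigma_i$ chosen arbitrarily, say $+1$, when $v_i = 0$), so that $v = \sum_{i=1}^n \lambda_i\,(\sigma_i e_i)$ where each $\sigma_i e_i \in \{\pm e_1,\dots,\pm e_n\}$ and $\sum_i \lambda_i = \|v\|_1 \le 1$. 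The coefficients are nonnegative but may sum to something strictly less than $1$; to get an honest convex combination I would absorb the slack $1 - \sum_i\lambda_i \ge 0$ by adding it to any one of the coefficients while pairing it with the zero vector, or more cleanly, by noting that $0 = \tfrac12 e_1 + \tfrac12(-e_1)$ so that the origin itself lies in the convex hull, and then writing $v = \big(\sum_i \lambda_i (\sigma_i e_i)\big) + \big(1 - \sum_i\lambda_i\big)\cdot 0$, a genuine convex combination of points of the hull. This exhibits $v \in \operatorname{conv}\{\pm e_1,\dots,\pm e_n\}$.

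\medskip

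I do not anticipate any real obstacle here — the only point requiring a moment's care is the handling of the slack when $\|v\|_1 < 1$ strictly, which is dispatched by observing the origin is in the hull (or equivalently by padding with a $\pm e_1$ pair). An alternative, slightly slicker route is to invoke that $\Diamond_n$ is a compact convex polytope and identify its vertices directly: a point $v$ is extreme in $\Diamond_n$ iff it cannot be written as a proper midpoint of two distinct points of $\Diamond_n$, and a short case check shows the only such points are $\pm e_1,\dots,\pm e_n$ (any $v$ with $\|v\|_1 < 1$ is an interior point hence not extreme; any $v$ with $\|v\|_1 = 1$ supported on two or more coordinates can be perturbed within the facet). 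Then Minkowski's theorem (a compact convex set is the convex hull of its extreme points) finishes it. I would likely present the first, fully self-contained argument since it avoids citing Minkowski's theorem and makes the decomposition explicit, which is also what is actually used downstream in the coupling argument for Theorem~\ref{thm: L1, slowest initialization}.
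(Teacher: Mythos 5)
Your proof is correct and complete. Note that the paper itself gives no proof of Proposition~\ref{prop:diam} at all --- it is simply asserted as a standard fact --- so your argument (convexity of the norm ball for one inclusion, the explicit decomposition $v=\sum_i |v_i|\,\sgn(v_i)\,e_i$ with the slack absorbed by the origin for the other) supplies exactly the routine verification the authors chose to omit, and your handling of the $\lVert v\rVert_1<1$ case is the one detail worth spelling out.
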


\begin{lem}\label{lem:neg}
	The mixing properties of the 
	process do not change if we replace
	$v(0)$ with $-v(0)$.
\end{lem}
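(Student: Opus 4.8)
The final statement to prove is Lemma~\ref{lem:neg}: the mixing properties of the averaging process are invariant under replacing $v(0)$ by $-v(0)$.

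\medskip

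\textbf{Plan.} The key observation is that the averaging process is \emph{linear}: if an edge $(i,j)$ is picked at step $t$, the update $v \mapsto M_{ij} v$ is given by a fixed averaging matrix $M_{ij}$ (the identity with the $i$-th and $j$-th rows each replaced by $\tfrac12(e_i + e_j)$). Consequently, along any fixed realization of the sequence of chosen edges $(i_1,j_1),\dots,(i_t,j_t)$, the state at time $t$ is $v(t) = M_{i_tj_t}\cdots M_{i_1j_1}\, v(0) =: P_t\, v(0)$, where $P_t$ is a random matrix depending only on the (graph-intrinsic) random edge sequence and \emph{not} on $v(0)$. The plan is to run the exact same realization of the random edge sequence with the two initializations $v(0)$ and $-v(0)$, i.e., to couple the two processes by using identical randomness. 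Under this coupling, the two state vectors at every time $t$ are $P_t v(0)$ and $P_t(-v(0)) = -P_t v(0)$, which are negatives of each other pointwise along every branch.

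\medskip

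\textbf{Key steps, in order.} First I would state the linearity of a single step: picking edge $(i,j)$ acts by a linear map $M_{ij}$, so the whole process from time $0$ to time $t$ is multiplication by a random product $P_t$ whose law is determined entirely by $G$. Second, note that $\bar v$ also transforms linearly: $\overline{(-v(0))} = -\bar v(0)$, since $\bar v = \tfrac1n(\mathbf 1^T v(0))\mathbf 1$ is a linear function of $v(0)$; and of course $\|{-}v(0)\|_1 = \|v(0)\|_1$, so the normalization constraint in Definition~\ref{def:tpq} is preserved. Third, couple: fix the random edge sequence and observe that if $v(t)$ denotes the state started from $v(0)$ and $v'(t)$ the state started from $-v(0)$, then $v'(t) = -v(t)$ and $\overline{v'} = -\bar v$ deterministically on every branch, hence $\|v'(t) - \overline{v'}\|_q = \|{-}(v(t) - \bar v)\|_q = \|v(t) - \bar v\|_q$ for every $q$. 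Taking $q$-th powers and expectations over the (shared) randomness gives $\bE[\|v'(t) - \overline{v'}\|_q^q] = \bE[\|v(t) - \bar v\|_q^q]$ for all $t$ and all $p,q$, so in particular $t_{\epsilon,p\to q}$ is unchanged, and likewise the expected-entropy and $L^1$ trajectories used elsewhere in the paper coincide. This establishes the claim.

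\medskip

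\textbf{Main obstacle.} There is essentially no analytic obstacle here — the statement is a soft consequence of linearity plus a same-randomness coupling. The only thing requiring a little care is bookkeeping around what ``mixing properties'' means: one should make explicit that \emph{every} functional the paper tracks (the $L^q$ distance to $\bar v$ in Definition~\ref{def:tpq}, and any auxiliary quantity like $\bE\|v(t) - \bar v\|_1$) is invariant under $v \mapsto -v$ because both $v(t)$ and $\bar v$ flip sign together branchwise, and norms are even. Once that correspondence is spelled out, the proof is immediate; the subtlety is purely in stating the coupling cleanly rather than in any estimate.
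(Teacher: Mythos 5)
Your proof is correct and follows essentially the same route the paper takes: the paper leaves this lemma unproved, treating it as an immediate consequence of the branchwise linearity of the process (Propositions~\ref{prop:lincomb} and \ref{prop:translation and scaling} with $\lambda=-1$, $c=0$), which is exactly your fixed-edge-sequence coupling combined with the evenness of the norm. Nothing further is needed.
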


\begin{thm*}[Theorem \ref{thm: L1, slowest initialization} in Section \ref{Sec:Intro}]
	Let $G$ be a graph with $n$ nodes. For every fixed $t$, define $v_{{\rm slowest},t}$
	such that 
	$\bE \lVert v(t) - \bar v \rVert_1$
	is maximized (if there are 
	more than one such starting vectors we can non-deterministically pick one). Then we can define
	\[
	v_{{\rm slowest},t} = e_i = (0, \cdots,0, \underbrace{1}_{i}, 0,\cdots, 0)\;\;\;\;
	{\rm for} \; {\rm some} \; 1\le i\le n
	\]
\end{thm*}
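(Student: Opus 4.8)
The plan is to show that the maximum of the convex function $v \mapsto \bE\lVert v(t) - \bar v\rVert_1$ over the polytope $\Diamond_n$ is attained at a vertex, and then to use the sign symmetry (Lemma~\ref{lem:neg}) together with a coupling/averaging argument to reduce the vertex $-e_i$ to the vertex $e_i$. The first ingredient is the observation that for each fixed $t$ the map $\Phi_t(v(0)) := \bE\lVert v(t) - \bar v\rVert_1$ is convex in $v(0)$: conditioned on the sequence of chosen edges $(i_1,j_1),\dots,(i_t,j_t)$, the state $v(t)$ depends \emph{linearly} on $v(0)$ (it is a fixed product of averaging matrices applied to $v(0)$), and $\bar v$ is also linear in $v(0)$; hence $v(0) \mapsto \lVert v(t) - \bar v\rVert_1$ is a composition of a linear map with the convex norm $\lVert\cdot\rVert_1$, and $\Phi_t$ is the expectation (a convex combination over edge-sequences) of such convex functions, so $\Phi_t$ is convex.

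Next I would invoke that $\Diamond_n = \operatorname{conv}\{\pm e_1,\dots,\pm e_n\}$ (Proposition~\ref{prop:diam}), so the constraint $\lVert v(0)\rVert_1 \le 1$ defines a polytope whose vertices are exactly $\{\pm e_i\}$. A convex function on a polytope attains its maximum at a vertex; therefore $v_{\mathrm{slowest},t}$ may be taken to be $\pm e_i$ for some $i$. (One should note the definition of $t_{\epsilon,1}$ uses $\lVert v(0)\rVert_1 = 1$ rather than $\le 1$, but since $\Phi_t(0) = 0$ this is harmless — the maximum over the ball equals the maximum over the sphere — or one argues directly that $\Phi_t$ is positively homogeneous of degree one and hence maximized on the boundary.) Finally, Lemma~\ref{lem:neg} states that the mixing behavior is unchanged under $v(0) \mapsto -v(0)$, which gives $\Phi_t(-e_i) = \Phi_t(e_i)$; so whenever the argmax is attained at $-e_i$ we may equally well declare $v_{\mathrm{slowest},t} = e_i$, which is the claim.

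I expect the only genuinely delicate point to be the \emph{convexity} step — more precisely, being careful that the expectation is over a distribution (the edge-sequence) that does \emph{not} depend on $v(0)$, so that the convex combination is legitimate. Everything downstream (vertex maximization, sign symmetry) is then routine. A minor bookkeeping issue is matching the problem's normalization $\lVert v(0)\rVert_1 = 1$ to the polytope $\Diamond_n$, handled by homogeneity as noted above. If one wanted to avoid citing Lemma~\ref{lem:neg} as a black box, the reduction from $-e_i$ to $e_i$ can instead be seen directly: the averaging dynamics commute with negation of the state vector and $\lVert -w\rVert_1 = \lVert w\rVert_1$, so $\lVert v(t)-\bar v\rVert_1$ has the same law started from $e_i$ as started from $-e_i$, branch by branch.
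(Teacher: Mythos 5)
Your proposal is correct and is essentially the paper's own argument: the paper's Proposition~\ref{prop:convex_combination} is exactly your convexity step, proved the same way (couple the processes on a common edge sequence, use that $v(t)$ is linear in $v(0)$ conditionally on that sequence, then apply the triangle inequality for $\lVert\cdot\rVert_1$), after which Proposition~\ref{prop:diam} and Lemma~\ref{lem:neg} finish it identically. Your remark about the sphere-versus-ball normalization is a small extra care the paper handles implicitly by writing any $v$ with $\lVert v\rVert_1=1$ directly as a convex combination of the $\pm e_i$.
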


The theorem follows
from Proposition \ref{prop:diam},
Lemma \ref{lem:neg} and from the following fact:
\begin{prop}
	\label{prop:convex_combination}
	Assume that $v$ is a convex combination of 
	$v_{1},\ldots,v_{\ell}$, i.e. 
	$v = \sum_{i=1}^{\ell}\lambda_{i}v_{i}$, where
	$\sum_{i=1}^{\ell}\lambda_{i}\; =\; 1$ and
	$\lambda_{i}\ge 0$ for $1\le i\le \ell$.
	Let $v(t)$, respectively $v_{i}(t)$ 
	denote state vector 
	when we start from $v$,
	respectively from $v_{i}$
	($1\le i\le \ell$). Then
	\[
	\bE (\lVert v(t) - \bar v \rVert_1)
	\; \le \; \sum_{i=1}^{\ell}\lambda_{i}\;
	\bE (\lVert v_{i}(t) - \bar v \rVert_1)
	\]
\end{prop}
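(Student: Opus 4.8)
\textbf{Proof proposal for Proposition \ref{prop:convex_combination}.}

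The plan is to show that the map $w \mapsto \bE[\lVert v_w(t) - \bar v\rVert_1]$, where $v_w(t)$ denotes the state at time $t$ when the process is started from $w$, is convex in $w$, which immediately yields the inequality $\bE[\lVert v(t) - \bar v\rVert_1] \le \sum_i \lambda_i \bE[\lVert v_i(t) - \bar v\rVert_1]$ by Jensen's inequality applied to a convex combination. The key observation is that the averaging process is \emph{linear} in the state vector: given a fixed realization $\omega$ of the sequence of chosen edges $e_1, \ldots, e_t$, the state at time $t$ is $v_w(t) = M_\omega\, w$, where $M_\omega = P_{e_t} \cdots P_{e_1}$ is the product of the corresponding averaging matrices (each $P_{e}$ is the symmetric stochastic matrix that replaces the two coordinates on edge $e$ by their average). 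Crucially, $M_\omega$ does \emph{not} depend on $w$: the same random edges can be used to drive the process from any starting point, which is exactly the coupling referred to in the text.

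The steps, in order, are as follows. First, fix the randomness $\omega$ and note that $v_w(t) = M_\omega w$ and that $M_\omega \mathbf{1} = \mathbf{1}$, so since $\bar v = a\mathbf{1}$ with $a = \frac{1}{n}\sum_j w_j$ the mean is preserved and behaves linearly in $w$; in particular if $w = \sum_i \lambda_i v_i$ then $\bar v = \sum_i \lambda_i \bar v_i$ (using that all the $v_i$, and hence $v$, are probability vectors so $a = 1/n$ throughout, matching the normalization in the statement). Second, for this fixed $\omega$ write
\[
\lVert v(t) - \bar v\rVert_1 = \Bigl\lVert M_\omega\Bigl(\sum_i \lambda_i v_i\Bigr) - \sum_i \lambda_i \bar v_i\Bigr\rVert_1 = \Bigl\lVert \sum_i \lambda_i\bigl(M_\omega v_i - \bar v_i\bigr)\Bigr\rVert_1 \le \sum_i \lambda_i \lVert M_\omega v_i - \bar v_i\rVert_1,
\]
by the triangle inequality and $\lambda_i \ge 0$. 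Third, take expectations over $\omega$ on both sides; the right-hand side becomes $\sum_i \lambda_i \bE[\lVert v_i(t) - \bar v_i\rVert_1]$, which is the claimed bound. The coupling is what makes the pointwise (in $\omega$) inequality legitimate: we realize all $\ell + 1$ processes on the same probability space using the same edge sequence.

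I do not expect a genuine obstacle here — the argument is essentially a one-line consequence of linearity of the process plus the triangle inequality. The only point requiring a little care is bookkeeping around the mean $\bar v$: one must be sure that comparing against the \emph{same} target is legitimate, which holds because all vectors involved are probability distributions (so $\bar v = \frac1n\mathbf 1$ in every case), and because $M_\omega$ fixes $\mathbf 1$ the subtraction of $\bar v$ commutes with the linear combination. Once that is noted, combining this proposition with Proposition \ref{prop:diam} (which writes any point of $\Diamond_n$ as a convex combination of $\pm e_i$) and Lemma \ref{lem:neg} (which lets us discard the $-e_i$) shows the maximum of $\bE[\lVert v(t) - \bar v\rVert_1]$ over $\Diamond_n$ is attained at some $e_i$, giving Theorem \ref{thm: L1, slowest initialization}.
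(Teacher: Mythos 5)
Your proposal is correct and is essentially the paper's own argument: couple all processes on the same random edge sequence, use linearity of the process (the paper's Proposition \ref{prop:lincomb}, your $M_\omega$) to write $v(t)=\sum_i\lambda_i v_i(t)$ and $\bar v=\sum_i\lambda_i\bar v_i$ pointwise, apply the triangle inequality, and take expectations. Your extra remark about all vectors being probability distributions is not needed, since the argument already handles the means correctly via $\bar v=\sum_i\lambda_i\bar v_i$.
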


\begin{proof}
	We couple two processes. The first process is simply the averaging process with initial vector $v$, while the second process  has a random initialization, which equals $v_i$ with probability $\lambda_i$ ($1\le i\le \ell$). After specifying the initial vector, we always choose the same edge for both processes at every step. After $t$ steps, we observe that
	\[
	\lVert v(t) - \bar v \rVert_1 
	\; = \; \left\lVert \sum_{i=1}^n \lambda_{i} (v_i(t) - \bar v_i )\right\rVert_1 \; 
	\; \le \;
	\sum_{i=1}^n \lambda_{i} 
	\left\lVert 
	v_i(t) - \bar v_i
	\right\rVert_1
	\]
	taking expectation on both sides yields
	\[
	\bE \lVert v(t) - \bar v \rVert_1
	\leq \sum_{i=1}^n 
	\lambda_{i} \bE \; \lVert 
	v_i(t) - \bar v_i
	\rVert_1\; ,
	\]
	as desired.
\end{proof}

\begin{proof}
	(of theorem
	\ref{thm: L1, slowest initialization})
	From the above theorem we
	get that when
	$v$ is a convex combination of 
	$v_{i}$s, then
	$\bE \lVert v(t) - \bar v \rVert_1
	\; \leq \; \max_{i=1}^\ell 
	\bE \; \lVert 
	v_i(t) - \bar v_i
	\rVert_1$. Assume now that
	$\lVert v \rVert = 1$.
	Since $v$
	is a convex combination
	of the $e_{i}$s and 
	$-e_{i}$s, there must 
	be an $i$ such that either
	$\bE \lVert v(t) - \bar v \rVert_1
	\; \leq \;
	\bE \lVert e_{i}(t) - 
	\frac{1}{n}{\bf 1} \rVert_1$
	or $\bE \lVert v(t) - \bar v \rVert_1
	\; \leq \;
	\bE \lVert -e_{i}(t) 
	-\frac{1}{n}{\bf 1} \rVert_1$.
	But since both right hand sides 
	are the same, for 
	$v_{{\rm slowest},t}$
	we can pick the $e_{i}$ with 
	the positive sign, and
	Theorem 
	\ref{thm: L1, slowest initialization} 
	follows.
\end{proof}

Finding the slowest initialization requires knowing the structure of the underlying graph. For example, by vertex transitivity, every $v(0)=e_{i}$ $(1\le i\le n)$
converges at the same speed for complete graphs and cycles. On the other hand, it is not hard to show that for star graphs starting from a leaf is strictly slower than starting from the root. 

Now we can assume without loss of generality that the process starts at a corner of the simplex. We give some techniques to bound the $L^1$ mixing, which improves the general bound in Theorem \ref{thm: t_1}. 

\paragraph{$\alpha$-covering} 
The $\alpha$-cover time captures the first time that at least $\alpha$-proportion of the entries of $v(t)$ are non-zero. 

\begin{dfn}[$\alpha$-covering time]\label{def:covering}
	For any vector $v\in \bR^n$, let $n(v)\equiv \#\{i: v_i \neq 0\}$ denote the number of non-zero components of $v$. Suppose the averaging process starts at $v(0) = e_i$. We denote by $
	\tcov(\epsilon, i) := \bE[\inf\{t: n(v(t)) = \epsilon n, v(0) = e_i\}]$ the expected time that $v(t)$  has $\alpha n$ non-zero coordinates when
	starting from $e_{i}$.
	We also define the $\alpha$-covering time as $
	\tcov(\alpha) := \max_i \tcov(\alpha, i).$
\end{dfn}
The notion comes from
\citep{chatterjee2019phase},
where it was observed that 
the $\alpha$-covering time
gives a ``makeshift''
lower bound for the $L^1$-mixing time 
of $K_{n}$. 
In general:
\begin{prop}[Alternative lower bound on $t_{\epsilon,1}$]\label{prop: t_1, alternative bound}
	\[
	t_{\epsilon,1} \geq \tcov(1-\epsilon).
	\]
\end{prop}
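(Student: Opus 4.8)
\textbf{Proof proposal for Proposition \ref{prop: t_1, alternative bound}.}

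The plan is to show that if $v(0)=e_i$ and the process has not yet $(1-\epsilon)$-covered the coordinates at time $t$, then $\|v(t)-\bar v\|_1$ is still bounded below by something close to $1$, which forces $t_{\epsilon,1}$ to exceed $\tcov(1-\epsilon)$. The key observation is that each coordinate of $v(t)$ is non-negative (the process preserves non-negativity when started from $e_i$), and the coordinates sum to $1$ in every branch. So on the event $\{n(v(t)) < (1-\epsilon)n\}$ there are more than $\epsilon n$ coordinates equal to exactly $0$, and for each such coordinate $j$ we have $|v_j(t)-\tfrac1n| = \tfrac1n$. Summing just over those zero coordinates gives $\|v(t)-\bar v\|_1 \ge \epsilon n \cdot \tfrac1n = \epsilon$ on that event, deterministically.

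First I would make this event-wise bound precise: let $A_t = \{n(v(t)) < (1-\epsilon)n\}$ and note that on $A_t$, $\|v(t)-\bar v\|_1 \ge \epsilon$ pointwise, hence in particular $\bE[\|v(t)-\bar v\|_1] \ge \epsilon \,\Pr(A_t)$ — but I actually want the stronger statement that $\bE[\|v(t)-\bar v\|_1]\ge \epsilon$ whenever $t$ is small enough, so I should instead argue directly at the level of the definition of $t_{\epsilon,1}$. Concretely, suppose for contradiction that $t_{\epsilon,1} < \tcov(1-\epsilon)$. Pick the index $i$ achieving the max in the definition of $\tcov(1-\epsilon)$, i.e. $\tcov(1-\epsilon,i) = \tcov(1-\epsilon)$. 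Since $n(v(t))$ is non-decreasing in $t$ along every trajectory (averaging two entries, at least one of which is non-zero when the vector is a non-negative probability vector not yet supported everywhere, never destroys a non-zero entry and can only create new ones), the hitting time $\tau_i := \inf\{t: n(v(t)) \ge (1-\epsilon)n\}$ satisfies $\bE[\tau_i] = \tcov(1-\epsilon)$, and for every $t < \tau_i$ we are on the event $A_t$, so $\|v(t)-\bar v\|_1 \ge \epsilon$ on that trajectory at that time.

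The remaining step is to convert the "pointwise-on-$A_t$" bound into a contradiction with $\bE[\|v(t)-\bar v\|_1] \le \epsilon$ at $t = t_{\epsilon,1}$. The cleanest route: because $n(v(t))$ only increases, $\Pr(\tau_i > t)$ is exactly $\Pr(A_t)$, and the definition of $t_{\epsilon,1}$ forces $\bE[\|v(t_{\epsilon,1})-\bar v\|_1]\le\epsilon$; combined with $\|v(t)-\bar v\|_1 \ge \epsilon \mathbf{1}_{A_t} + (\text{something} \ge 0)$ this only gives $\epsilon \ge \epsilon \Pr(A_{t_{\epsilon,1}})$, which is not yet a contradiction. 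To sharpen it I would use that $L^1$ distance to uniform is never \emph{below} a positive constant until full coverage is essentially reached — more carefully, I would invoke the monotonicity to note that if $t_{\epsilon,1} < \tcov(1-\epsilon)=\bE[\tau_i]$, then by a slightly finer accounting (e.g. comparing $t_{\epsilon,1}$ with a high-probability lower bound on $\tau_i$, or by applying the definition with a slightly smaller tolerance and taking limits) one contradicts $\bE[\|v(t_{\epsilon,1})-\bar v\|_1]\le\epsilon$. The main obstacle is precisely this last bookkeeping: turning the expectation $\tcov(1-\epsilon) = \bE[\tau_i]$ into a statement about a fixed time $t_{\epsilon,1}$. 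I expect the paper handles it by the observation that $\bE[\|v(t)-\bar v\|_1] \ge \epsilon$ for \emph{every} $t < \tcov(1-\epsilon)$ would be too strong; instead the correct reading is that $\tcov$ is itself defined as an expectation and the inequality $t_{\epsilon,1}\ge \tcov(1-\epsilon)$ should follow from the event-wise bound $\|v(t)-\bar v\|_1\ge \epsilon$ holding until coverage, together with $t_{\epsilon,1}$ being, by a coupling/averaging argument over trajectories, at least the expected coverage time. I would therefore spend the bulk of the write-up making this averaging-over-trajectories step rigorous, and treat the non-negativity and the "zero coordinates contribute $1/n$ each" facts as the easy ingredients.
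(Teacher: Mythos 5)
Your two ``easy ingredients'' are exactly the paper's proof. The paper starts the process at $e_i$, observes that for every $t\le T_{\mathrm{cov}}(1-\epsilon,i)$ there are at least $\epsilon n$ zero coordinates, each contributing $1/n$ to the $L^1$ distance, so that $\lVert v(t)-\bar v\rVert_1\ge\epsilon$ pathwise, and then writes ``which implies $t_{\epsilon,1}\ge \bE[T_{\mathrm{cov}}(1-\epsilon,i)]$'' before maximizing over $i$ --- with no intermediate step. So you have reconstructed everything the paper actually writes down, and the step you single out as ``the main obstacle'' is precisely the step the paper leaves implicit.

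That obstacle is a genuine gap, and your proposal does not close it: the pathwise bound only yields $\bE\lVert v(t)-\bar v\rVert_1\ge\epsilon\,\Pr(\tau_i>t)$, which exceeds $\epsilon$ only at times $t$ with $\Pr(\tau_i>t)=1$; by itself this lower-bounds $t_{\epsilon,1}$ by the \emph{minimum} of $\tau_i$ over trajectories, not by its expectation, and evaluating it at $t=t_{\epsilon,1}$ gives the vacuous $\Pr(\tau_i>t_{\epsilon,1})\le 1$. None of the repairs you gesture at (contradiction, ``averaging over trajectories,'' shrinking the tolerance and taking limits) supplies the missing implication, and your closing paragraph is essentially a restatement of the desired conclusion. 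What \emph{can} be extracted cleanly is a quantile version: since the coordinates are non-negative and sum to $1$, one has $\lVert v(t)-\bar v\rVert_1=2\sum_j\bigl(\tfrac1n-v_j(t)\bigr)_+\ge\tfrac{2}{n}Z_t$ with $Z_t$ the number of zero coordinates, so $\bE\lVert v(t_{\epsilon,1})-\bar v\rVert_1\le\epsilon$ forces $\bE[Z_{t_{\epsilon,1}}]\le\epsilon n/2$, and Markov's inequality gives $\Pr(\tau_i>t_{\epsilon,1})=\Pr(Z_{t_{\epsilon,1}}>\epsilon n)\le\tfrac12$, i.e.\ $t_{\epsilon,1}$ is at least the median of $\tau_i$. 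For the downstream uses in Propositions \ref{prop: nearly regular graph, covering time} and \ref{prop:expander} the coverage time is a sum of geometric waiting times and concentrates, so median and mean agree up to constants and the $\Omega(n\log n)$ conclusions survive; but the literal inequality $t_{\epsilon,1}\ge\tcov(1-\epsilon)$ with $\tcov$ defined as an expectation needs some such concentration input (or a different argument), which neither your write-up nor the paper's provides.
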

(See the easy proof in Section \ref{para: proof covering time}.)
The quantity $\tcov(1-\epsilon)$ can often be calculated or bounded by coupon-collector type arguments. For example, $\tcov(1-\epsilon)$ is at least $\Theta_\epsilon(n\log n)$ if the graph is nearly regular:
\begin{prop}\label{prop: nearly regular graph, covering time}
	Let $G_n$ be a graph with $n$ nodes which satisfies:
	$\frac{\max_i d_i}{\min_i d_i} \leq C$
	for some universal constant $C$, where $d_i$ is the degree of node $i$. Then 
	$\tcov(1-\epsilon) \geq \frac{n}{2C}\log((1-\epsilon)n).
	$
\end{prop}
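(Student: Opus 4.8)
\textbf{Proof proposal for Proposition~\ref{prop: nearly regular graph, covering time}.}

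The plan is to reduce the covering-time bound to a coupon-collector argument run in reverse. Start the process at $v(0) = e_i$ for an arbitrary but fixed vertex $i$, and track the set $Z(t) = \{j : v_j(t) = 0\}$ of still-zero coordinates. Initially $|Z(0)| = n-1$, and a coordinate leaves $Z(t)$ only when an edge connecting a zero vertex to a nonzero vertex is selected; once a coordinate becomes nonzero it stays nonzero (a value $(v_i+v_j)/2$ with $v_i,v_j\ge 0$ is zero only if both are zero). So $n(v(t)) = n - |Z(t)|$ is nondecreasing, and $\tcov(1-\epsilon,i)$ is the expected first time $|Z(t)|$ drops to $n - (1-\epsilon)n = \epsilon n$.

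First I would bound, at any step $t$ with $n(v(t)) = m < (1-\epsilon)n$, the conditional probability that this step increases $n(v(t))$. A step increases the count iff the chosen edge has exactly one endpoint in the current nonzero set $N(t)$; the number of such ``boundary'' edges is at most $\sum_{j \in N(t)} d_j \le m \cdot \max_i d_i$. Hence the conditional probability of progress is at most $\frac{m \max_i d_i}{|E|}$. Now I would lower-bound $|E| = \tfrac12\sum_j d_j \ge \tfrac{n}{2}\min_i d_i$, so the progress probability is at most $\frac{m \max_i d_i}{(n/2)\min_i d_i} \le \frac{2Cm}{n}$, using the hypothesis $\max_i d_i / \min_i d_i \le C$.

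Next I would convert these per-step bounds into an expectation. Let $\tau_m$ be the first time $n(v(t))$ reaches $m$; then $\tcov(1-\epsilon,i) = \bE[\tau_{\lceil(1-\epsilon)n\rceil}] = \sum_{m} \bE[\tau_{m+1} - \tau_m]$, where $m$ ranges over the values $1, 2, \ldots, \lceil(1-\epsilon)n\rceil - 1$. While $n(v(t)) = m$, each step succeeds (advances the count) with probability at most $2Cm/n$, so the waiting time $\tau_{m+1}-\tau_m$ stochastically dominates a geometric random variable with success probability $2Cm/n$, giving $\bE[\tau_{m+1}-\tau_m] \ge \frac{n}{2Cm}$. (One must be slightly careful: the success probability is only an upper bound and it can fluctuate with $v(t)$, but since the geometric mean is monotone decreasing in the success probability, dominating the true success probability by $2Cm/n$ at every step yields exactly this lower bound on the expected waiting time; a clean way to phrase this is via the supermartingale / optional-stopping argument on $\tau_m$ minus the compensator.) Summing, $\tcov(1-\epsilon,i) \ge \sum_{m=1}^{(1-\epsilon)n} \frac{n}{2Cm} = \frac{n}{2C} H_{(1-\epsilon)n} \ge \frac{n}{2C}\log((1-\epsilon)n)$, where $H_k$ is the $k$-th harmonic number and I used $H_k \ge \log k$. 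Since this holds for every starting vertex $i$, it holds for the max, giving the claimed bound on $\tcov(1-\epsilon)$.

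The main obstacle is the step where fluctuating, state-dependent success probabilities are replaced by the uniform upper bound $2Cm/n$ inside the expectation of a geometric waiting time. Intuitively it is obvious that harder-to-advance steps only make the covering time longer, but making it rigorous requires either (a) a coupling with an i.i.d.\ sequence of Bernoulli trials of parameter $2Cm/n$, or (b) an optional-stopping argument showing that $n(v(t)) - \sum_{s<t} \frac{2C\, n(v(s))}{n}$ is a supermartingale, so that at the stopping time $\tau_{(1-\epsilon)n}$ one gets $\bE[n(v(\tau))] \le \bE\big[\sum_{s<\tau} \frac{2C\,n(v(s))}{n}\big] \le \frac{2C(1-\epsilon)n}{n}\bE[\tau]$; rearranged, this is not quite the harmonic-sum bound but a cruder $\bE[\tau] \ge \frac{(1-\epsilon)n}{2C}\cdot\frac{1}{1}$-type inequality, so to recover the $\log$ factor one really does want the telescoped coupon-collector version with the refined per-phase geometric waiting times. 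I expect option (a) — an explicit coupling of each phase to an independent geometric — to be the cleanest route and would write the proof that way.
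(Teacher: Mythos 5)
Your proposal is correct and follows essentially the same route as the paper's proof: decompose the covering time into the waiting times between successive increases of the nonzero count, bound the number of boundary edges by $m\max_i d_i$ and $|E|$ from below by $\tfrac{n}{2}\min_i d_i$ to get a per-step success probability of at most $2Cm/n$, and sum the resulting geometric expectations into a harmonic series. The only difference is that you are more explicit than the paper about justifying the stochastic domination when the success probability fluctuates with the state, which is a welcome refinement but not a different argument.
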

\begin{prop}[Expander graph]\label{prop:expander}
	Let $G_{n}$ be a 
	be a family of bounded degree 
	regular expander graphs with $n$ nodes.
	Then $\tcov(1-\epsilon,\; G_{n}) =
	\Theta_\epsilon(n\log n)$.
\end{prop}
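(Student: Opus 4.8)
The plan is to prove Proposition \ref{prop:expander} by combining its two directions: the lower bound $\tcov(1-\epsilon, G_n) = \Omega_\epsilon(n\log n)$ and the upper bound $\tcov(1-\epsilon, G_n) = O_\epsilon(n\log n)$. The lower bound is essentially free: a bounded-degree regular expander has $\max_i d_i / \min_i d_i = 1$, so Proposition \ref{prop: nearly regular graph, covering time} applies directly with $C$ equal to the (constant) degree $d$, giving $\tcov(1-\epsilon, G_n) \geq \frac{n}{2d}\log((1-\epsilon)n) = \Omega_\epsilon(n\log n)$. So the substance of the proof is the matching upper bound.

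For the upper bound, the natural approach is a coupon-collector-style argument adapted to the expander geometry. Start from $v(0) = e_i$. The support of $v(t)$ can only grow, and it grows precisely when a chosen edge $(a,b)$ has exactly one endpoint in $\supp(v(t))$ (a ``boundary'' edge). The key expander property is vertex/edge expansion: as long as $|\supp(v(t))| = k \leq n/2$, the number of boundary edges is at least $c \cdot k$ for some expansion constant $c>0$ (and symmetrically at least $c(n-k)$ once $k > n/2$). Since each step picks a uniformly random edge from the $|E| = \Theta(n)$ edges, the probability that the support increases in a given step is at least $\frac{c \min(k, n-k)}{|E|} = \Omega(\min(k,n-k)/n)$. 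Hence, summing the expected waiting times to go from support size $k$ to $k+1$ over $k = 1, \dots, (1-\epsilon)n$, one gets
\[
\tcov(1-\epsilon, G_n) \;\leq\; \sum_{k=1}^{(1-\epsilon)n} \frac{|E|}{c\,\min(k, n-k)} \;=\; O\!\left(n \sum_{k=1}^{(1-\epsilon)n} \frac{1}{\min(k, n-k)}\right) \;=\; O_\epsilon(n\log n),
\]
where the harmonic sum over $k \leq n/2$ contributes $O(\log n)$ and the tail $n/2 < k \leq (1-\epsilon)n$ contributes $O(\log(1/\epsilon)) = O_\epsilon(1)$, since there $\min(k,n-k) = n-k \geq \epsilon n$. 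This gives the desired $O_\epsilon(n\log n)$ bound, and combined with the lower bound establishes $\Theta_\epsilon(n\log n)$.

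One technical point to handle carefully: the bound ``probability of support increase $\geq \Omega(\min(k,n-k)/n)$'' needs the expansion inequality to hold uniformly for \emph{every} subset $S$ with $|S|=k$, not just in expectation over where the support happens to be — but this is exactly what (edge) expansion of a bounded-degree expander family guarantees, with a constant $c$ independent of $n$. A second point is that the waiting time to increase the support, given current support $S$, is a geometric random variable whose success probability is at least $c|\partial S|/|E|$; since these successive waiting times are not independent (the support set evolves), one should phrase the argument via a stopping-time decomposition $\tcov = \sum_k \tau_k$ where $\tau_k$ is the time spent with support size exactly $k$, and bound $\bE[\tau_k]$ conditionally on the history (the conditional success probability is always at least $\frac{c \min(k,n-k)}{|E|}$ regardless of which size-$k$ set the support is), then use the tower property. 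The main obstacle, such as it is, is just making this conditional-expectation bookkeeping rigorous; the expansion input and the harmonic-sum estimate are routine once that scaffolding is in place.
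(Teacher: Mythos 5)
Your proof is correct, and for the upper bound it takes a genuinely different route from the paper. The paper never argues about the support directly for the upper bound: it observes that a bounded-degree expander has $\gamma(G_n)=|E(G_n)|/\lambda_2(G_n)=\Theta(n)$, invokes the spectral bound of Theorem \ref{thm: t_1} to get $t_{\epsilon,1}(G_n)=O(n\log n)$, and pairs this with the $\Omega(n\log n)$ lower bound from Proposition \ref{prop: nearly regular graph, covering time} (indeed the paper's proof is phrased entirely in terms of $t_{\epsilon,1}$, matching Table \ref{tab:L1} rather than the literal $\tcov$ statement; the bound on $\tcov$ then follows only implicitly via $\tcov(1-\epsilon)\le t_{\epsilon,1}$ from Proposition \ref{prop: t_1, alternative bound}). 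You instead bound the covering time directly by a support-growth argument: the support is monotone under nonnegative initialization, it increases exactly when a boundary edge is drawn, and edge expansion gives at least $c\min(k,n-k)$ boundary edges uniformly over all size-$k$ supports, so the stopping-time decomposition with geometric domination yields the harmonic sum and $O_\epsilon(n\log n)$. Your route is more elementary (no spectral input) and proves the proposition as literally stated about $\tcov$; the paper's route buys the stronger conclusion $t_{\epsilon,1}(G_n)=\Theta_\epsilon(n\log n)$, which your covering-time upper bound alone cannot deliver, since an upper bound on $\tcov$ does not upper-bound the mixing time. One trivial slip: the constant $C$ in Proposition \ref{prop: nearly regular graph, covering time} is the degree ratio $\max_i d_i/\min_i d_i$, which equals $1$ for a regular graph, not the degree $d$; either reading still gives $\Omega(n\log n)$, so nothing is affected.
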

The proofs of the 
above propositions are
in Sections \ref{para: proof near regular} and \ref{para: proof of expander example}.

Although $\tcov(1-\epsilon)$ 
never beats our best
bounds on $t_{\epsilon, 1}$,
and it fails to give good 
bounds for graphs like e.g. the star graph,
it often gives an easy-to-prove
approximation,
and it is applicable
to various averaging 
processes with different weight 
updates, not just ours.

\paragraph{Flow} The  averaging process is a transportation
process, and observing the mass flow can be a way to estimate the mixing time. A typical example is the path graph $P_n$ with $n$ points. Assuming we label the nodes by $1,2,\dots, n$ from left to right, and initialize the process from the leftmost entry. It is clear that the direction of the flow is always from left to right at every step, see Figure \ref{fig:massflow} for illustrations. Lower bounds on the flow at each step can be translated to new upper bounds for the $L^1$ convergence. The 
flow technique is based on the
next general proposition:

\begin{figure}[H]
	\centering
	\includegraphics[scale = 0.5]{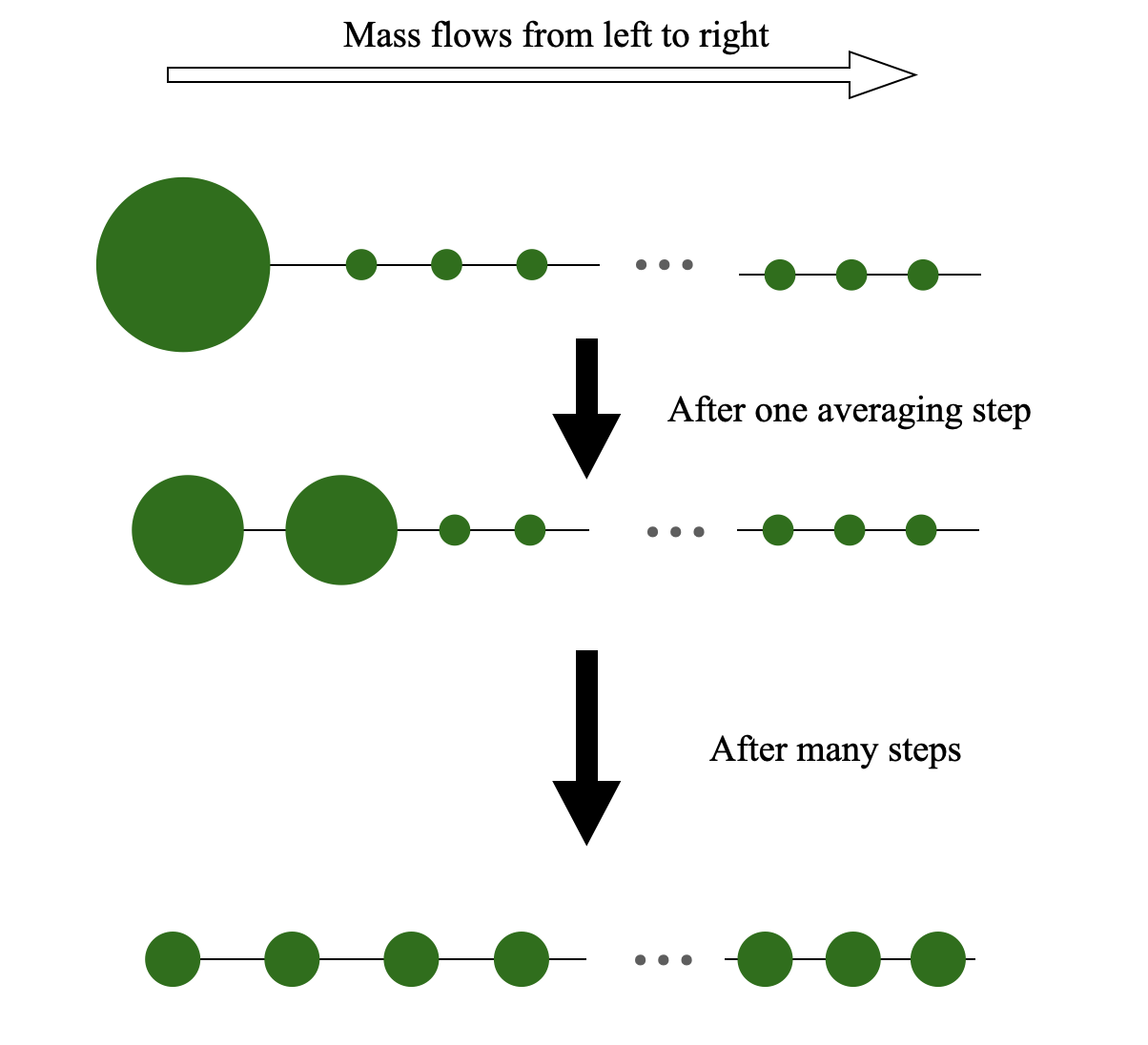}
	\caption{\label{fig:massflow} An illustrative figure for the mass flow on the path graph $P_n$.}
\end{figure}

\begin{prop}[Upper bounding
	$t_{\epsilon,1}$ via the flow technique]\label{prop: flow}
	Let $G$ be a connected undirected graph. 
	For a starting vector 
	$v(0)$ of the 
	averaging process on $G$ 
	define $\bar v := \bar v(0)$.
	Suppose that for any
	$v(0)$ with 
	$\lVert v(0)\rVert \leq 1$ and
	each $t\ge 1$
	there exists a non-negative 
	real-valued random variable 
	$F(t,v(0))$ (called flow) 
	over the random
	edge sequences of length $t$
	such that
	\begin{itemize}
		\item 
		There exists a $C>0$ such that
		for every $v(0)$ 
		with $\lVert v(0)\rVert \leq 1$ and every $t\ge 1$ 
		and every $w\in 
		\mathbb{R}^{V(G)}$
		we have:
		$$\bE[F(t,v(0))\mid
		v(t-1) = w] \geq C\, 
		\lVert w - \bar v \rVert_1$$
		\item The cumulative flow, $\sum_{t} F(t,v(0))$, on 
		the space of infinite
		random edge sequences,
		converges to a 
		random variable $F(v(0))$ in $L^1$, i.e., $\bE[\lVert \sum_{t=1}^k F(t,v(0)) - F(v(0))\rVert]\rightarrow 0$ as $k\rightarrow \infty$. 
	\end{itemize}
	Let $\bE_{\rm max} = 
	\max_{\lVert v(0)\rVert \leq 1} \bE[F(v(0))]$.
	Then the mixing time can be upper-bounded by $$
	t_{\epsilon,1}(G,v(0)) \leq \frac{8\, \bE[F(v(0))]}{\epsilon^2 \,C};
	\;\;\;\;
	t_{\epsilon,1}(G) \leq \frac{8\, \bE_{\rm max}}{\epsilon^2 \,C}$$
	where
	$
	t_{\epsilon,1}(G,v(0)) \equiv\min
	\left\{t\in \mathbb{N}\; \middle\vert \;
	\bE\left[\lVert v(t) - \bar v(0)\rVert_1\right] \leq \epsilon\right\}
	$.
\end{prop}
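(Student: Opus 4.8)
The plan is to show that the process must spend most of its time close to $\bar v$, because each step away from $\bar v$ must ``pay'' at least $C\lVert v(t-1)-\bar v\rVert_1$ units of flow in expectation, while the \emph{total} expected flow ever produced is bounded by $\bE[F(v(0))]$. First I would set up the accounting: let $N_\delta := \#\{t \ge 1 : \lVert v(t-1)-\bar v\rVert_1 > \delta\}$ be the number of steps at which the state is still $\delta$-far from uniform. On any such step, the first bullet gives $\bE[F(t,v(0)) \mid v(t-1)] \ge C\delta$; summing over $t$ and using the tower property together with the second bullet (the cumulative flow converges in $L^1$ to $F(v(0))$, so $\sum_{t\ge 1}\bE[F(t,v(0))] = \bE[F(v(0))]$), I would obtain
\[
C\,\delta\,\bE[N_\delta] \;\le\; \sum_{t\ge 1}\bE\!\big[F(t,v(0))\,\mathbf 1\{\lVert v(t-1)-\bar v\rVert_1 > \delta\}\big] \;\le\; \sum_{t\ge 1}\bE[F(t,v(0))] \;=\; \bE[F(v(0))],
\]
hence $\bE[N_\delta] \le \bE[F(v(0))]/(C\delta)$.

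Next I would convert this bound on the expected number of ``far'' steps into a mixing-time statement. The subtlety is that $\lVert v(t)-\bar v\rVert_1$ is not monotone in $t$ in general, so a single far step does not prevent later closeness; I need a quantity that \emph{is} monotone. The natural candidate is the $L^2$ energy $\lVert v(t)-\bar v\rVert_2^2$, which is non-increasing in every branch of the process (Theorem \ref{Thm:L2_convg} and the observation preceding it). Using $\lVert v(t)-\bar v\rVert_1 \le \sqrt n \,\lVert v(t)-\bar v\rVert_2$ is too lossy; instead I would exploit that once $\lVert v(\tau)-\bar v\rVert_1 \le \delta$ occurs for \emph{some} $\tau \le t$, monotonicity of the $L^2$ norm keeps $\lVert v(t)-\bar v\rVert_2 \le \lVert v(\tau)-\bar v\rVert_2$, but to control the $L^1$ norm at the later time I would rather argue directly: pick $\delta = \epsilon/2$ and observe that if $\bE[N_{\epsilon/2}] \le \bE[F(v(0))]/(C\epsilon/2)$ is small compared to a horizon $T$, then by Markov's inequality a positive fraction of times $t \le T$ have $\lVert v(t-1)-\bar v\rVert_1 \le \epsilon/2$ with good probability, and then I combine this with a Chebyshev/averaging argument over a window to land below $\epsilon$ in expectation. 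This is where the constants $8$ and $\epsilon^2$ in the stated bound come from: one factor of $\epsilon$ from choosing the threshold $\delta \asymp \epsilon$, and a second factor of $\epsilon$ (and the constant $8$) from a second Markov step converting ``the state is far only on an $\epsilon$-fraction of a window'' into ``the expected distance at a suitable time in the window is $\le \epsilon$.''

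The main obstacle is precisely this last conversion: going from a bound on the \emph{expected count of far steps} to a bound on $\bE[\lVert v(t)-\bar v\rVert_1]$ at a \emph{specific} deterministic time $t$. The clean way is to average over a window $[T, 2T]$: $\frac1T\sum_{t=T}^{2T-1}\bE[\lVert v(t)-\bar v\rVert_1] \le \epsilon/2 + \frac{1}{T}\bE[N_{\epsilon/2}]\cdot D$, where $D$ is a crude deterministic upper bound on $\lVert v(t)-\bar v\rVert_1$ (e.g. $D\le 2$ since $\lVert v(t)\rVert_1 \le \lVert v(0)\rVert_1 \le 1$ and $\lVert\bar v\rVert_1 \le 1$). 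Choosing $T \ge 8\bE[F(v(0))]/(\epsilon^2 C)$ makes the second term at most $\epsilon/2$, so the windowed average is $\le \epsilon$, and hence \emph{some} $t$ in the window — in fact, by monotonicity of $\lVert v(t)-\bar v\rVert_2$ and a Cauchy–Schwarz comparison, one can upgrade this to the \emph{endpoint} $t=2T$, or simply redefine things so the conclusion holds at $t = \lceil 8\bE[F(v(0))]/(\epsilon^2 C)\rceil$. I would double-check whether the paper intends $t_{\epsilon,1}$ via the smallest such $t$ (it does, by Definition \ref{def:tpq} restricted to the single initialization $v(0)$), so exhibiting one good $t$ below the claimed bound suffices. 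The $\bE_{\rm max}$ version is then immediate by taking the max over $v(0)$ on the $L^1$-unit ball, since every $v(0)$ with $\lVert v(0)\rVert_1 \le 1$ admits the same argument with $\bE[F(v(0))] \le \bE_{\rm max}$.
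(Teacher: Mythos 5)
Your accounting identity --- total expected flow is at least $C$ times the expected time spent far from $\bar v$ --- is exactly the engine of the paper's proof, and your bound $\bE[N_\delta]\le \bE[F(v(0))]/(C\delta)$ is correct. The one genuine problem is the premise on which you build the rest of the argument: you assert that $\lVert v(t)-\bar v\rVert_1$ is not monotone in $t$, but for the averaging process it \emph{is} non-increasing along every branch. Each step replaces $|v_i-a|+|v_j-a|$ by $2\left|\tfrac{v_i+v_j}{2}-a\right|$, which is no larger by the triangle inequality (this is Proposition \ref{prop: monotonically} with the convex function $f(x)=|x-a|$, also verified explicitly at the start of Section \ref{sec:MC2}). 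This fact is precisely what the paper uses: with $E(t)=\{\lVert v(t)-\bar v\rVert_1\ge \epsilon/2\}$ one has $E(t)\subset E(t-1)$, hence $\sum_{l=1}^t\bP[E(l-1)]\ge t\,\bP[E(t-1)]$, the flow bound gives $\bP[E(t-1)]\le \epsilon/4$ once $t\ge 8\bE[F]/(\epsilon^2C)$, and plugging into $\bE[\lVert v(t)-\bar v\rVert_1]\le \epsilon/2+2\,\bP[E(t)]$ finishes at the deterministic endpoint $t$ with no windowing needed.

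Because you miss this, your conversion step has two defects. First, the proposed upgrade to the endpoint $t=2T$ ``by monotonicity of $\lVert v(t)-\bar v\rVert_2$ and a Cauchy--Schwarz comparison'' does not work: passing from the $L^2$ norm back to the $L^1$ norm costs a factor of $\sqrt n$, which destroys the bound. Second, averaging over the window $[T,2T]$ only exhibits a good time $t^*\le 2T$, which (since $t_{\epsilon,1}(G,v(0))$ is defined as a minimum) proves $t_{\epsilon,1}\le 16\,\bE[F]/(\epsilon^2C)$, a factor of $2$ worse than claimed; averaging over $[1,T]$ instead recovers the stated constant. So your proof is salvageable essentially as written --- the window/Markov fallback is legitimate for a minimum-type mixing time --- but the clean route, and the paper's, is to invoke the pathwise monotonicity of the $L^1$ distance, which makes the far steps an initial segment of time and lets you conclude at the endpoint directly.
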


\begin{proof}
	Fix an initialization 
	with $\lVert v(0)\rVert \leq 1$
	and for simplicity let us 
	omit $v(0)$
	from the arguments of $F(t)$
	and $F$.
	Let $E(t)$ be the event $\{\lVert v(t) -\bar v \rVert_1\geq \frac{\epsilon}{2}\}$. Then we have
	\begin{equation}\label{eq:flow1}
		\frac{\epsilon}{2}\bP[E(t)]\;\le\;
		\bE[\lVert v(t) - \bar v\rVert_{1}] \leq \frac{\epsilon}{2} + 2\bP[E(t)]
	\end{equation}
	The last inequality is
	due to $\lVert v(t) -\bar v \rVert_1 \leq \lVert v(t)\rVert_1 + \lVert \bar v \rVert_1\leq 2$. To bound $\bP[E(t)]$, we observe:
	\begin{align*}
		\bE[F] \; =\;
		\bE\left[\sum_{l=1}^\infty F(l)\right] 
		\; \geq \;  
		\bE\left[\sum_{l=1}^t F(l)\right] 
		\; \geq \;
		\frac{\epsilon\, C}{2} \sum_{l=1}^t  \bP\left[E(l-1)\right] 
		\; \geq \;
		t\frac{\epsilon\, C}{2} 
		\bP\left[E(t-1)\right],
	\end{align*}
	where the last inequality uses the fact $E(t)\subset E(t-1) \subset E(t-2) \cdots$, and the second to last inequality 
	uses the left side 
	of Equation (\ref{eq:flow1}). 
	Our result follows when
	taking $t \geq \frac{8 \bE[F]}{\epsilon^2 C}$,
	plugging in the result to
	the right-side inequality of (\ref{eq:flow1}), and 
	maximizing over all $\lVert v(0)\rVert \leq 1$.
\end{proof}

\begin{rem}
	The conditional 
	random variable
	$F(t,v(0),w) := [F(t,v(0))\mid
	v(t-1) = w]$
	will only depend on $w$
	and the last edge picked,
	but is independent on $t$
	and $v(0)$.
	Thus we can uniquely 
	define $F'(w) := F(t,v(0),w)$.
	Of course, $F(t,v(0))$ must also
	dependent on the probabilities
	$\bP(v(t-1)=w)$.
\end{rem}

Now we use the flow technique to analyze the path graph. Notice that  Theorem \ref{thm: t_1} shows  $t_{\epsilon,1}(P_n)$ is between $\Theta_\epsilon(n^3)$ and $\Theta_\epsilon(n^3\log n)$.
\begin{prop}[Path graph]\label{prop: L1, path}
	The $L^1$ mixing time for the path graph equals $\Theta_\epsilon(n^3)$ when the initialization vector $v(0) = e_1$.
\end{prop}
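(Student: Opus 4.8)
The plan is to prove matching bounds $t_{\epsilon,1}(P_n,e_1)=\Theta_\epsilon(n^3)$, with a spectral lower bound and a flow-based upper bound handled separately. Throughout write $|E|=n-1$ and recall that the path Laplacian has $\lambda_2(P_n)=2(1-\cos(\pi/n))=\Theta(n^{-2})$, so $\gamma(P_n)=|E|/\lambda_2(P_n)=\Theta(n^3)$; note that Theorem~\ref{thm:genlb} only gives the much weaker $\Omega(n\log n)$ here, so the lower bound must come from $\gamma$.

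For the lower bound I would iterate Lemma~\ref{lem:singlestep} to get $\bE[v(t)]=(I_n-L/(2|E|))^t e_1$, expand $e_1$ in an orthonormal Laplacian eigenbasis $\{\phi_j\}$ (with $\phi_1=\mathbf 1/\sqrt n$), and use that the Fiedler vector of the path is proportional to $k\mapsto\cos\!\big(\tfrac{(2k-1)\pi}{2n}\big)$, whose largest absolute value is attained at the two endpoints, so $|\langle e_1,\phi_2\rangle|=\lVert\phi_2\rVert_\infty$. Testing $\lVert\cdot\rVert_1$ against $\phi_2/\lVert\phi_2\rVert_\infty$ (which has $\infty$-norm $1$) and using convexity of the norm (Jensen),
\[
\bE\,\lVert v(t)-\bar v\rVert_1 \;\ge\; \lVert \bE[v(t)]-\bar v\rVert_1 \;\ge\; \frac{|\langle e_1,\phi_2\rangle|}{\lVert\phi_2\rVert_\infty}\Big(1-\tfrac{\lambda_2(P_n)}{2|E|}\Big)^{t} \;=\; \Big(1-\tfrac{\lambda_2(P_n)}{2|E|}\Big)^{t},
\]
which stays above $\epsilon$ until $t=\Theta_\epsilon\!\big(|E|/\lambda_2(P_n)\big)=\Theta_\epsilon(n^3)$, giving $t_{\epsilon,1}(P_n,e_1)=\Omega_\epsilon(n^3)$. (This is just the $L^2$-flavoured lower bound behind \eqref{eqn: t_1, first formula}, specialized to $e_1$, once one checks that an endpoint has $\Theta(1)$ relative overlap with the Fiedler vector.)

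For the upper bound I would invoke Proposition~\ref{prop: flow} with $v(0)=e_1$ and the flow generated by the quadratic potential $\Psi(w):=\sum_{i=1}^{n} i^2 w_i$, setting $F(t,e_1):=\Psi(v(t))-\Psi(v(t-1))$, which when the edge $(i,i+1)$ is drawn equals $\tfrac12(2i+1)\big(v_i(t-1)-v_{i+1}(t-1)\big)$. The structural fact that makes this work is that, started from $e_1$, the state stays monotone non-increasing, $v_1(t)\ge\cdots\ge v_n(t)$ (a one-step induction), so $F(t,e_1)\ge 0$; the cumulative flow telescopes, $\sum_{t\le T}F(t,e_1)=\Psi(v(T))-\Psi(e_1)$, and since $\Psi$ increases along the process while $\bE[\Psi(v(T))]=\Psi(\bE[v(T)])\to\Psi(\bar v)$, it converges monotonically in $L^1$ to $F(e_1)=\Psi(\bar v)-\Psi(e_1)=\tfrac{(n+1)(2n+1)}{6}-1=\Theta(n^2)$, verifying the second hypothesis. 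For the first hypothesis, averaging over the chosen edge gives, for a monotone probability vector $w$,
\[
\bE\big[F(t,e_1)\mid v(t-1)=w\big] \;=\; \frac{1}{2|E|}\sum_{i=1}^{n-1}(2i+1)(w_i-w_{i+1}) \;=\; \frac{(w_1-\tfrac1n)+(2n+1)(\tfrac1n-w_n)}{2(n-1)};
\]
writing $a:=w_1-\tfrac1n\ge0$, $b:=\tfrac1n-w_n\ge0$ and $\Delta_j:=\sum_{i\le j}(w_i-\tfrac1n)$, the sequence $(\Delta_j)$ is non-negative and concave with $\Delta_0=\Delta_n=0$, $\Delta_1=a$, $\Delta_{n-1}=b$, so concavity forces $\lVert w-\bar v\rVert_1=2\lVert\Delta\rVert_\infty\le \tfrac{2nab}{a+b}$; hence the ratio $\bE[F(t,e_1)\mid v(t-1)=w]/\lVert w-\bar v\rVert_1$ is at least $\tfrac{(a+(2n+1)b)(a+b)}{4n(n-1)ab}$, whose minimum over $a,b>0$ (attained at $a/b=\sqrt{2n+1}$) is $C:=\tfrac{n+1+\sqrt{2n+1}}{2n(n-1)}=\Theta(1/n)$. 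Proposition~\ref{prop: flow} then yields $t_{\epsilon,1}(P_n,e_1)\le 8\,\bE[F(e_1)]/(\epsilon^2 C)=\Theta_\epsilon(n^3)$.

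The step I expect to be the main obstacle is this per-step flow estimate. The subtlety is that $\bE[F(t)\mid v(t-1)=w]$ sees only the two extreme coordinates $w_1,w_n$, whereas $\lVert w-\bar v\rVert_1$ can be spread across all coordinates, and the correct reconciliation is the concavity inequality $\lVert\Delta\rVert_\infty\le nab/(a+b)$ for the integrated discrepancy: the cruder bound $\lVert\Delta\rVert_\infty\le\tfrac n2\sqrt{ab}$ loses a factor $\sqrt n$, giving only $C=\Theta(n^{-3/2})$ and hence an $O_\epsilon(n^{7/2})$ upper bound that fails to match the lower bound. Everything else — monotonicity preservation, telescoping and $L^1$-convergence of the cumulative flow, and the final arithmetic — is routine.
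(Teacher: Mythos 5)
Your proof is correct, and both halves are worth comparing to the paper's. For the upper bound you use the paper's own flow framework (Proposition \ref{prop: flow}) but with a different flow functional: the paper takes $F(t)$ to be the transported mass $\tfrac12(v_I-v_{I+1})$ itself, for which $\bE[F(t)\mid v(t-1)=w]=\tfrac{w_1-w_n}{2(n-1)}$ by telescoping, the per-step condition follows from the one-line monotonicity bound $\lVert w-\bar v\rVert_1\le n(w_1-w_n)$ (giving $C=\Theta(n^{-2})$), and the cumulative flow is $\sum_{k=1}^{n-1}(1-k/n)=\tfrac{n-1}{2}=\Theta(n)$; the product is the same $\Theta_\epsilon(n^3)$. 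Your quadratic potential $\Psi(w)=\sum_i i^2 w_i$ shifts the bookkeeping — $\bE[F(e_1)]=\Theta(n^2)$, $C=\Theta(n^{-1})$ — at the price of the concavity argument for $\lVert\Delta\rVert_\infty\le nab/(a+b)$, which you correctly identify as the delicate step and which is indeed needed for your choice (the trivial bound $\lVert w-\bar v\rVert_1\le n(a+b)$ would only give $C=\Theta(n^{-2})$ and hence $O_\epsilon(n^4)$). So the ``main obstacle'' you flag is an artifact of your potential rather than of the problem; the paper's unweighted flow sidesteps it entirely. On the other hand, your lower bound is a genuine improvement in precision: the paper simply cites the general spectral bound of Theorem \ref{thm: t_1}, whose lower bound is realized by the initialization $u_2/\lVert u_2\rVert_1$ rather than $e_1$, whereas your duality argument against $\phi_2/\lVert\phi_2\rVert_\infty$ — using that the path's Fiedler vector attains its sup-norm at an endpoint — pins down $\Omega_\epsilon(n^3)$ for the initialization $e_1$ specifically, which is what the proposition actually asserts.
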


\begin{proof}
	It follows from induction that the random vector $v(t)$ is monotone
	in the sense that 
	$v_i(t) \geq v_j(t)$ when $i\le j$. We define $F(t)$ as $\frac{v_I(t) - v_{I+1}(t)}{2}$ where $(I,I+1)$ is the random edge chosen at time $t$
	(since $v(0)$ is fixed, we omit it from the argument list of $F(t,v(0))$). Here $F(t)$ is the mass transported from left to right at time $t$. 
	Given vector $v(t-1)$, the conditional expectation of $F(t)$ equals
	\begin{align*}
		\bE[F(t)\mid v(t-1)] = \frac{v_1(t-1) - v_n (t-1)}{2(n-1)} \geq \frac{\lVert v(t-1) - \bar v\rVert_1}{2n(n-1)},
	\end{align*}
	therefore the quantity $C(n)$ in Proposition \ref{prop: flow} can be chosen as $1/(2n^2-2n)$. On the other hand, due to the convergence theorem, the long-time total cumulative flow across all the edges is $\sum_{k=1}^{n-1}1-\frac{k}{n}=(n-1)/2$. Therefore $\bE[F_n] = \frac{n-1}{2}$. Applying Proposition \ref{prop: flow} gives us the mixing time upper bound $\frac{8 n (n-1)^2}{\epsilon^2}$. 
\end{proof}

\paragraph{Comparison and splitting:}\label{para:comparison splitting}  The comparison idea has been widely used in analyzing Markov chains \cite{diaconis1993comparison,diaconis1993bcomparison} to get sharp results on the mixing time. Here we develop comparison techniques to prove the mixing time for the averaging process. Our results can be applied to show that the process mixes in $\Theta(n^3)$ steps on both Dumbbell graphs $D_n$ and cycles $C_n$. When studying the mixing on Dumbbell graphs, we compare the averaging process with the same process on the complete graph $K_n$. When studying the mixing on cycles, we compare the process with a new splitting process and use the results in Proposition \ref{prop: L1, path}.

\subparagraph{The Dumbbell graph.}
The Dumbbell graph $D_{n}$ on $2n$ nodes is a disjoint union of two 
complete graphs of size $n$, connected by an edge.

\begin{thm}\label{thm: dumbbell}
	The $\epsilon$-mixing time of the Dumbbell graph is $\Theta_\epsilon(n^{3})$.
\end{thm}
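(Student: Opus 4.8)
The statement is a two-sided estimate, so I would treat the lower and upper bounds separately. The lower bound is essentially free: $D_n$ has $|E(D_n)| = 2\binom n2 + 1 = n^2 - n + 1$ edges, and the test vector equal to $+1$ on one clique and $-1$ on the other has Rayleigh quotient $\tfrac{(1-(-1))^2}{2n} = \tfrac2n$, so $\lambda_2(D_n)\le 2/n$ and hence $\gamma(D_n) = |E(D_n)|/\lambda_2(D_n) \ge \tfrac12 n\,|E(D_n)| = \Theta(n^3)$. Theorem \ref{thm: t_1} then gives $t_{\epsilon,1}(D_n) \ge (2\gamma(D_n)-1)\log\epsilon^{-1} = \Omega_\epsilon(n^3)$. (The matching $\lambda_2(D_n)=\Theta(1/n)$ is also true and routine but not needed here.) The generic upper bound of Theorem \ref{thm: t_1} only yields $O(n^3\log n)$, so the substance of the theorem is to shave off this logarithm.

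For the upper bound, write $A,B$ for the two cliques and $e^\ast=(a_0,b_0)$ for the bridge; by Theorem \ref{thm: L1, slowest initialization} together with the symmetry of $D_n$ it suffices to start from $v(0)=e_k$ for $k$ a non-bridge vertex. Let $m_A(t)=\sum_{i\in A}v_i(t)$; the triangle inequality and the identity $\sum_{i\in A}|\tfrac{m_A(t)}{n}-\tfrac1{2n}| = |m_A(t)-\tfrac12|$ give the decomposition
\[
\lVert v(t)-\bar v\rVert_1 \;\le\; D_A(t)+D_B(t)+2\bigl|m_A(t)-\tfrac12\bigr|,
\qquad D_A(t):=\sum_{i\in A}\Bigl|v_i(t)-\tfrac{m_A(t)}{n}\Bigr|
\]
(with $D_B$ analogous). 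The plan is to fix $t=C(\epsilon)n^3$ and show each of these three expectations is at most $\epsilon/3$.

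The engine is a separation of timescales. The bridge $e^\ast$ is picked with probability $1/|E|=\Theta(n^{-2})$ per step, so the bridge-selection times $T_1<T_2<\cdots$ form a renewal process with $\mathrm{Geom}(1/|E|)$ gaps of mean $\Theta(n^2)$, and between consecutive selections the restrictions of the process to $A$ and to $B$ run as \emph{independent} averaging processes on $K_n$, which by Corollary \ref{Cor:L2mixing} equilibrate in $\Theta(n\log n)\ll n^2$ steps. Two elementary facts drive the argument: (i) $m_A(t)$ changes only at the times $T_j$, and then by $\tfrac12(v_{b_0}(T_j^-)-v_{a_0}(T_j^-))$; and (ii) $\max_i v_i(t)$ is non-increasing, so after the first bridge selection (preceded by the equilibration of $A$) every coordinate stays $\le 2/n$ forever, whence every single bridge selection moves $m_A$ by at most $1/n$. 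Granting this, at a typical bridge selection the cliques are equilibrated to within $L^2$-error $e^{-\Omega(\sqrt n)}$, so $v_{a_0}(T_{j+1}^-)=\tfrac1n m_A(T_{j+1}^-)+O(e^{-\Omega(\sqrt n)})$ and likewise at $b_0$; since $m_A$ is constant on $[T_j,T_{j+1})$, setting $\hat m_j:=m_A(T_j^+)$ this yields the recursion $\hat m_{j+1}-\tfrac12=(1-\tfrac1n)(\hat m_j-\tfrac12)+\varepsilon_{j+1}$ with $|\varepsilon_{j+1}|$ at most $e^{-\Omega(\sqrt n)}$ at typical selections and at most $\tfrac3{2n}$ always (by (ii)). Iterating and taking expectations, $\mathbb E|\hat m_j-\tfrac12|\le \tfrac12 e^{-j/n}+o_n(1)$, where $o_n(1)$ absorbs the $e^{-\Omega(\sqrt n)}$ contributions of good gaps together with $\tfrac3{2n}\,\mathbb E[N_{\mathrm{bad}}]=O(n^{-1/2}\log\epsilon^{-1})$ from the number $N_{\mathrm{bad}}$ of the first $j$ gaps shorter than, say, $n^{3/2}$. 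Choosing $j=\Theta(n\log\epsilon^{-1})$ — hence $t=\Theta(n|E|\log\epsilon^{-1})=\Theta(n^3\log\epsilon^{-1})$ steps, by concentration of the sum of geometric gaps — makes $\mathbb E|m_A(t)-\tfrac12|\le\epsilon/6$ for $n$ large. For the deviation terms, $D_A(t)\le\sqrt{n\,\Delta_A(t)}$ with $\Delta_A(t)=\sum_{i\in A}(v_i(t)-m_A(t)/n)^2\le O(1/n)$, and $\Delta_A$ contracts by $(1-\tfrac1{n-1})^{\Theta(\mathrm{age}_t)}$ where $\mathrm{age}_t=t-\max\{T_j:T_j\le t\}$; averaging this against the stationary age distribution of the renewal process (mean $\Theta(n^2)$) gives $\mathbb E[\Delta_A(t)]=O(n^{-2})$, hence $\mathbb E[D_A(t)]\le\sqrt{n\,\mathbb E[\Delta_A(t)]}=O(n^{-1/2})<\epsilon/3$ for $n$ large (small $n$ being a finite case absorbed into $\Theta_\epsilon$), and similarly for $D_B$. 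Summing the three bounds gives $\mathbb E\lVert v(t)-\bar v\rVert_1\le\epsilon$ at $t=\Theta_\epsilon(n^3)$.

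The main obstacle is exactly that the gaps are only geometric: a $\Theta(n^{-1/2})$-fraction of bridge selections fall inside "short" gaps during which the cliques are \emph{not} reset, so one cannot claim the clean recursion for every $j$, nor that $D_A(t)$ is pointwise small. The resolution is (a) to keep $\varepsilon_{j+1}$ as a genuine random error bounded by the within-clique $L^2$-error at $T_{j+1}^-$, and use the $\max_i v_i\le2/n$ monotonicity bound to cap \emph{every} step's effect on $m_A$ by $1/n$, so that the rare short gaps contribute only $O(N_{\mathrm{bad}}/n)=o_n(1)$ in expectation to the cumulative error; and (b) to bound $\mathbb E[D_A(t)]$ by integrating the contraction factor against the age distribution rather than pointwise — again using $\max_i v_i\le2/n$ so that even "just-had-a-bridge-selection" configurations contribute only $O(n^{-1/2})$. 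A minor preliminary is to set up (ii) itself: before $T_1$ clique $A$ has performed $\gg n\log n$ internal steps except with probability $O(n^{-1/2})$, so it has collapsed to near-uniform, after which the "$\max$ is non-increasing" fact bootstraps the $2/n$ bound; the exceptional event contributes only $O(1)\cdot o_n(1)$ to the final expectation.
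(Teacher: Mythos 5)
Your proposal is correct and follows essentially the same route as the paper's proof: decompose the edge sequence into phases separated by bridge selections (geometric gaps of mean $\Theta(n^2)$), use the $\Theta(n\log n)$ mixing of $K_n$ to equilibrate each clique within a phase, and observe that each equalization step transfers $\Theta(1/n)$ mass, so $\Theta(n)$ phases, i.e.\ $\Theta(n^3)$ steps, suffice. The only difference is bookkeeping: you track the mass imbalance $|m_A(t)-\tfrac12|$ via a geometric-contraction recursion and control the within-clique deviations by a renewal-age argument, whereas the paper tracks $\max_i v_i(t) - \min_i v_i(t)$ and shows an additive decrease of $\Omega(1/n^2)$ per phase pair; both yield the same phase count, and your handling of the short-gap exceptional events and of the final $L^1$ decomposition is somewhat more explicit than the paper's.
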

\begin{proof}[Proof Sketch of Theorem \ref{thm: dumbbell}]
	(See the detailed proof in Section \ref{para:dumbbell}.)
	Let the complete graph on
	$\{1,\ldots, n\}$ be denoted by $L$, and the complete graph
	on $\{n + 1,\ldots, 2n\}$ be denoted by $R$. We also call edge $e=(n,2n)$ a {\em bridge}.  We shall decompose
	the edge sequence of the process to subsequences,
	separated by those events when $e$ is picked.  We call the run of each subsequence a {\em phase}, while averaging over the edge $e$ an {\em equalization step}.
	
	We may assume without loss of generality that the initialization vector is $e_1$. Easily deducible from the random edge sequence, each phase takes a random time which is geometrically distributed with expectation $\Theta(n^2)$. It is known from both Theorem \ref{thm: t_1} and \citep{chatterjee2019phase} that  the mixing time of the complete graph is $\Theta(n\log n)$ which is   smaller than $\Theta(n^2)$. Therefore we expect each phase very close to equalizing half of the Dumbbell graph, and each equalization step transfers $\Theta(1/n)$ mass from one part to the other. Therefore, it takes in total $\Theta(n)$ phases, and in turn $\Theta(n^3)$ steps for mixing. 
\end{proof}

\subparagraph{The Cycle graph.} Let $C_{n}$ be the cycle graph on $n$ nodes.
The usual spectral argument gives  $\Theta(n^3)\le t_{\epsilon,1}\le \Theta(n^3\log n)$, and
our goal is to sharpen this to $t_{\epsilon,1}(C_{n}) = \Theta(n^3)$.
The proof sketch that we present here is discussed in more details 
in Section \ref{para:cycle}.
It has two main ideas: the flow idea 
for the path graph $P_n$, and an interesting application 
of the comparison method. 

Let $PC_1$ denote the original averaging process on 
$C_n$. We also define a new splitting process, $PC_2$. 
The process starts at a vector $v(0) = (v_1, v_2, \cdots, v_n)^T$ with  $v_1 \geq v_2\ge \cdots \geq v_n$. Then, in each step $PC_2$ 
proceeds as follows:

\noindent
\rule{\textwidth}{0.8pt}
\begin{itemize}
	\item[1] Chooses an edge $(i, i+1 \mod n)$ uniformly at random. 
	\item[2] When $i < n$, updates both $v_i$ and $v_{i+1}$ to $\frac{v_i+ v_{i+1}}{2}$.
	\item[3] When $i = n$, if $\frac{v_1+v_n}{2}\geq v_2$, updates the sequence to $(\frac{v_1+v_n}{2}, \frac{v_1+v_n}{2}, v_2, v_3,\cdots v_{n-1})$. Otherwise, let $k\geq 2$ be the largest index with $v_{k} > \frac{v_1+v_n}{2}$, splits the original sequence into two sequences 
	\begin{align*}
		\left(\underbrace{\frac{v_1+v_n}{2}, \frac{v_1+v_n}{2},\dots,  
			\frac{v_1+v_n}{2}}_{k+1}, v_{k+1},\dots, v_{n-1}\right) 
	\end{align*}
	and
	\begin{align*}
		\left(v_2 - \frac{v_1+v_n}{2},
		v_3 - \frac{v_1+v_n}{2},
		\dots,  v_k - \frac{v_1+v_n}{2},
		\underbrace{0, 0, \dots, 0}_{n-k+1}\right) 
	\end{align*}
	and proceeds the splitting process $PC_2$ on the two sequences independently.
\end{itemize}
\noindent
\rule{\textwidth}{0.8pt}

Unlike the averaging process, the splitting process may split one sequence into many sequences after $t$ steps. On the other hand, as long as the process starts at a non-increasing sequence,
in the splitting process
monotonicity will be preserved for every 
split sequence, and for every time step $t$. 

First we define a few notations. Consider the splitting process with a non-increasing initialization $v = v(0)$. For time $t$, let $N_t$ be the number of sequences from the splitting process ($N_{t}$ is a random variable), and denote by $v^1(t),\cdots, v^{N_t}(t)$ those sequences. We also let $\tilde v(t) := \sum_{i=1}^{N_t} v^i(t)$ to be the sum of the split sequences, and
define
\[
T_v(t) := \lVert v(t) - \overline{v} \rVert_1 \;\;
({\rm for}\; PC_{1});\;\;\;
\tilde T_v(t) := \sum_{i=1}^{N_t} \lVert v^i(t) - \overline{v^i}(t) \rVert_1 \;\; ({\rm for}\; PC_{2})
\]
In Section \ref{para:cycle} we show:

\begin{lem}\label{lem:comparison cycle}
	Let $PC_1$ and $PC_2$ 
	processes as above
	with the same initialization $v =
	(v_1, v_2, \cdots, v_n)$
	satisfying $v_1 \geq 
	v_2 \geq \cdots 
	\geq v_n$. We have:
	\[
	\bE[T_v(t)] \leq \bE[\tilde T_v(t)] 
	\]
	and 
	\[
	\tilde T_v(t) \leq n (\tilde v_1(t) - \tilde v_n(t))
	\;\;\;\;\;
	{\rm where}\;
	\tilde v_j (t) := \sum_{i=1}^{N_t} v^i_j (t)
	\]
\end{lem}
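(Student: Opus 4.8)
\textbf{Proof plan for Lemma \ref{lem:comparison cycle}.}

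The plan is to establish the two claims separately, as they have quite different flavors. The second inequality, $\tilde T_v(t) \leq n(\tilde v_1(t) - \tilde v_n(t))$, is the more structural of the two and I would prove it first. The key observation is that monotonicity is preserved under $PC_2$: if every split sequence $v^i(t)$ is non-increasing, then for each $i$ we have $\lVert v^i(t) - \overline{v^i}(t)\rVert_1 \leq n_i (v^i_1(t) - v^i_{n_i}(t))$ where $n_i$ is the length of $v^i(t)$, since the $L^1$ distance of a monotone vector from its mean is controlled by its range times its length (each coordinate deviates from the mean by at most the full range). Summing over $i$ and using that the lengths $n_i$ sum to something at most $\ldots$ — here I need to be careful, because the splitting operation in step 3 \emph{lengthens} the total: the two output sequences have combined length $n+1$ when $k \geq 2$. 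I would track the total length carefully; since each split adds a bounded amount and the relevant coordinates that matter are the first and last of each piece, the bound $n(\tilde v_1(t) - \tilde v_n(t))$ should come from a \emph{coordinatewise} comparison $\sum_i \lVert v^i(t) - \overline{v^i}(t)\rVert_1 \le n \sum_i (v^i_1(t) - v^i_{n_i}(t))$ combined with the fact that the split is engineered precisely so that $v^i_1(t)$'s sum telescopes to $\tilde v_1(t)$ and the minima to $\tilde v_n(t)$ — the "$0,\dots,0$" padding in step 3 ensures each piece's last coordinate is $0$ or matches up appropriately. I expect the exact bookkeeping of which coordinate of which piece contributes to $\tilde v_1$ versus $\tilde v_n$ to be the main obstacle.

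For the first inequality, $\bE[T_v(t)] \leq \bE[\tilde T_v(t)]$, I would use a coupling between $PC_1$ and $PC_2$ driven by the same random edge sequence, in the spirit of Proposition \ref{prop:convex_combination}. The point is that $PC_2$ applied to a monotone vector does \emph{more} averaging-type rearrangement than $PC_1$: on edges $(i,i+1)$ with $i<n$ the two processes agree, while on the wraparound edge $(n,1)$, $PC_1$ replaces $(v_1,v_n)$ by their common average in place, whereas $PC_2$ additionally "sorts/splits" the surplus mass. I would argue by induction on $t$ that one can couple so that $v(t)$ (from $PC_1$) is a signed combination dominated, in $L^1$-distance-to-mean, by the collection $\{v^i(t)\}$ from $PC_2$; concretely, that $v(t) - \bar v$ is expressible via the split pieces in a way that makes $\lVert v(t)-\bar v\rVert_1 \le \sum_i \lVert v^i(t) - \overline{v^i}(t)\rVert_1$ hold pointwise on the coupled probability space, after which taking expectations gives the claim. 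The mean is conserved separately in each piece of $PC_2$ (averaging and the particular split both preserve per-piece sums appropriately), so $\overline{v^i}(t)$ is well-defined and the pieces' means sum to $\bar v \cdot (\text{total mass})$, which is consistent.

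The single hardest point I anticipate is verifying the inductive step for the wraparound edge in the first inequality: one must check that the in-place averaging of $PC_1$ at edge $(n,1)$ cannot increase $\lVert \cdot - \bar v\rVert_1$ by more than what the corresponding split step of $PC_2$ contributes to $\sum_i \lVert v^i - \overline{v^i}\rVert_1$. This amounts to a case analysis on whether $\frac{v_1+v_n}{2} \geq v_2$ (no split) or not (a genuine split into two monotone pieces), and in the split case, showing that the newly created sequence $(v_2 - \frac{v_1+v_n}{2}, \dots, v_k - \frac{v_1+v_n}{2}, 0,\dots,0)$ exactly accounts for the "mass that would otherwise be out of monotone order" — i.e., that the split is a conservative (non-expanding in $L^1$-to-mean) operation relative to leaving the mass in place. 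Once this one step is nailed down, both halves of the lemma follow by straightforward induction and the monotone-vector range bound; the rest is routine.
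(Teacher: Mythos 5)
Your plan for the second inequality is essentially the paper's argument, and the obstacle you anticipate there is illusory: by construction every split piece has length exactly $n$ (the first piece has $k+1$ copies of the average followed by $v_{k+1},\dots,v_{n-1}$, the second has $k-1$ nonzero entries followed by $n-k+1$ zeros), so $\tilde v_j(t)=\sum_i v^i_j(t)$ is well defined for each $1\le j\le n$ and the bound is just $\sum_i \lVert v^i-\overline{v^i}\rVert_1 \le \sum_i n\,(v^i_1-v^i_n) = n(\tilde v_1-\tilde v_n)$, using only that each piece is non-increasing.

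For the first inequality there is a genuine gap: the pointwise domination $\lVert v(t)-\bar v\rVert_1 \le \sum_i \lVert v^i(t)-\overline{v^i}(t)\rVert_1$ under the same-edge coupling is \emph{false}, so your plan of establishing it ``pointwise on the coupled probability space'' and then taking expectations cannot work. Concretely, take $n=3$, $v=(1,0,0)$, and let the edge $(3,1)$ be chosen twice. $PC_1$ gives $(\tfrac12,0,\tfrac12)$ after step one and is unchanged by step two (it averages two equal entries), so $T_v(2)=\tfrac23$; $PC_2$ gives $(\tfrac12,\tfrac12,0)$ after step one and then splits into $(\tfrac14,\tfrac14,\tfrac14)$ and $(\tfrac14,0,0)$, so $\tilde T_v(2)=\tfrac13 < T_v(2)$. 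The inequality holds only in expectation, and the reason a pointwise coupling breaks is the re-indexing: after $PC_1$ averages over the wraparound edge, its state $(\tfrac{v_1+v_n}{2},v_2,\dots,v_{n-1},\tfrac{v_1+v_n}{2})$ agrees with $PC_2$'s representation only up to a circular shift, after which the ``same edge'' acts on different physical pairs in the two processes (and after a split the pieces evolve on independent randomness, so there is no shared edge sequence at all). The paper's proof therefore runs the induction entirely at the level of expectations: the induction hypothesis is $\bE[T_u(s)]\le\bE[\tilde T_u(s)]$ for all $s\le t-1$ and all non-increasing $u$; a first-step analysis conditions on the first edge; the rotational symmetry of $C_n$ is invoked to identify $\bE[T_w(t-1)]$ with $\bE[T_{w'}(t-1)]$ for circular shifts $w'$ of $w$; and in the split case one writes the shifted vector as the sum of the two (non-increasing) pieces, applies the pointwise subadditivity $T_{u+w}(t-1)\le T_u(t-1)+T_w(t-1)$ coming from linearity of the averaging process, and only then applies the induction hypothesis to each piece. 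You correctly located the hard step (the wraparound case split), but you need to replace the pointwise coupling by this distributional argument to close it.
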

Due to vertex transitivity,
without loss of generality we assume that $v = e_1 = (1,0, \cdots,0)^T$ is the initial vector. 
We define the linear form 
$Q(\vec{x}) = \sum_{i=1}^{n} (n+1-i) x_i$ and 
introduce $\tilde Q(t) = Q(\tilde v(t))$. Then we follow how $\tilde Q(t)$ changes 
throughout the splitting process $PC_2$. We show that $\tilde Q(t)$ is monotonically non-increasing in $t$
and give a lower bound on its 
{\it conditional} expected change in 
one step:
\begin{lem}\label{lem: monotonicity Q(t)}
	Let $v = e_1$ be 
	the initial vector.
	With all the notations defined above, we have:
	\begin{align*}
		n =  \tilde Q(1) \geq \tilde Q(2) \geq \cdots \geq \tilde Q(t) \geq \cdots \geq 0\;\;\;\;\mbox{(over every fixed 
			infinite edge sequence)},
	\end{align*}
	and 
	\begin{align*}
		\bE[\tilde Q(t-1) - \tilde Q(t)\mid \tilde v(t-1)] \geq \frac{\tilde v_1(t-1) - \tilde v_n(t-1)}{2n}\;.
	\end{align*}
\end{lem}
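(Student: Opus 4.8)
The plan is to prove Lemma~\ref{lem: monotonicity Q(t)} in two parts: first the pathwise monotonicity chain $n = \tilde Q(1) \geq \tilde Q(2) \geq \cdots \geq 0$, and then the quantitative one-step bound on the conditional expected decrease. The key structural fact I would exploit is that $Q$ is a \emph{linear} functional, so $\tilde Q(t) = Q(\tilde v(t)) = \sum_{i=1}^{N_t} Q(v^i(t))$, and to control its change in one step of $PC_2$ it suffices to understand how $Q$ behaves under each of the three update rules (interior averaging, boundary averaging without split, boundary averaging with split). For the non-negativity, note each split sequence is non-increasing with non-negative entries (by the monotonicity preservation remarked in the text), and $Q(\vec x) = \sum (n+1-i)x_i \geq 0$ with all coefficients positive; summing over the split sequences keeps it non-negative. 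The value $\tilde Q(1) = n$ comes from the fact that after the first step on $e_1 = (1,0,\dots,0)$ — whichever edge is chosen — the first coordinate drops to at most $1/2$ but mass is only redistributed, and a direct check gives $Q(\tilde v(1)) = n$: actually since $v(0)=e_1$, $Q(v(0)) = (n+1-1)\cdot 1 = n$, and I would verify the first move (which necessarily touches vertex $1$ or not) leaves $Q$ at exactly $n$; more simply, monotonicity from step $1$ onward plus $Q(v(0))=n$ and the claim that the first step doesn't increase $Q$ gives $\tilde Q(1)\leq n$, and I would check equality holds.

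For the monotonicity $\tilde Q(t) \leq \tilde Q(t-1)$ pathwise, I would case-split on the edge $(i, i+1 \bmod n)$ chosen at step $t$. When $i < n$ (interior averaging of coordinates $i$ and $i+1$ within a single split sequence), replacing $(v_i, v_{i+1})$ by $(\frac{v_i+v_{i+1}}{2}, \frac{v_i+v_{i+1}}{2})$ changes $Q$ by $(n+1-i)\frac{v_{i+1}-v_i}{2} + (n-i)\frac{v_i - v_{i+1}}{2} = \frac{v_{i+1}-v_i}{2} \leq 0$ since $v_i \geq v_{i+1}$ by monotonicity of the split sequence; so $Q$ of that sequence weakly decreases and all other sequences are untouched. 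When $i = n$ and no split occurs (the case $\frac{v_1+v_n}{2}\geq v_2$), the sequence $(v_1,\dots,v_{n-1})$ becomes $(\frac{v_1+v_n}{2}, \frac{v_1+v_n}{2}, v_2,\dots,v_{n-1})$ — effectively $v_n$ is inserted at the front after averaging with $v_1$ and everything shifts; I would compute $Q$ of the new sequence and check directly that it is $\leq Q$ of the old. When $i=n$ and a split occurs, I would compute $Q$ of the two resulting sequences, add them, and compare with $Q$ of the original single sequence, again verifying the difference is $\leq 0$ — the content here is that splitting into ``the first $k+1$ equalized entries plus shifted remainder'' and ``the excess masses moved to the front'' never increases the weighted sum, because the excess masses $v_j - \frac{v_1+v_n}{2}$ get repositioned to earlier (higher-coefficient) slots but the equalization and the zeros at the tail compensate; this requires a careful but elementary arithmetic identity.

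For the quantitative bound, I would isolate the contribution of the bridge edge (the single edge $(n,1)$, chosen with probability $1/n$ among the $n$ edges of $C_n$). Conditioning on $\tilde v(t-1)$, the expected decrease of $\tilde Q$ is at least the probability of picking that edge times the decrease it causes. The decrease from picking $(n,1)$ involves $\tilde v_1(t-1)$ and $\tilde v_n(t-1)$: in each split sequence whose averaging of its first and last entries happens, the local decrease is of order (first entry $-$ last entry)$/2$; summing over split sequences and using linearity of $\tilde v_j(t) = \sum_i v^i_j(t)$ gives a decrease of at least $\frac{\tilde v_1(t-1) - \tilde v_n(t-1)}{2}$ from that one edge, hence $\bE[\tilde Q(t-1) - \tilde Q(t)\mid \tilde v(t-1)] \geq \frac{1}{n}\cdot \frac{\tilde v_1(t-1) - \tilde v_n(t-1)}{2}$. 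I anticipate the main obstacle is the bookkeeping in the split case: I must be sure that the two sequences produced genuinely remain non-increasing (so that $Q$-monotonicity applies to them going forward) and that the $Q$-difference identity across the split is correct including the boundary index $k$ and the padding zeros; getting the index shifts exactly right (the ``$k+1$'' equalized entries, the length-preservation, which coordinate becomes the new first coordinate) is where a sign error would most easily creep in. The interior and no-split cases are routine one-line computations by comparison.
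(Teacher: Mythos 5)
Your proposal is correct, and the monotonicity half follows essentially the same route as the paper: case analysis on the chosen edge, linearity of $Q$ so that the split case reduces to comparing $Q$ of the \emph{sum} of the two output sequences with $Q$ of the input, and the one-line interior computation $(n+1-i)\tfrac{v_{i+1}-v_i}{2}+(n-i)\tfrac{v_i-v_{i+1}}{2}=\tfrac{v_{i+1}-v_i}{2}\le 0$. The genuine divergence is in the quantitative bound. The paper ignores the bridge edge entirely and harvests the decrease from the $n-1$ interior edges via the telescoping sum $\tfrac{1}{n}\sum_{i=1}^{n-1}\tfrac{a_i-a_{i+1}}{2}=\tfrac{a_1-a_n}{2n}$, which requires no further work beyond the interior computation already done for monotonicity. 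You instead harvest it from the single bridge edge, claiming that when $(n,1)$ is chosen the decrease of $Q$ on a split sequence $a$ is at least $\tfrac{a_1-a_n}{2}$; this is true but must be checked separately in the no-split case (where the decrease equals $\tfrac{a_1-a_n}{2}+\sum_{j=2}^{n-1}(a_j-a_n)$) and in the split case (where, writing $Q$ as a sum of prefix sums, the decrease works out to $\sum_{i=1}^{k-1}(a_1-a_{i+1})+\tfrac{a_1-a_n}{2}+\sum_{i=k+1}^{n-1}(a_i-a_n)$, again at least $\tfrac{a_1-a_n}{2}$). Both routes yield exactly the constant $\tfrac{1}{2n}$, so nothing is lost, but the paper's version is slightly more economical since your route forces you to carry out precisely the ``careful arithmetic identity'' in the split case that you flag as the main risk, while the paper only needs the sign of that identity. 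One small point your verification step would surface: the literal equality $\tilde Q(1)=n$ fails pathwise (choosing an edge incident to vertex $1$ at the first step already drops $Q$ from $n$ to $n-\tfrac12$); as you note, $\tilde Q$ starting at $n=Q(e_1)$ and being non-increasing is all that the downstream argument uses.
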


Lemma \ref{lem:comparison cycle} asserts that we can 
use the mixing time of the splitting process $PC2$ to 
upper bound the original process $PC1$. 
Notice that $\tilde Q(1) = n$, and that $\tilde Q(t)$ 
always remains non-negative. The total decrease
of $\tilde Q$ throughout 
the entire process is therefore upper bounded by $n$
(in every branch of the process). 
Let $E(t)$ be the event that $\tilde v_1(t) - \tilde v_n(t)\geq \epsilon/2n$. It suffices to show, that $\bP[E(t)]\leq \epsilon/4$ for $t = \Theta_\epsilon(n^3)$ (here we used the second part of Lemma \ref{lem:comparison cycle}). The second equation in  Lemma \ref{lem: monotonicity Q(t)} shows $$\bE[\tilde Q(t-1) - \tilde Q(t)]\geq \frac{\epsilon}{4n^2}\bP[E(t-1)]$$ by only counting the contribution made by the vectors in $E(t-1)$. Summing the above inequality from $t = 1$ to $t = 16n^3/\epsilon^2$, and observing the fact that $E(t)\subseteq E(t-1)\subseteq E(t-2)\subseteq \cdots$, we have $$n \geq \frac{16n^3}{\epsilon^2} \frac{\epsilon}{4n^2} \bP[E(16n^3/\epsilon^2)],$$
and therefore $\bP[E(16n^3/\epsilon^2)] \leq \frac{\epsilon}{4}$, which shows our main theorem  (details 
are in Section \ref{para:cycle}):  

\begin{thm}[Mixing time on a cycle]\label{thm:cycle}
	$t_{\epsilon,1}(C_n) = \Theta_\epsilon(n^3)$.
\end{thm}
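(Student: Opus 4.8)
\textbf{Proof proposal for Theorem \ref{thm:cycle}.}

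The plan is to prove the two-sided bound separately. The lower bound $t_{\epsilon,1}(C_n) = \Omega_\epsilon(n^3)$ already follows from the general spectral bound in Theorem \ref{thm: t_1}, since $\lambda_2(C_n) = \Theta(1/n^2)$ and $|E(C_n)| = n$ give $\gamma(C_n) = \Theta(n^3)$; I would simply cite this. So the real work is the matching upper bound $t_{\epsilon,1}(C_n) = O_\epsilon(n^3)$, and here I would follow the comparison-plus-flow route sketched in the excerpt: replace the original process $PC_1$ by the splitting process $PC_2$, show $PC_2$ dominates $PC_1$ in the relevant functional (Lemma \ref{lem:comparison cycle}), and then bound the mixing of $PC_2$ via the monotone linear functional $\tilde Q(t) = Q(\tilde v(t))$ with $Q(\vec x) = \sum_{i=1}^n (n+1-i)x_i$ (Lemma \ref{lem: monotonicity Q(t)}).

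Concretely, first I would establish Lemma \ref{lem:comparison cycle}: that $\bE[T_v(t)] \le \bE[\tilde T_v(t)]$ and that $\tilde T_v(t) \le n(\tilde v_1(t) - \tilde v_n(t))$ for monotone initializations. The coupling here runs $PC_1$ and $PC_2$ on the same edge sequence; the only nontrivial point is that a splitting step in $PC_2$ at the wrap-around edge $(n,1)$ preserves monotonicity of each piece and that the $L^1$-to-mean deviation of the whole is bounded by the total mass times the spread of the summed profile $\tilde v(t)$ — the latter because each split sequence is monotone and supported on a contiguous block, so its own $L^1$ deviation from its mean is at most (block length) $\times$ (its own spread), and summing telescopes. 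Second, I would prove Lemma \ref{lem: monotonicity Q(t)}: that $\tilde Q(t)$ is pathwise non-increasing with $\tilde Q(1) = n$ and $\tilde Q(t) \ge 0$, and that $\bE[\tilde Q(t-1) - \tilde Q(t) \mid \tilde v(t-1)] \ge (\tilde v_1(t-1) - \tilde v_n(t-1))/(2n)$. For an interior averaging edge $(i,i+1)$, $Q$ drops by exactly $(v_i - v_{i+1})/2 \ge 0$; for the wrap-around edge one checks by direct computation (using that the split pieces are translates that recombine) that $Q$ still drops by a comparable amount, giving the conditional-expectation bound after averaging over the $n$ equally likely edges. Third, with both lemmas in hand, I would run the standard potential-drop argument verbatim as in the excerpt: set $E(t) = \{\tilde v_1(t) - \tilde v_n(t) \ge \epsilon/(2n)\}$, use nestedness $E(t)\subseteq E(t-1)$, sum the conditional drop inequality from $t=1$ to $t = 16n^3/\epsilon^2$ to get $n \ge (16n^3/\epsilon^2)\cdot(\epsilon/(4n^2))\,\bP[E(16n^3/\epsilon^2)]$, hence $\bP[E(16n^3/\epsilon^2)] \le \epsilon/4$. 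Combining with $\tilde T_v(t)\le n(\tilde v_1(t)-\tilde v_n(t))$ and a Markov-type split (as in the proof of Proposition \ref{prop: flow}), on the complement of $E(t)$ we have $\tilde T_v(t) < \epsilon/2$, while $E(t)$ contributes at most $2\bP[E(t)] \le \epsilon/2$; so $\bE[T_v(t)] \le \bE[\tilde T_v(t)] \le \epsilon$ at $t = \Theta_\epsilon(n^3)$, giving the upper bound.

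The main obstacle I anticipate is the wrap-around/splitting step in Lemma \ref{lem: monotonicity Q(t)}: verifying that $\tilde Q$ is genuinely monotone and that its one-step conditional expected decrease is bounded below by $(\tilde v_1 - \tilde v_n)/(2n)$ even when a split occurs. One must track carefully how the weight-$(n+1-i)$ coefficients act after the sequence is cut into two contiguous monotone pieces (one of which has its leading entries flattened to $(v_1+v_n)/2$ and the other shifted down by $(v_1+v_n)/2$ and zero-padded), and confirm the two contributions to $Q$ sum to at most the pre-split value minus the claimed gain. A secondary subtlety is making the "block-length times spread" estimate in Lemma \ref{lem:comparison cycle} precise when $N_t$ is large and the blocks have varying lengths — but since the total mass is conserved and equals $1$, and the spreads of disjoint blocks are dominated by the spread of $\tilde v(t)$, this should go through by a clean summation argument. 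Everything else (the lower bound, the potential-drop bookkeeping, the final Markov split) is routine.
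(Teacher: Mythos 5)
Your proposal is correct and follows essentially the same route as the paper: the lower bound from the spectral bound $\gamma(C_n)=\Theta(n^3)$ via Theorem \ref{thm: t_1}, and the upper bound via the splitting process $PC_2$, the comparison inequality $\bE[T_v(t)]\le\bE[\tilde T_v(t)]$ together with $\tilde T_v(t)\le n(\tilde v_1(t)-\tilde v_n(t))$, and the potential-drop argument for $\tilde Q$ with exactly the constants the paper uses. The two subtleties you flag (monotonicity of $\tilde Q$ across a split, and the summation of per-block spreads) are precisely the points the paper verifies by direct computation in Section \ref{para:cycle}, so nothing is missing.
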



\subsection{{$L^{2}\rightarrow L^{1}$} convergence}\label{subsec:L2->1}

In this section we study $L^1$ convergence with respect to  the initialization  $\lVert v(0)\rVert_2 = 1$. Since 
the $L^{2}$ unit sphere, where the initial vector ranges,
contains the $L^{1}$ unit sphere, but we measure the convergence in the $L^{1}$ distance, 
we must have slower (or at most equal) convergence than in the $L^{1}\rightarrow L^{1}$ case:

\begin{prop}\label{prop: comparison t_1 and t_12}
	Let $G$ be any connected graph with $n$ nodes, we have $
	t_{\epsilon,1}\leq t_{\epsilon,2\rightarrow 1}$
\end{prop}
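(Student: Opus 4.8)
The plan is to make precise the intuition already stated in the section opening: the $L^2$ unit sphere contains the $L^1$ unit sphere, so the supremum in Definition~\ref{def:tpq} that defines $t_{\epsilon,2\rightarrow 1}$ is taken over a strictly larger set of initializations than the one defining $t_{\epsilon,1}$. First I would unwind the two definitions. By Definition~\ref{def:tpq}, $t_{\epsilon,2\rightarrow 1}(G)$ is the least $t$ such that $\bE[\lVert v(t)-\bar v\rVert_1]\le \epsilon$ for \emph{every} $v(0)$ with $\lVert v(0)\rVert_2 = 1$, while $t_{\epsilon,1}(G)$ is the least $t$ such that the same bound holds for every $v(0)$ with $\lVert v(0)\rVert_1 = 1$.

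The key step is the containment of feasible initializations. For any $w\in\RR^n$ one has $\lVert w\rVert_2 \le \lVert w\rVert_1$, so if $\lVert w\rVert_1 = 1$ then $\lVert w\rVert_2 \le 1$. This shows the $L^1$ unit sphere is contained in the $L^2$ \emph{ball}. To match the $\lVert\cdot\rVert_2 = 1$ normalization exactly, I would instead invoke the scaling behaviour of the process: the map $v(0)\mapsto v(t)$ is linear (each averaging step is linear), and $\bar v$ scales linearly with $v(0)$, so $\lVert v(t)-\bar v\rVert_1$ is homogeneous of degree one in $v(0)$. Hence for $w$ with $\lVert w\rVert_1 = 1$, writing $c = \lVert w\rVert_2 \le 1$ and $v(0) = w/c$ (which has $\lVert v(0)\rVert_2 = 1$), we get $\bE[\lVert v(t)-\bar v\rVert_1]$ for the $w$-initialized process equals $c$ times the corresponding quantity for the $v(0)$-initialized process, which is $\le 1\cdot$ that quantity. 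Therefore at any time $t$ at which the $L^2$-normalized worst case is $\le\epsilon$, the $L^1$-normalized worst case is also $\le\epsilon$; consequently $t_{\epsilon,1}(G)\le t_{\epsilon,2\rightarrow1}(G)$.

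There is essentially no main obstacle here — the only point requiring a little care is handling the normalization $\lVert v(0)\rVert_2 = 1$ versus $\le 1$, which the homogeneity argument above resolves cleanly (the degenerate case $w = 0$ is vacuous since $\lVert w\rVert_1 = 1$). An alternative, even shorter route is to note that the degree-one homogeneity lets us rewrite both mixing times with the normalization replaced by ``$\le 1$'': $t_{\epsilon,p}$ is the least $t$ with $\sup_{\lVert v(0)\rVert_p\le 1}\bE[\lVert v(t)-\bar v\rVert_1]\le\epsilon$. Then $\{v:\lVert v\rVert_1\le 1\}\subseteq\{v:\lVert v\rVert_2\le 1\}$ gives that the $p=2$ supremum dominates the $p=1$ supremum pointwise in $t$, so the threshold for the former is at least that for the latter, i.e. $t_{\epsilon,1}\le t_{\epsilon,2\rightarrow1}$.
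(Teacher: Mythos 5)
Your argument is correct and is essentially the paper's own proof: both rest on the observation that $\lVert v\rVert_1=1$ implies $\lVert v\rVert_2\le 1$, combined with the degree-one homogeneity of the process (Proposition~\ref{prop:translation and scaling}) to rescale such a $v$ onto the $L^2$ unit sphere without increasing the expected $L^1$ distance at any time $t$. No further comment is needed.
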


Theorem \ref{Thm:L2_convg} implies 
the proposition below (the
simple details of the implication are in \ref{subsubsec: proof of L21 speed}):
\begin{prop}\label{prop: L21 convergence speed}
	Let $v(0)$ be an arbitrary vector on $\bR^n$ and $\bar v$ as before.
	Then the $L^1$ distance between $v(t)$ and $\bar v$ satisfies:
	
	\begin{equation}\label{eqn: L1 convergence speed, upper}
		\bE[\lVert v(t) - \bar v\rVert_1]\leq \bigg(1- {1 \over 2  \gamma(G)}\bigg)^{t/2}  \sqrt{n}\; \lVert v(0) - \bar v\rVert_2 
	\end{equation}
	Furthermore, the slowest convergence speed is at least $\left(1-\frac{1}{2\gamma(G)}\right)^{t}$:
	\[
	\sup_{\left\Vert v(0)\right\Vert _{2}=1}\mathbb{E}[\left\Vert v(t)-\bar{v}\right\Vert _{1}]\geq\left(1-\frac{1}{2\gamma(G)}\right)^{t}.
	\]
\end{prop}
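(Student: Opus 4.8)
The plan is to derive both halves directly from Theorem~\ref{Thm:L2_convg} together with the two elementary norm inequalities $\lVert w\rVert_2 \le \lVert w\rVert_1 \le \sqrt{n}\,\lVert w\rVert_2$, valid for every $w\in\bR^n$ (the right one is Cauchy--Schwarz applied to $\lVert w\rVert_1=\langle \sgn(w),w\rangle$, the left one is $\lVert w\rVert_2^2=\sum w_i^2\le(\sum|w_i|)^2$).

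For the upper bound \eqref{eqn: L1 convergence speed, upper}, I would first apply $\lVert v(t)-\bar v\rVert_1 \le \sqrt{n}\,\lVert v(t)-\bar v\rVert_2$ on each branch of the process, so that $\bE[\lVert v(t)-\bar v\rVert_1] \le \sqrt{n}\,\bE[\lVert v(t)-\bar v\rVert_2]$. Then Jensen's inequality (concavity of $\sqrt{\cdot}$, equivalently Cauchy--Schwarz with the constant random variable $1$) gives $\bE[\lVert v(t)-\bar v\rVert_2] \le \sqrt{\bE[\lVert v(t)-\bar v\rVert_2^2]}$. Plugging in the $L^2$ bound \eqref{eq:thmL2_Upp} of Theorem~\ref{Thm:L2_convg} yields exactly $\bE[\lVert v(t)-\bar v\rVert_1] \le \bigl(1-\tfrac{1}{2\gamma(G)}\bigr)^{t/2}\sqrt{n}\,\lVert v(0)-\bar v\rVert_2$.

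For the lower bound I would exhibit a concrete slow initialization: let $\phi$ be a unit Fiedler vector, i.e.\ $L\phi = \lambda_2(G)\,\phi$ with $\lVert\phi\rVert_2 = 1$; since $\phi\perp\mathbf{1}$, the choice $v(0)=\phi$ has $\bar v = 0$. By convexity of the $L^1$ norm (Jensen's inequality), $\bE[\lVert v(t)\rVert_1] \ge \lVert\bE[v(t)]\rVert_1$. Iterating Lemma~\ref{lem:singlestep} via the tower property and linearity gives $\bE[v(t)] = \bigl(I_n - \tfrac{L}{2|E|}\bigr)^t\phi = \bigl(1-\tfrac{\lambda_2(G)}{2|E|}\bigr)^t\phi$, because $\phi$ is an eigenvector of $L$. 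Hence, using $\lVert\phi\rVert_1 \ge \lVert\phi\rVert_2 = 1$ and $\gamma(G) = |E|/\lambda_2(G)$ (Definition~\ref{def:gamma}), we obtain $\sup_{\lVert v(0)\rVert_2=1}\bE[\lVert v(t)-\bar v\rVert_1] \ge \bigl(1-\tfrac{1}{2\gamma(G)}\bigr)^t\lVert\phi\rVert_1 \ge \bigl(1-\tfrac{1}{2\gamma(G)}\bigr)^t$.

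There is no serious obstacle here; the only points needing care are (i) keeping the direction of the two norm inequalities straight, namely $\sqrt{n}\,\lVert\cdot\rVert_2$ dominates $\lVert\cdot\rVert_1$, which in turn dominates $\lVert\cdot\rVert_2$, and (ii) observing that the Fiedler vector has zero coordinate-sum, so $\bar v$ vanishes and the dynamics act on it with conditional expectation governed by the linear operator $I_n - L/(2|E|)$. All the substantive content is already supplied by Theorem~\ref{Thm:L2_convg} and Lemma~\ref{lem:singlestep}.
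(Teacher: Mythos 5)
Your proposal is correct and follows essentially the same route as the paper: Cauchy--Schwarz plus Jensen reduce the upper bound to the $L^2$ estimate of Theorem~\ref{Thm:L2_convg}, and the lower bound takes $v(0)$ to be a unit Fiedler vector, applies Jensen coordinatewise to get $\bE[\lVert v(t)\rVert_1]\ge\lVert\bE[v(t)]\rVert_1$, and finishes with $\lVert\cdot\rVert_1\ge\lVert\cdot\rVert_2$. No gaps.
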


Proposition \ref{prop: L21 convergence speed} gives the following general upper and lower bounds for $t_{\epsilon,2\rightarrow 1}$
that are very similar to formulas
in Theorem \ref{thm: t_1}
(proof details are 
in \ref{subsubsec: proof of t_12}):

\begin{cor}[General bounds for $t_{\epsilon,2\rightarrow 1}$]\label{cor: t_12}
	\begin{align} \label{eqn: t_12, first formula}
		(2\gamma(G) - 1) \log  (\epsilon^{-1} )
		\leq   t_{\epsilon,2\rightarrow 1}  \leq 4\gamma(G)\log (\sqrt n \epsilon^{-1})
	\end{align}
	In particular, for every undirected connected graph  with $n\ge 3$ nodes, we have
	\begin{align} \label{eqn: t_12, second formula}
		\gamma(G) \log  (\epsilon^{-1})
		\leq   t_{\epsilon,2\rightarrow 1}  \leq 4\gamma(G)\log (\sqrt n \epsilon^{-1}) 
	\end{align}
\end{cor}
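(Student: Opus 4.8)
The plan is to deduce Corollary~\ref{cor: t_12} directly from Proposition~\ref{prop: L21 convergence speed}, which already supplies matching (up to the $\log\sqrt n$ gap) upper and lower bounds on the expected $L^1$ distance under the $\|v(0)\|_2=1$ initialization. The work is purely to translate these distance bounds into mixing-time bounds by setting them equal to $\epsilon$ and solving for $t$; this mirrors verbatim the passage from Theorem~\ref{Thm:L2_convg} to Corollary~\ref{Cor:L2mixing} and from there to Theorem~\ref{thm: t_1}.

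First I would prove the upper bound in \eqref{eqn: t_12, first formula}. Starting from \eqref{eqn: L1 convergence speed, upper}, note that on the $L^2$ unit sphere $\|v(0)-\bar v\|_2 \le \|v(0)\|_2 = 1$ (since $\bar v$ is the orthogonal projection of $v(0)$ onto $\mathbf 1$, subtracting it only decreases the $L^2$ norm). Hence $\bE[\|v(t)-\bar v\|_1] \le (1-\tfrac{1}{2\gamma(G)})^{t/2}\sqrt n$. This is at most $\epsilon$ once $(1-\tfrac{1}{2\gamma(G)})^{t/2} \le \epsilon/\sqrt n$, i.e. $\tfrac{t}{2}\log\!\big((1-\tfrac{1}{2\gamma(G)})^{-1}\big) \ge \log(\sqrt n\,\epsilon^{-1})$. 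Using the elementary inequality $\log\!\big((1-x)^{-1}\big)\ge x$ for $x\in[0,1)$ with $x=\tfrac{1}{2\gamma(G)}$, it suffices that $t \ge 4\gamma(G)\log(\sqrt n\,\epsilon^{-1})$, which gives the claimed upper bound. For the lower bound in \eqref{eqn: t_12, first formula}, take the supremum statement of Proposition~\ref{prop: L21 convergence speed}: there is an initialization with $\bE[\|v(t)-\bar v\|_1]\ge (1-\tfrac{1}{2\gamma(G)})^{t}$. If $t < (2\gamma(G)-1)\log(\epsilon^{-1})$ then, using $(1-\tfrac{1}{2\gamma(G)})^{2\gamma(G)-1}\ge e^{-1}$ (a standard inequality valid for $\gamma(G)\ge 1$, which holds since $\gamma(G)=|E|/\lambda_2 \ge 1$ for any connected graph on $n\ge 2$ nodes), one checks $(1-\tfrac{1}{2\gamma(G)})^t > \epsilon$, so the mixing time has not yet been reached; hence $t_{\epsilon,2\rightarrow 1}\ge (2\gamma(G)-1)\log(\epsilon^{-1})$. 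Alternatively, and more cleanly, this lower bound already follows from Proposition~\ref{prop: comparison t_1 and t_12} together with the lower bound in Theorem~\ref{thm: t_1}, since $t_{\epsilon,2\to1}\ge t_{\epsilon,1}\ge (2\gamma(G)-1)\log\epsilon^{-1}$.

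For the sharpened form \eqref{eqn: t_12, second formula}, the only change is replacing the constant $2\gamma(G)-1$ by $\gamma(G)$ in the lower bound, which is legitimate precisely when $\gamma(G)\ge 1$, i.e. $2\gamma(G)-1\ge\gamma(G)$. So I would record the bound $\gamma(G)=|E|/\lambda_2\ge 1$ for connected graphs with $n\ge 3$ (indeed $\lambda_2\le n$ while $|E|\ge n-1\ge \lambda_2$ is not immediate, so instead use $\lambda_2(G)\le \tfrac{n}{n-1}\,d_{\min}\le \tfrac{n}{n-1}\,\bar d = \tfrac{n}{n-1}\cdot\tfrac{2|E|}{n} = \tfrac{2|E|}{n-1}$, giving $\gamma(G)\ge (n-1)/2\ge 1$ for $n\ge 3$), and then \eqref{eqn: t_12, second formula} is immediate from \eqref{eqn: t_12, first formula}.

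I do not anticipate a genuine obstacle here: the corollary is a routine repackaging of Proposition~\ref{prop: L21 convergence speed}, and the only points requiring any care are (i) the elementary logarithmic inequalities $\log\frac{1}{1-x}\ge x$ and $(1-\tfrac1m)^{m}\ge$ (constant) used to convert geometric decay rates into explicit time thresholds, and (ii) verifying the bound $\gamma(G)\ge 1$ needed to pass from the $2\gamma(G)-1$ coefficient to the $\gamma(G)$ coefficient in \eqref{eqn: t_12, second formula}. Everything else is bookkeeping identical to the $L^2$ case already handled in Corollary~\ref{Cor:L2mixing}.
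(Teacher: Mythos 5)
Your proposal is correct and follows essentially the same route as the paper: both deduce the upper bound from Proposition~\ref{prop: L21 convergence speed} together with $\lVert v(0)-\bar v\rVert_2\le 1$ and the inequality $\log\frac{1}{1-x}\ge x$, and both obtain the lower bound from the supremum statement of that proposition via $(1-\tfrac{1}{2\gamma})^{t}\ge \exp(-\tfrac{t}{2\gamma}/(1-\tfrac{1}{2\gamma}))$. Your explicit verification that $\gamma(G)\ge 1$ for $n\ge 3$ (needed to pass from $2\gamma(G)-1$ to $\gamma(G)$ in \eqref{eqn: t_12, second formula}) is a detail the paper leaves implicit, and it is a welcome addition.
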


The $\log n$ gap between the lower and upper bounds in Corollary \ref{cor: t_12} essentially comes from the Cauchy-Schwarz inequality used to prove Proposition \ref{prop: L21 convergence speed}. The gap however can be removed when $L(G)$
has a delocalized Fiedler vector.

\begin{dfn}\label{def:delocalization}
	A vector $v\in \bR^n$ is called $\delta$-delocalized if 
	\begin{align}\label{eqn: delocalization}
		\lVert v \rVert_1 \geq \delta \sqrt {n}\:\lVert v \rVert_2 \;.
	\end{align}
\end{dfn}

Heuristically, a vector is called 
delocalized if the top constant fraction of its coordinates are of roughly
the same magnitude (there are other ways to quantify delocalization, but they are of similar spirit). Delocalization of eigenvectors of graph 
Laplacians is an active area 
in random matrix theory and 
combinatorics. We refer the readers to \cite{brooks2013non} and \cite{rudelson2015delocalization} for recent progress. We show, that $\Theta(\log (n)  \gamma(G))$ is the correct magnitude for $t_{\epsilon,2\rightarrow 1}$ given the second eigenvector of the graph Laplacian is $\Omega(1)-$delocalized. Proof details can be found in Section \ref{subsubsec:t12,delocalize}.
\begin{thm}[Improved bounds for $t_{\epsilon,2\rightarrow1}$ with delocalized eigenvector]\label{thm: t_12,delocalize}
	Let $G$ be an undirected connected graph with $n$ nodes. Let $u_2$ be a unit eigenvector of  $L(G)$ with respect to the second smallest eigenvalue. If $u_2$ is $\delta$-delocalized, we have:
	\begin{align}\label{eqn: t_12, delocalize}
		\left (2\gamma(G) - 1\right)  \log  (\delta\sqrt n \epsilon^{-1})
		\leq   t_{\epsilon,2\rightarrow 1}  \leq  4\gamma(G) \log (\sqrt n \epsilon^{-1}) 
	\end{align}
	In particular, for every undirected connected graph  with $n\ge 3$ nodes, we have 
	\begin{align} \label{eqn: t_12, delocalize, second formula}
		\gamma(G) \log  (\delta\sqrt n \epsilon^{-1}) 
		\leq   t_{\epsilon,2\rightarrow 1}  \leq 4  \gamma(G)\log (\sqrt n \epsilon^{-1}) 
	\end{align}
\end{thm}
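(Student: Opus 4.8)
The plan is to prove the two halves of \eqref{eqn: t_12, delocalize} separately. The upper bound $t_{\epsilon,2\rightarrow1}(G)\le 4\gamma(G)\log(\sqrt n\,\epsilon^{-1})$ already follows verbatim from Proposition~\ref{prop: L21 convergence speed} (equivalently Corollary~\ref{cor: t_12}), so all the work is in the lower bound $t_{\epsilon,2\rightarrow1}(G)\ge(2\gamma(G)-1)\log(\delta\sqrt n\,\epsilon^{-1})$. To get it I would test the process on the single initialization $v(0)=u_2$, the unit Fiedler eigenvector of $L=L(G)$. This is admissible since $\lVert u_2\rVert_2=1$, and because $u_2\perp\mathbf 1$ we have $\bar v=0$; hence it suffices to keep $\bE[\lVert v(t)\rVert_1]$ above $\epsilon$ for as long as possible. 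This is exactly the lower-bound argument behind Proposition~\ref{prop: L21 convergence speed}, sharpened by the delocalization hypothesis.

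First I would pass to the expected trajectory. Convexity of $\lVert\cdot\rVert_1$ and Jensen give $\bE[\lVert v(t)\rVert_1]\ge\lVert\bE[v(t)]\rVert_1$. Iterating Lemma~\ref{lem:singlestep} and using linearity of expectation, $\bE[v(t)]=\bigl(I_n-\tfrac{L}{2|E|}\bigr)^t v(0)$, and since $v(0)=u_2$ is an eigenvector of $L$ with eigenvalue $\lambda_2(G)$ this collapses to $\bE[v(t)]=\bigl(1-\tfrac{\lambda_2(G)}{2|E|}\bigr)^t u_2=\bigl(1-\tfrac{1}{2\gamma(G)}\bigr)^t u_2$. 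Hence $\bE[\lVert v(t)\rVert_1]\ge\bigl(1-\tfrac{1}{2\gamma(G)}\bigr)^t\lVert u_2\rVert_1$. This is precisely where delocalization is used: by Definition~\ref{def:delocalization}, a $\delta$-delocalized unit vector has $\lVert u_2\rVert_1\ge\delta\sqrt n\,\lVert u_2\rVert_2=\delta\sqrt n$, so $\bE[\lVert v(t)\rVert_1]\ge\bigl(1-\tfrac{1}{2\gamma(G)}\bigr)^t\delta\sqrt n$; without the hypothesis one could only invoke the trivial $\lVert u_2\rVert_1\ge\lVert u_2\rVert_2=1$, which just reproduces Corollary~\ref{cor: t_12}.

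It remains to convert this geometric lower bound into a mixing-time bound in the standard way. For $\delta\sqrt n>\epsilon$ (otherwise the claim is vacuous), $\bigl(1-\tfrac{1}{2\gamma(G)}\bigr)^t\delta\sqrt n>\epsilon$ holds precisely when $t<\log(\delta\sqrt n\,\epsilon^{-1})\big/\bigl(-\log(1-\tfrac{1}{2\gamma(G)})\bigr)$, and the elementary inequality $-\log(1-x)\le\tfrac{x}{1-x}$ with $x=\tfrac{1}{2\gamma(G)}$ bounds that denominator by $\tfrac{1}{2\gamma(G)-1}$, so the threshold is at least $(2\gamma(G)-1)\log(\delta\sqrt n\,\epsilon^{-1})$. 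Thus $\bE[\lVert v(t)-\bar v\rVert_1]>\epsilon$ for every $t\le(2\gamma(G)-1)\log(\delta\sqrt n\,\epsilon^{-1})$, which by Definition~\ref{def:tpq} is exactly the asserted lower bound in \eqref{eqn: t_12, delocalize}. The ``in particular'' formula \eqref{eqn: t_12, delocalize, second formula} then follows from $2\gamma(G)-1\ge\gamma(G)$, valid because $\gamma(G)\ge1$ for every connected graph on $n\ge3$ vertices — the same reduction used for Corollary~\ref{cor: t_12}.

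I do not expect a genuine obstacle: the argument is a one-line Jensen step, a telescoping of Lemma~\ref{lem:singlestep} exploiting that $u_2$ is an eigenvector, and a routine $-\log(1-x)$ estimate. The only two points needing a little care are (i) invoking delocalization against the normalization $\lVert u_2\rVert_2=1$ so that it yields $\lVert u_2\rVert_1\ge\delta\sqrt n$ rather than $\ge\delta$ — this factor of $\sqrt n$ inside the logarithm is the entire improvement over Corollary~\ref{cor: t_12} — and (ii) the trivial verification that $\gamma(G)\ge1$ when $n\ge3$.
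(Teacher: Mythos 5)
Your proposal is correct and follows essentially the same route as the paper's proof: initialize at the unit Fiedler vector $u_2$, pass to $\bE[v(t)]=M^t u_2=\bigl(1-\tfrac{1}{2\gamma(G)}\bigr)^t u_2$ via the coordinatewise bound $\bE[\lVert v(t)\rVert_1]\ge\lVert\bE[v(t)]\rVert_1$, invoke $\delta$-delocalization to get $\lVert u_2\rVert_1\ge\delta\sqrt n$, and finish with the $\log(1-x)\ge -x/(1-x)$ estimate. The upper bound is likewise taken verbatim from Corollary~\ref{cor: t_12}, exactly as in the paper.
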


Theorem \ref{thm: t_12,delocalize} can be used to analyze the $L^{2\rightarrow1}$ mixing time for several natural families of graphs. Our results are summarized in Table \ref{tab:L21} below (see Section \ref{subsubsec: example L21} for proof details).

\begin{table}[htbp]
	\begin{center}
		\begin{tabular}{c|c|c|c|c}
			\toprule
			{\bf Graph} & Complete graph $K_n$ & Binary tree $B_n$ & Star $S_{n-1}$ & Cycle $C_n$ \\[4pt]
			\midrule
			$\boldsymbol{t_{\epsilon, 2\rightarrow 1}}$      &     $\Theta_\epsilon(n\log n)$           &    $\Theta_\epsilon(n^2\log n)$            &      $\Theta_\epsilon(n\log n)$           &         $\Theta_\epsilon(n^3\log n) $ \\[4pt]
			\bottomrule
		\end{tabular}
		\caption{The $L^{ 2\rightarrow 1}$ mixing time for selected graphs}
		\label{tab:L21}
	\end{center}
\end{table}

\section{Numerical Explorations}\label{sec: NumericalExp}

The goal of this section is to numerically corroborate some of the theory and inspire future work. We explore the expected $L^1$ behavior for three distinct scenarios: (1) The complete graph (2) Star graph with the initialization on the center (i.e., high degree) node (3) Star graph with the initialization on a leaf (i.e., high degree) node.

Fig.~\ref{fig:ComStars_std} shows the expected $L^1$ convergence over $20$ runs enveloped by the standard of deviation. The enveloping curves can be seen as horizontal error bars. Fig.~\ref{fig:ComStar_size} illustrates the convergence in each case as a function of the size of the graph.

\begin{figure}[H]
	\begin{centering}
		\includegraphics[scale=0.35]{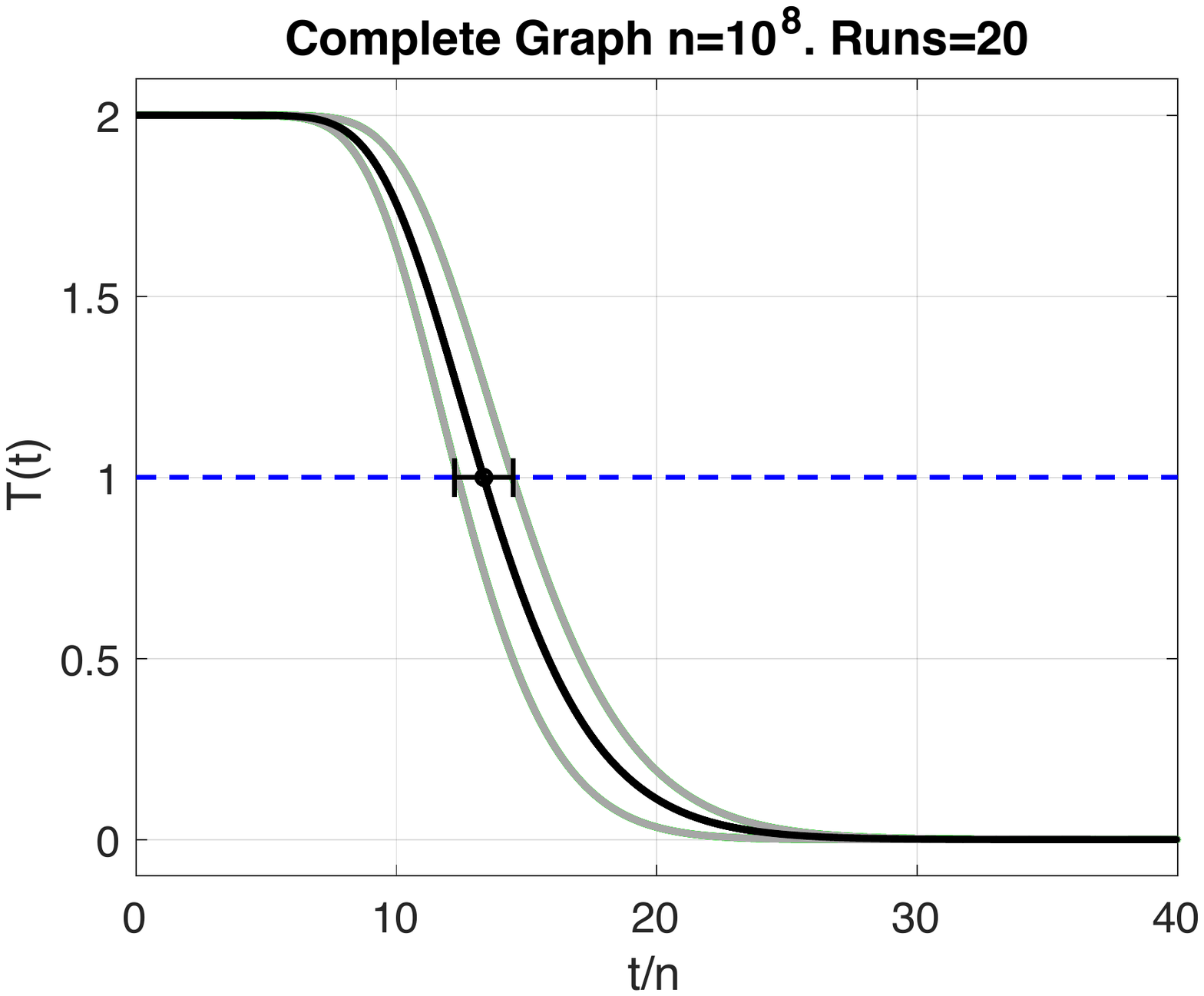}\includegraphics[scale=0.35]{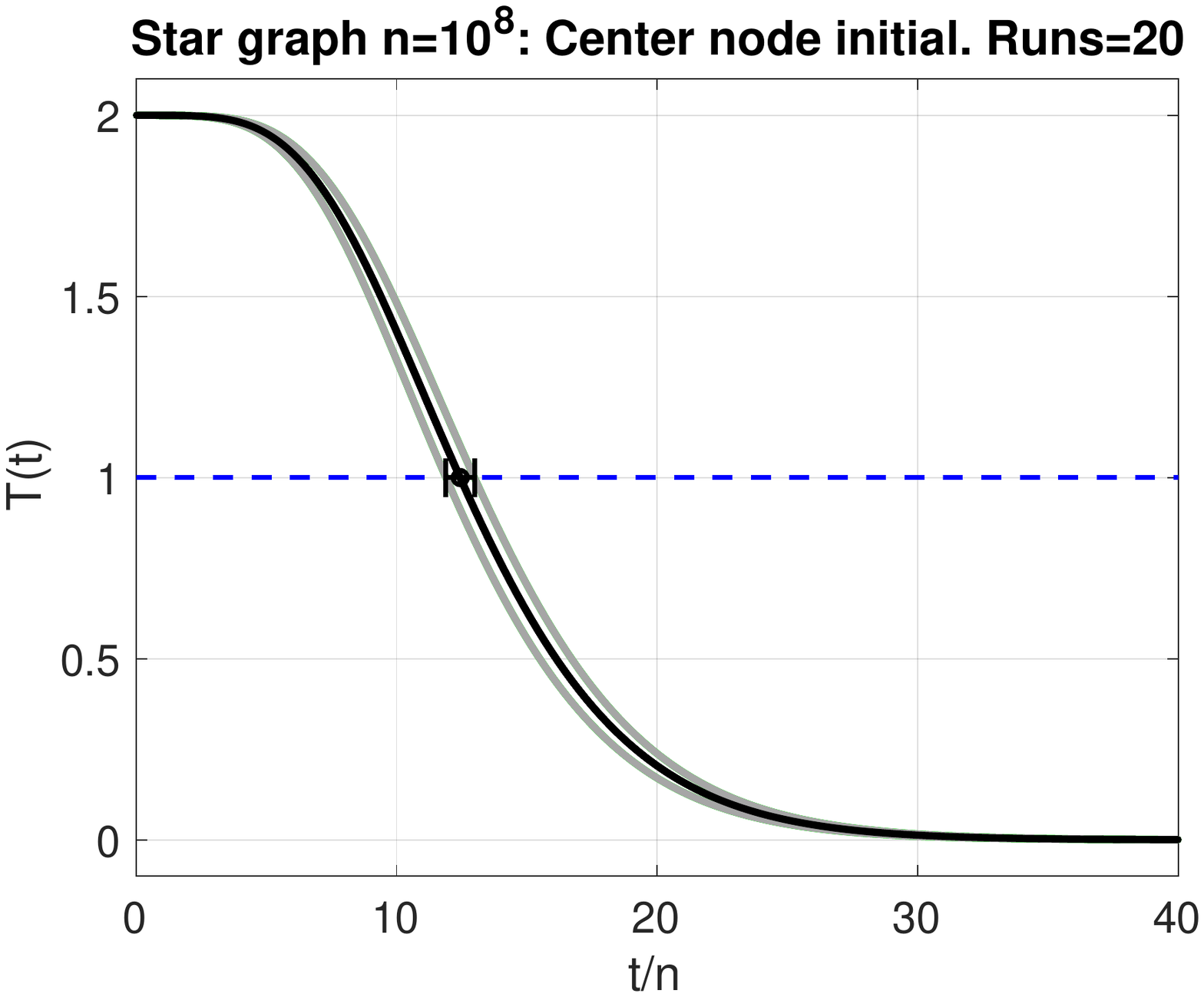}\\ \includegraphics[scale=0.35]{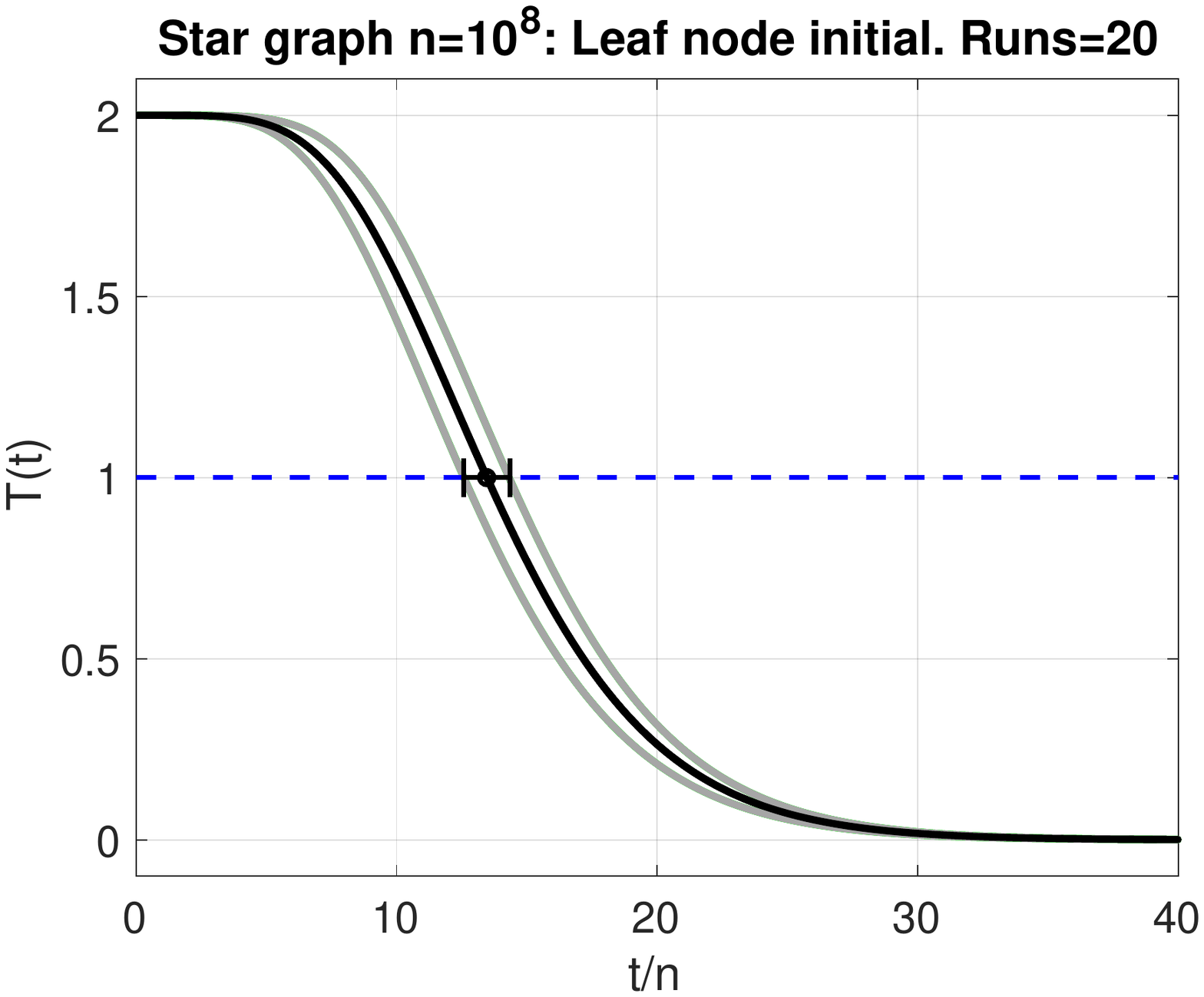}
		\par\end{centering}
	\caption{\label{fig:ComStars_std}Expected $L^1$ enveloped by the standard of deviation over $20$ runs.}
\end{figure}

\begin{figure}[H]
	\begin{centering}
		\includegraphics[scale=0.35]{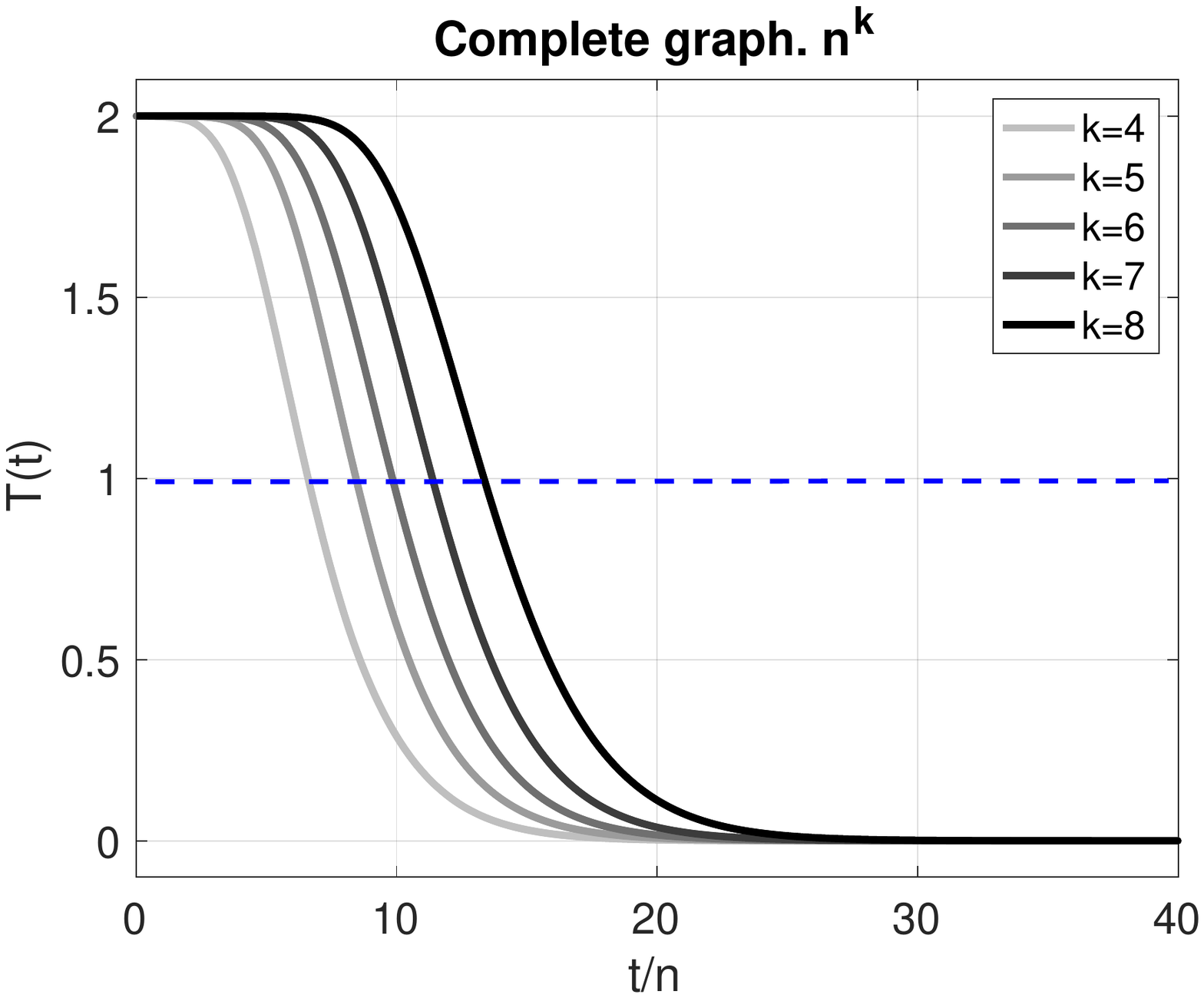}\includegraphics[scale=0.35]{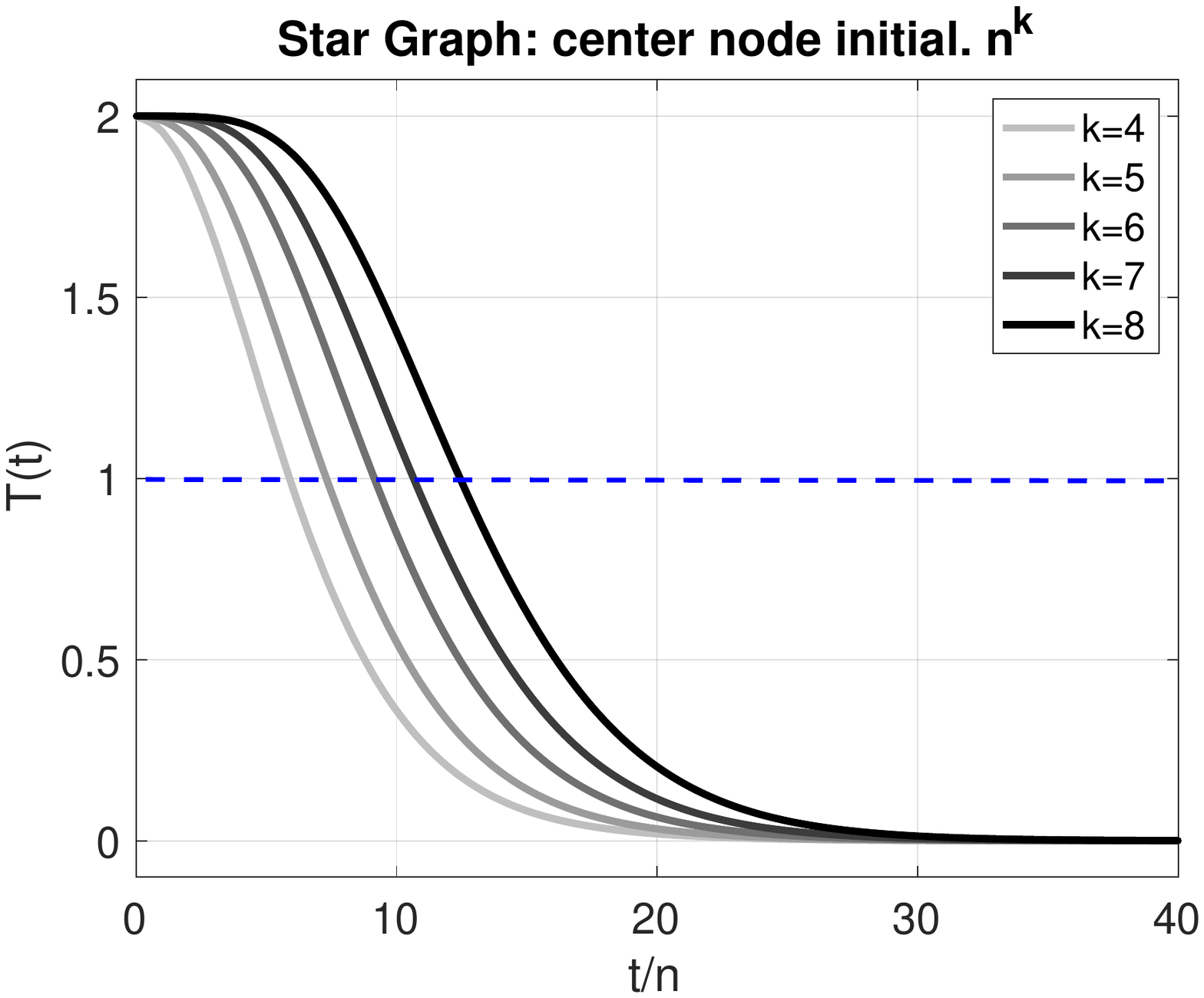}\\ \includegraphics[scale=0.35]{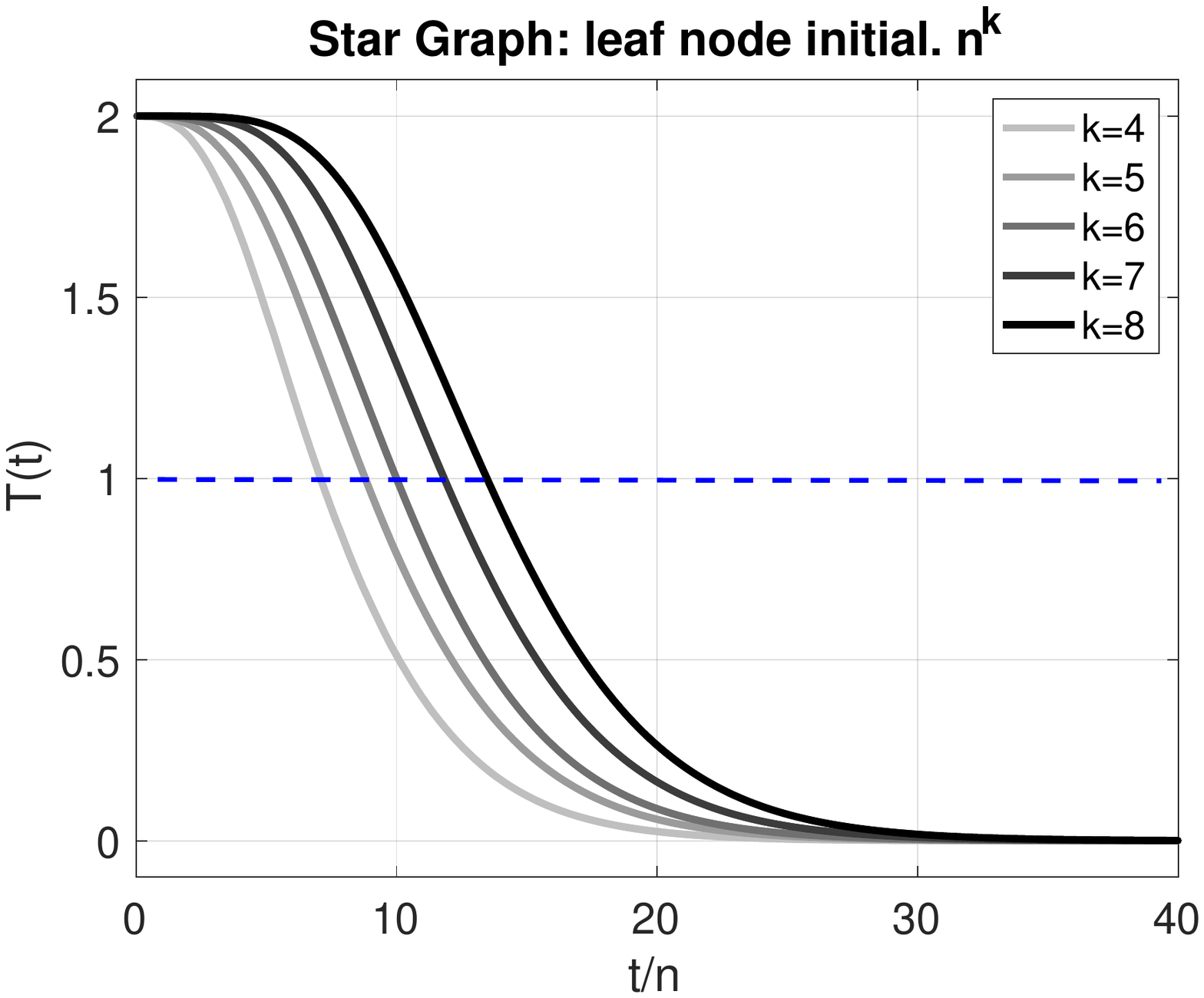}
		\par\end{centering}
	\caption{\label{fig:ComStar_size}Convergence in time for various graph sizes.}
\end{figure}

\begin{figure}[H]
	\begin{centering}
		\includegraphics[scale=0.35]{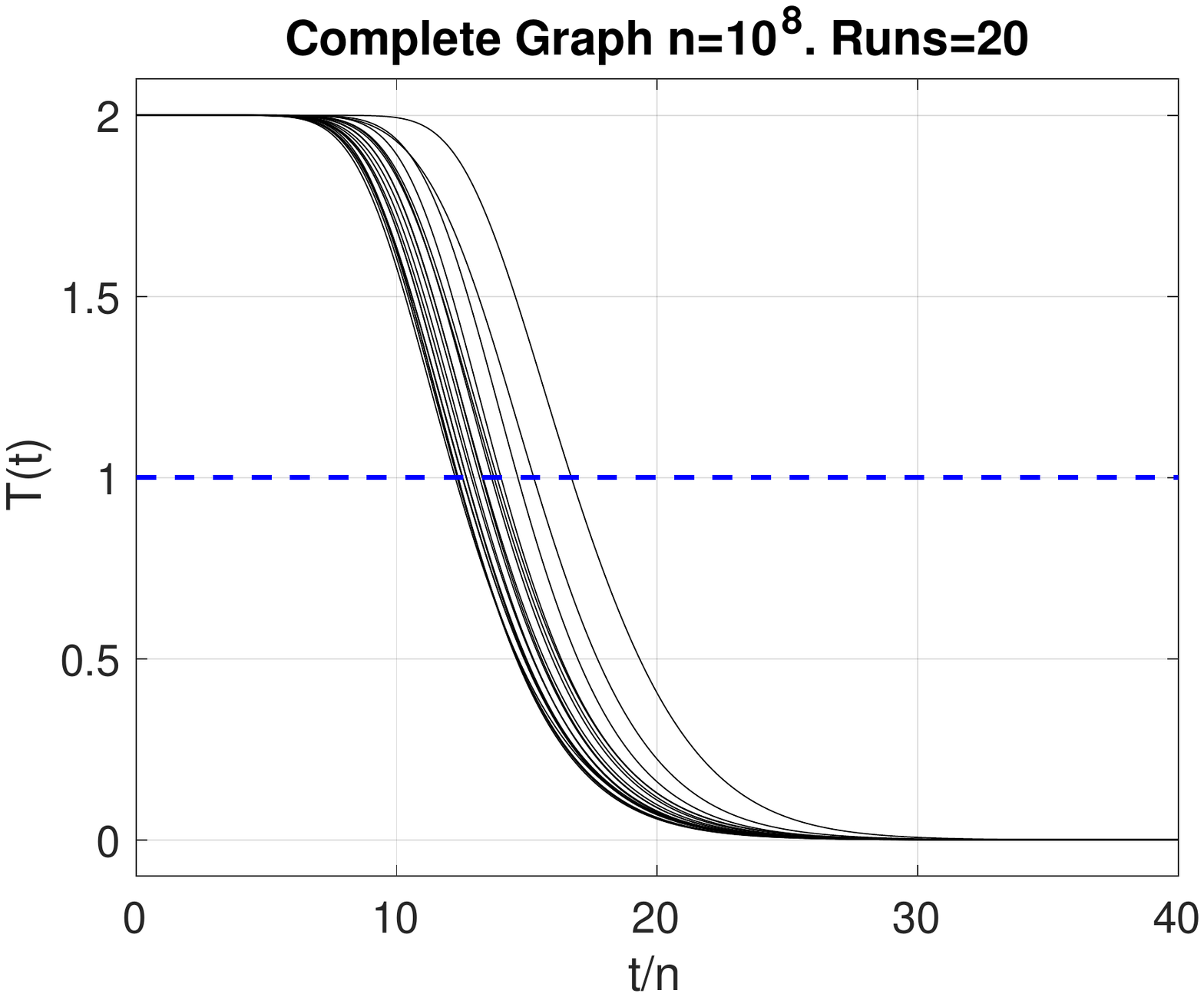}\includegraphics[scale=0.35]{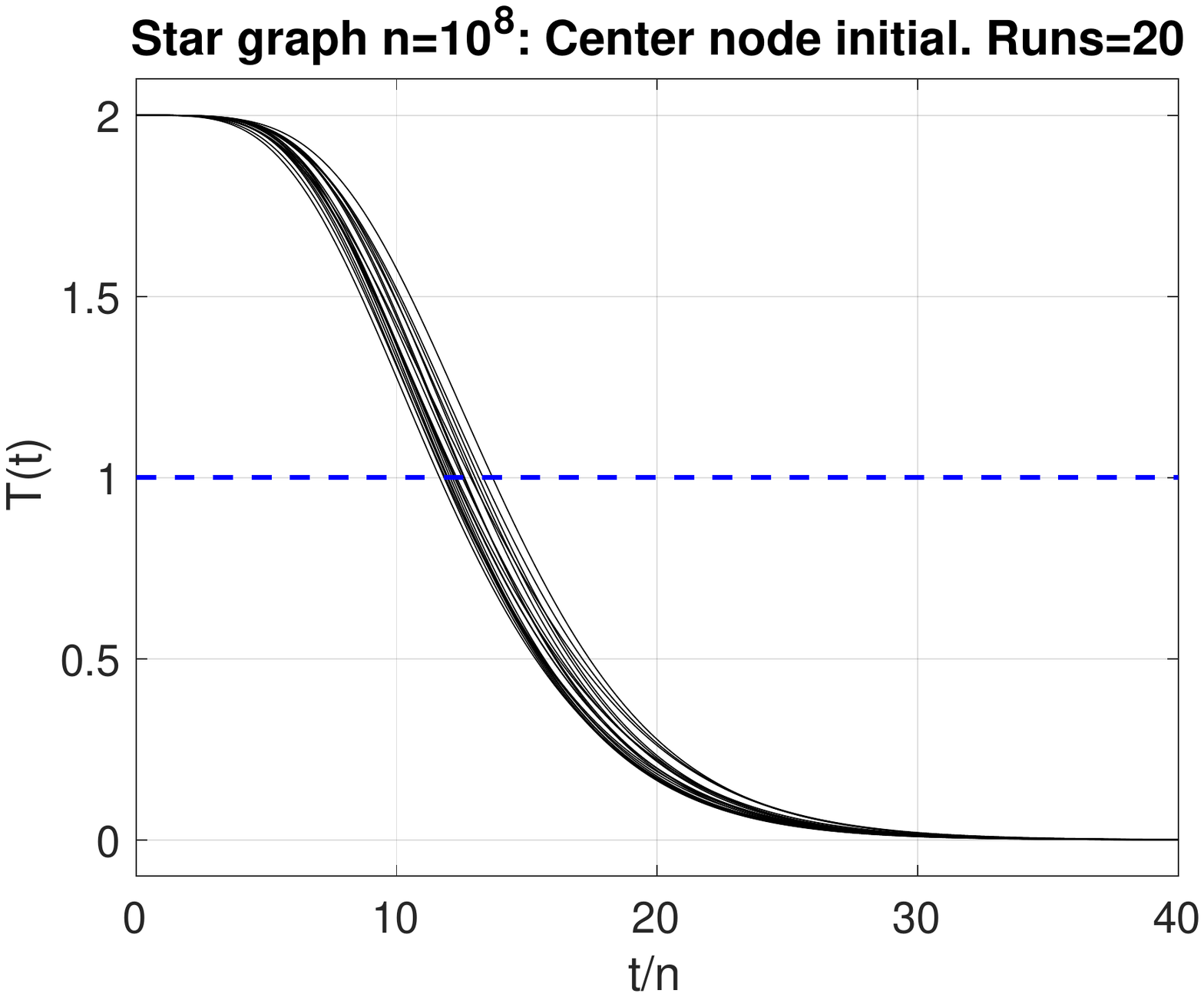}\\ \includegraphics[scale=0.35]{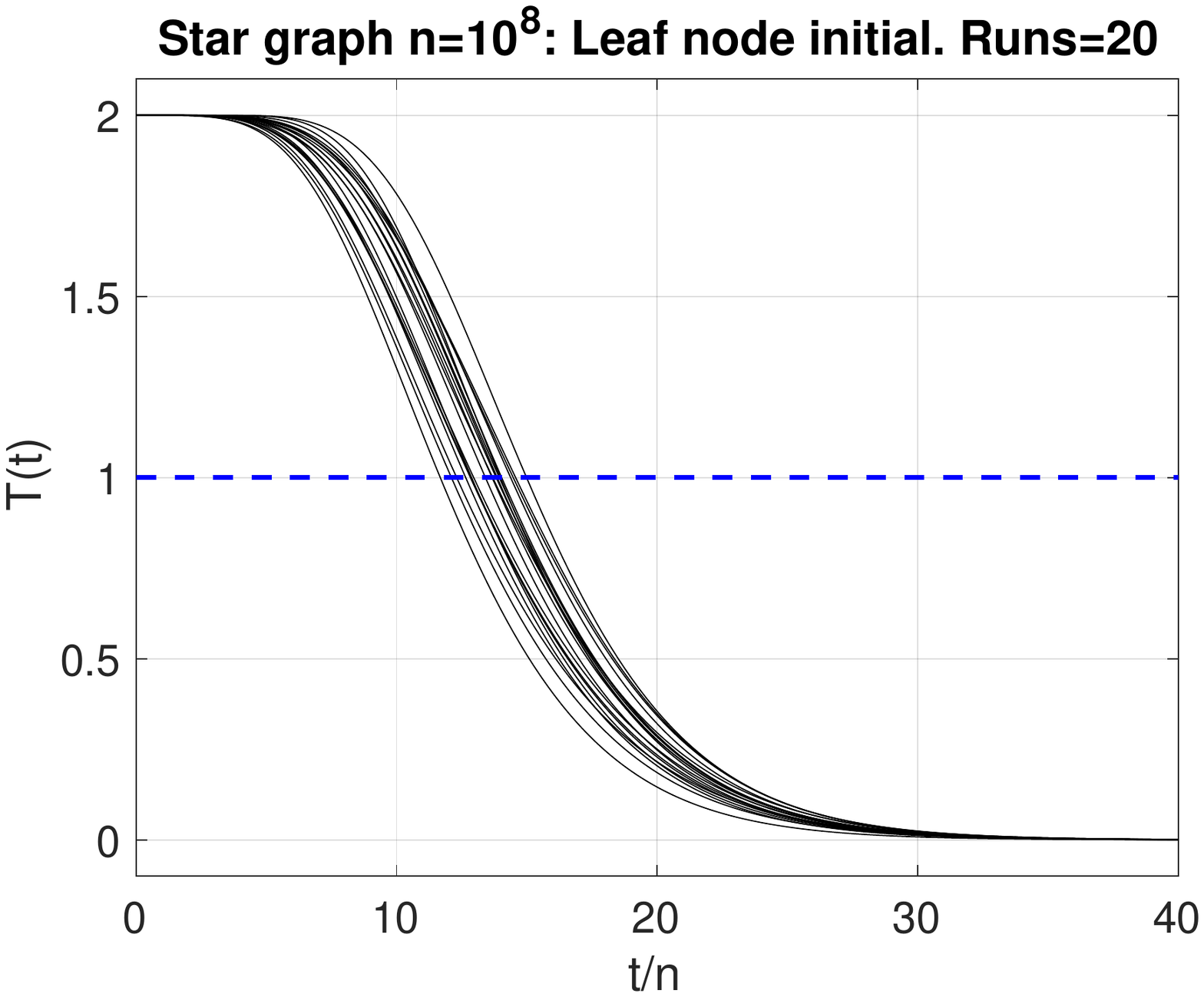}
		\par\end{centering}
	\caption{\label{fig:ComStar_size}Convergence in time for various graphs and $20$ distinct runs per graph.}
\end{figure}

\section{A related random process and the extremal nature of the complete graph
	with respect to it} \label{sec:MC2}
It is shown in Corollary \ref{cor: fastest L1} that the complete graph has asymptotically the smallest $L^1$-mixing time among all undirected, connected graphs when the number of nodes goes to infinity. However, for a fixed number of nodes $n$ and a fixed number of iterations $t$, there exists other graphs that mix faster than the complete graph. For example, Figure \ref{fig:EDelta1} shows the star graph mixes faster than the complete graph when the number of iterations is small. In this section, we introduce a related process for which we will
prove that the complete graph is extremal. The new process will be
identical to the original, slowed down
by a factor of $|E(G)|/{n\choose 2}$.

Given an undirected and connected graph $G$ with $n$ vertices, the new process is defined as follows: The process starts from an initial vector $v_{0}\in \bR^n$. At each discrete time-step the process picks
a pair of nodes $(i,j)$ uniformly at random from all ${n\choose 2}$ pairs 
of nodes
with $i\neq j$, and performs the averaging step if $(i,j)\in E(G)$. If $(i,j)\notin E(G)$, the process simply keeps 
the current $v$ (i.e. it ``does nothing'').  

The new process clearly coincides with the original one 
for the complete
graph, $K_{n}$. It however 
runs slower on 
general graphs $G$, 
due to the steps when it idles
(that constitute
$1 - |E(G)|/{n\choose 2}$ fraction of steps, on expectation).

\subsection{$L^{1}$-norm is monotonically non-increasing}
Recall that if $(i,j)\in E(G)$, 
we update the vector $(v_{1},\dots,v_{i},\dots,v_{j},\dots,v_{n})$
by replacing $v_i$ and $v_j$ by $\frac{v_{i}+v_{j}}{2}$. The only stationary state of the process is the vector $\bar v =(a,a,\dots,a)^T$. We define
\[
T_{G}(t)=\sum_{j=1}^{n}|v_{j}(t)-a|
\]
where we have put the graph $G$ as subscript on purpose.
By applying the averaging process to the $i^{\rm th}$ and $j^{\rm th}$ entry
when $(i,j)\in E(G)$, we obtain
\begin{eqnarray*}
	T_{G}(t+1) & = & T_{G}(t)-|v_{i}(t)-a|-|v_{j}(t)-a|+2\;
	\left|\frac{v_{i}(t)+v_{j}(t)}{2}-a
	\right| \; \le  \; T_{G}(t)
\end{eqnarray*}
(The above argument is same for 
the original process 
and the new one.) 
Further, $T_{G}(t+1)$
is clearly the same as
$T_{G}(t)$, when the $(i,j)$ 
pair picked is not in $E(G)$.
Thus monotonicity holds in the
case of the new process too.

\subsection{The complete graph is extremal for the new process}

We now prove that the complete graph converges to the averaging vector fastest in $L^1$ distance under the new dynamics, i.e. for any connected graph $G$ with $n$ nodes and for $K=K_{n}$
we have $\mathbb{E}_{v}\left[T_{K}(t)\right]\le\mathbb{E}_v\left[T_{G}(t)\right]$ for any initialization $v$. This will in turn imply that for all $t$.
$$\sup_{\lVert v\rVert_1 = 1}\mathbb{E}_{v}\left[T_{K}(t)\right]\le \sup_{\lVert v\rVert_1 = 1}\mathbb{E}_v \left[T_{G}(t)\right]$$

This implies that if we start the new process with the same initial vector, $v$, on $K_{n}$, respectively
on some other graph on $n$ nodes,
then after step $t$ the state vector for the complete
graph will be closer to $\bar{v}$, than the corresponding state vector for the other 
graph (in terms of the expected distance).
That is, the complete graph mixes the fastest in a strong sense.

\begin{thm}\label{thm: complete fastest, new process}
	(The complete graph is extremal)\label{Thm:Expected_1-norm} Let $G_n$ be any connected graph with $n$ nodes and $K=K_n$ be the $n$-node complete graph. Let $v$ be  an arbitrary  initial vector. We have  
	$\mathbb{E}_v [T_{K}(t)]\le\mathbb{E}_v [T_{G}(t)]$
	for all $t\ge0$ and any connected graph $G$, where expectation is
	with respect to the sequence of random choices of pair of nodes $1\le i<j\le n$. 
\end{thm}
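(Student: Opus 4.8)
The plan is to compare the new (slowed‑down) process on an arbitrary connected graph $G$ with the same process on $K_n$ by a coupling that keeps track not of the raw state vectors but of their \emph{sorted deviations} from $\bar v$. Write $a = \frac1n\sum_i v_i$ and set $w_i(t) = v_i(t) - a$, so that $T_G(t) = \sum_i |w_i(t)|$ and $\sum_i w_i(t) = 0$ is preserved. The key structural observation is that a single averaging step on a pair $(i,j)$ replaces $w_i,w_j$ by their common average $\frac{w_i+w_j}{2}$, and this operation is a doubly stochastic (in fact, a ``$T$‑transform'' / Robin Hood) operation on $w$; hence it can only make $w$ \emph{more majorized}, and $T_G$ decreases by exactly $|w_i| + |w_j| - |w_i+w_j|$, which is $2\min(|w_i|,|w_j|)$ when $w_i,w_j$ have opposite signs and $0$ otherwise. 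So on $K_n$ every ordered pair can be chosen and contributes its full ``cancellation'', whereas on $G$ only edge‑pairs contribute and non‑edges idle.

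The main step is to set up the right monotone coupling. I would couple the two processes so that at each time $t$ they pick the same unordered pair $(i,j)$ uniformly from $\binom{n}{2}$; on $K_n$ it is always an averaging step, on $G$ it is an averaging step iff $(i,j)\in E(G)$ and a no‑op otherwise. The claim to push through induction on $t$ is a coordinate‑wise domination of sorted absolute deviations: if $w^K(t)$ and $w^G(t)$ denote the two deviation vectors under this coupling, then for every $t$ the decreasing rearrangement of $|w^K(t)|$ is majorized by (equivalently, entrywise $\le$ in the ``sum of top $k$'' sense, which for equal $\ell_1$ masses is Hardy–Littlewood–Pólya majorization) that of $|w^G(t)|$ — and crucially this should be maintained \emph{pointwise over the common edge sequence}, not just in expectation, so that taking expectations at the end gives $\mathbb E_v[T_K(t)] \le \mathbb E_v[T_G(t)]$. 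The inductive step amounts to: majorization is preserved under applying the same averaging operation to both vectors (standard, since averaging is a linear doubly‑stochastic Robin‑Hood move and these are known to preserve majorization order), and applying the averaging move to $w^K$ while leaving $w^G$ unchanged can only help, since an averaging move never increases any symmetric convex functional and in particular never increases the majorization level.

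The hard part — and where the argument needs real care — is that majorization is \emph{not} in general preserved when you apply a convexity‑reducing operation to only one of the two vectors: if $x \prec y$ it is true that averaging a pair in $x$ gives $x' \prec x \prec y$, so $x' \prec y$; but if the coupled step is an \emph{edge} of $G$ we average the \emph{same} pair of coordinates in both $x=w^K(t)$ and $y=w^G(t)$, and one must check that $T$-transforming the same index pair in both preserves $x\prec y$. This is the delicate combinatorial lemma: applying a fixed Robin–Hood operator $R_{ij}$ (average coordinates $i,j$) to both sides preserves majorization. It does, but the clean way to see it is to note $R_{ij}$ is doubly stochastic and, more to the point, that for any doubly stochastic $Q$, $x \prec y \Rightarrow Qx \prec Qy$ fails in general, so one cannot black‑box it — instead one uses that $R_{ij}$ is a very special doubly stochastic matrix (a single $2\times2$ averaging block, i.e.\ a $T$‑transform), for which the implication \emph{does} hold, by a direct check on the partial sums $\sum_{\ell=1}^k (x\downarrow)_\ell$ after the sorted order possibly changes by the swap of $i,j$. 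I would isolate this as a standalone lemma and prove it by the two‑coordinate partial‑sum computation, then feed it into the induction together with the easy fact that an unmatched (non‑edge) step on $K_n$ only decreases majorization level; combining over the uniform choice of pair and taking expectations closes the proof, and the inequality $\mathbb E_v[T_K(t)]\le \mathbb E_v[T_G(t)]$ follows because $T(\cdot)=\|\cdot\|_1$ of the deviation is itself a Schur‑convex (hence majorization‑monotone) functional.
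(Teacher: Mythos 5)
Your plan hinges on a pointwise (per coupled edge-sequence) majorization invariant between the deviation vectors of the two processes, and that invariant is false. Since $w\mapsto\sum_i|w_i|$ is Schur-convex (and the $k=n$ partial-sum inequality in weak submajorization already gives it), your invariant would force $T_K(t)\le T_G(t)$ on every branch of the coupling. The Remark immediately following the theorem in the paper exhibits a branch where this fails: take $G=K_n$ minus the single edge $(1,2)$, start at $(1,0,\dots,0)$, let the first drawn pair be $(1,2)$ (so $K_n$ averages while $G$ idles), and let all subsequent pairs avoid node $2$. The $K_n$ state then tends to $\left(\tfrac1{2(n-1)},\tfrac12,\tfrac1{2(n-1)},\dots\right)$ while the $G$ state tends to $\left(\tfrac1{n-1},0,\tfrac1{n-1},\dots\right)$, so $T_K\approx 1-2/n$ while $T_G\approx 2/n$. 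No pointwise domination survives this branch, so no pathwise invariant of the kind you propose can be propagated.

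Moreover, the lemma you flag as ``delicate'' --- that applying the same averaging operator $R_{ij}$ to both sides of $x\prec y$ preserves majorization --- is simply false, not merely delicate. Take $x=(2,-2,1,-1)$ and $y=(3,-3,0,0)$ (both summing to zero); the sorted partial sums are $(2,3,2,0)$ and $(3,3,3,0)$, so $x\prec y$, but averaging coordinates $1$ and $2$ yields $x'=(0,0,1,-1)$ and $y'=(0,0,0,0)$, and $x'\not\prec y'$ because the top partial sum is $1>0$. The paper's proof avoids both obstacles by arguing only in expectation: it inducts on $t$ with the hypothesis $\mathbb{E}_v[T_K(t-1)]\le\mathbb{E}_v[T_G(t-1)]$ quantified over \emph{all} initial vectors $v$; the matched (edge) step is immediate from the induction hypothesis, and the unmatched step is handled by writing the averaged vector as $\tfrac12 v+\tfrac12\sigma v$ for the transposition $\sigma$ of the two chosen coordinates, then invoking the convexity of $v\mapsto\mathbb{E}_v[T_K(t-1)]$ (Proposition \ref{prop:convex_combination}) together with the permutation symmetry of $K_n$. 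To repair your argument you would have to abandon the pathwise invariant and move to an expectation-level comparison of this kind.
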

\begin{proof}
	Let $v(0):= v = (v_{1},\dots,v_{n})$  be an arbitrary initial vector
	for both processes, and let $v_{K}(t)$ and $v_{G}(t)$ be the state
	vectors for $K=K_{n}$ 
	and $G$, respectively, at time $t$ of the process. We prove the theorem
	by induction on $t$. 
	
	\medskip
	
	\noindent{\it Case $t=0$:} The statement is trivial, since $T_{K}(0)=T_{G}(0)=||v(0)-\bar v||_{1}$.
	
	\medskip
	
	\noindent{\it Inductive step:}
	We prove $\mathbb{E}[T_{K}(t)]\le\mathbb{E}[T_{G}(t)]$
	by conditioning on the first step, and relying 
	on $\mathbb{E}_v[T_{K}(t-1)]\le\mathbb{E}_v[T_{G}(t-1)]$
	for all $v\in \bR^n$, as given by the induction hypothesis. If in the first step a pair $(i,j)\in E(G)$
	is picked, then $v(1)$ is the same for both graphs and we have reduced
	the number of steps by one. The proof, conditioned on this event, becomes a trivial consequence of the induction hypothesis.
	
	Next suppose, that in the first step an
	$(i,j)\not\in E(G)$ is picked. Without loss of generality let us
	assume that $(i,j) =(1,2)$. We have:
	\[
	v_{K}(1)\;=\; v_{1,2} \; = \; \left(\frac{v_{1}+v_{2}}{2},\frac{v_{1}+v_{2}}{2},v_{3},\dots,v_{n}\right),\]
	whereas $v_{G}(1)=v(0)=v$. Observe that 
	\[
	v_{1,2}=\frac{1}{2}(v_{1},v_{2},v_{3,}\dots,v_{n})+\frac{1}{2}(v_{2},v_{1},v_{3},\dots,v_{n}).
	\]
	Now by the convexity lemma proved in Section \ref{subsec:general}  we have that 
	\[
	\mathbb{E}_{v_{1,2}}\left[T_{K}(t-1)\right]\le\frac{1}{2}\mathbb{E}_{(v_{1},v_{2},v_{3,}\dots,v_{n})}\left[T_{K}(t-1)\right]+\frac{1}{2}\mathbb{E}_{(v_{2},v_{1},v_{3,}\dots,v_{n})}\left[T_{K}(t-1)\right].
	\]
	By the symmetry of
	the complete graph $\mathbb{E}_{(v_{1},v_{2},v_{3,}\dots,v_{n})}\left[T_{K}(t-1)\right] = \mathbb{E}_{(v_{2},v_{1},v_{3,}\dots,v_{n})}\left[T_{K}(t-1)\right]$,
	so we get
	\[
	\mathbb{E}_{v_{1,2}}\left[ T_{K}(t-1)\right]\le\mathbb{E}_{v}\left[T_{K}(t-1)\right]
	\; \le \; \mathbb{E}_{v}\left[T_{G}(t-1)\right],
	\]
	where the last inequality is by induction.
	Since we have considered both cases where the first randomly chosen edge is in $E(G)$ and when it is not, and in both cases the conditional expectation of $T_K(t)$ is less than that of $T_G(t)$, the proof is complete.
\end{proof}

\begin{rem}
	Although at first it might seem so,
	$T_{K}(t)\le T_{G}(t)$ does not hold for 
	all points of the event space, only on expectation
	(here we couple the processes for $K=K_{n}$ and for $G$, starting from the same vector $v$,
	in the obvious way, since they both run on infinite edge sequences, where the edges are
	taken from the complete graph). An example 
	is furnished by the following. Let $G$
	be the almost complete graph with missing edge $(1,2)$. Suppose the initial
	vector is $(1,0,\dots,0)$ and at the first step of the Markov chain
	the pair $(i,j)=(1,2)$ is picked. The state vector over the complete
	graph becomes $(1/2,1/2,0,\dots,0)$ and the state vector over $G$
	remains to be $(1,0,\dots,0)$. Suppose that all the subsequent
	moves for a long time pick pairs $(i,j)$ where $i,j\ne2$ (i.e.
	all moves avoid edges incident to $2$). Then after a long time the
	state vector over the complete graph is close to $(\text{\ensuremath{\frac{1}{2(n-1)}}},\frac{1}{2},\text{\ensuremath{\frac{1}{2(n-1)}}},\dots,\text{\ensuremath{\frac{1}{2(n-1)}}})$,
	whereas the state vector over $G$ is close to $(\frac{1}{n-1},0,\frac{1}{n-1},\dots,\frac{1}{n-1})$.
	Therefore, $T_{K}(t\gg1)\approx1-2/n$, and $T_{G}(t\gg1)\approx 2/n$. 
\end{rem}

\begin{rem}
	When we compare Theorem \ref{thm: complete fastest, new process}
	with Corollary \ref{cor: fastest L1}, we will notice that the latter only asserts 
	that (under the original process) the complete graph mixes fastest {\it asymptotically}, i.e. when 
	the number of nodes goes to infinity. When $n$ is fixed and $t$ is small, 
	the star often mixes slightly faster than the complete graph. In contrast, 
	in the case of the new process the complete graph mixes fastest for 
	every $t$ and $n$.
\end{rem}

\begin{figure}
	\begin{centering}
		\includegraphics[scale=0.4]{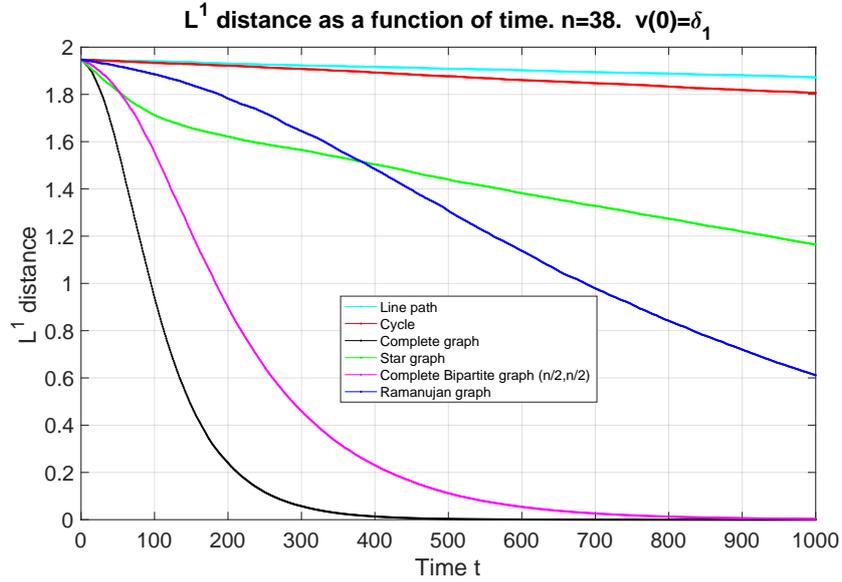}\\
		\includegraphics[scale=0.4]{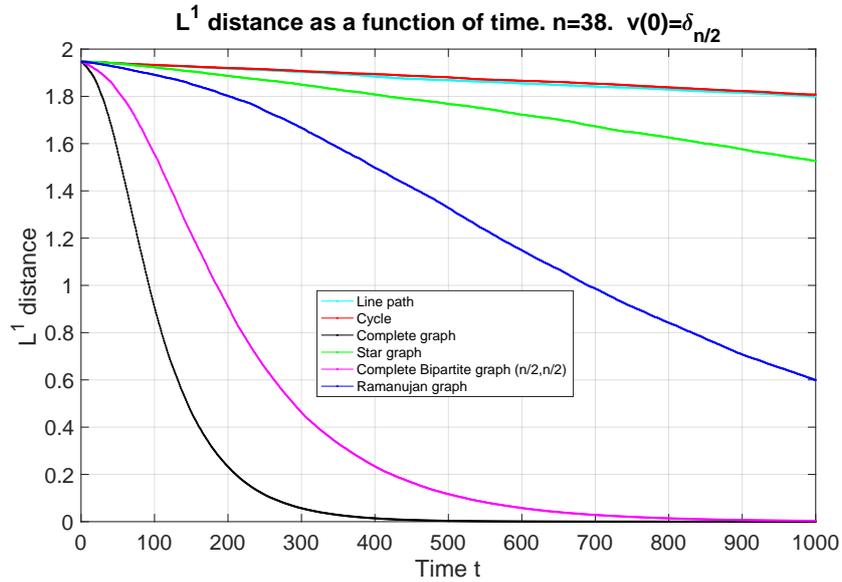}
		\par\end{centering}
	\caption{\label{fig:EDelta}Numerical illustration of $L^1$ convergence (see Theorem  \ref{Thm:Expected_1-norm}). The two plots are different in the initialization of the state, which is mainly relevant in comparing the path and cycle graphs, and the star graph as discussed in the text. The size $n=38$ was dictated by the Ramanujan graph we picked.}
\end{figure}

\subsection{Numerical illustrations for the new process}\label{sec:Numerics2}
In Figure \ref{fig:EDelta} we show $\mathbb{E}[T_{G}(t)]$ vs.
$t=\{0,1,\dots,\tau\}$ 
for various graphs $G$. 

\section*{Acknowledgments}
RM acknowledges the support of the Frontiers institute and the support of MIT-IBM AI lab through the grant ``Machine Learning in Hilbert Spaces''. Guanyang Wang would like to thank Jun Yan, Yuchen Liao, Yanjun Han, and Fan Wei for  helpful discussions.

\section{Proofs}\label{sec:proof}
\subsection{Preparation}\label{subsec:general}
In this subsection, we collect several useful results for the averaging process.
\subsubsection{Averaging matrix}

For a graph $G = (V,E)$ with $n$ nodes, recall that the Laplacian 
is defined as $L = D-A$
where $A$ and $D$ are the adjacency matrix and degree matrix of $G$ respectively. 
Recall that $v(t)$ is a vector-valued random variable determined by the $t^{\rm th}$ step of the averaging process. We prove a formula for the expectation of $v(t)$.

\begin{prop}[Averaging matrix]\label{prop:averaging matrix}
	\begin{eqnarray}
		\mathbb{E}[v(t)] & = & M^{t}v(0)\; ,\label{eq:E_v(t)}
	\end{eqnarray}
	where
	\begin{equation}
		M=I-\frac{1}{2|E|}L\;.\label{eq:M_laplac}
	\end{equation}
\end{prop}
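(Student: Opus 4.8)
The plan is to reduce the statement to a single one-step conditional expectation and then iterate. Write $(I_s,J_s)$ for the edge picked at step $s$, chosen uniformly at random from $E$ and independently of $v(0),\dots,v(s-1)$. For a fixed edge $(i,j)$, let $M_{ij} := I_n - \tfrac12 (e_i-e_j)(e_i-e_j)^T$; one checks directly that $(M_{ij} w)_i = w_i - \tfrac12(w_i-w_j) = \tfrac{w_i+w_j}{2}$, with the same identity for the $j$-th coordinate and no change elsewhere, so that $M_{ij}$ is exactly the averaging map along $(i,j)$ and hence $v(s) = M_{I_sJ_s}\, v(s-1)$ for every $s\ge 1$.

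First I would take the conditional expectation over the $s$-th edge given the past. Since that edge is uniform on $E$ and independent of $v(s-1)$,
\[
\bE[v(s)\mid v(s-1)] \;=\; \frac{1}{|E|}\sum_{(i,j)\in E} M_{ij}\, v(s-1) \;=\; \Bigl(\frac{1}{|E|}\sum_{(i,j)\in E} M_{ij}\Bigr) v(s-1).
\]
The one computation with any content is identifying the averaged matrix. Using the standard edge decomposition of the graph Laplacian, $L = \sum_{(i,j)\in E}(e_i-e_j)(e_i-e_j)^T$, we get
\[
\frac{1}{|E|}\sum_{(i,j)\in E} M_{ij} \;=\; \frac{1}{|E|}\Bigl(|E|\, I_n - \tfrac12 \sum_{(i,j)\in E}(e_i-e_j)(e_i-e_j)^T\Bigr) \;=\; I_n - \frac{1}{2|E|}L \;=\; M .
\]
Hence $\bE[v(s)\mid v(s-1)] = M\, v(s-1)$, and taking expectations (tower property) gives $\bE[v(s)] = M\, \bE[v(s-1)]$.

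Finally I would close by induction on $t$: the base case $t=0$ is trivial, and the inductive step is the displayed identity, $\bE[v(t)] = M\,\bE[v(t-1)] = M\,(M^{t-1} v(0)) = M^t v(0)$. There is no genuine obstacle here; the only points requiring care are the independence of the edge choice from the history (so the conditional expectation factors through multiplication by the fixed matrix $M$) and invoking the rank-one edge decomposition of $L$. (One could also argue coordinate-by-coordinate using linearity of expectation, exactly as in Lemma \ref{lem:singlestep}, but the matrix form above is the cleanest route.)
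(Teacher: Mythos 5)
Your proposal is correct, and its skeleton matches the paper's: establish the one-step identity $\bE[v(s)\mid v(s-1)] = M\,v(s-1)$, then iterate via the tower property. The only genuine difference is how the one-step matrix is identified. The paper argues coordinate-by-coordinate: it computes $\bE[v_i(1)] = (1-\tfrac{d_i}{|E|})v_i(0) + \tfrac{1}{|E|}\sum_{(i,j)\in E}\tfrac{v_i+v_j}{2}$ and reads off $M = I - \tfrac{1}{2|E|}(D-A)$ from the degree and adjacency terms directly. You instead write each averaging move as the rank-one perturbation $M_{ij} = I_n - \tfrac12(e_i-e_j)(e_i-e_j)^T$ and invoke the edge decomposition $L=\sum_{(i,j)\in E}(e_i-e_j)(e_i-e_j)^T$ of the Laplacian. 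Both are two-line verifications; your version is slightly cleaner in that it makes the projection structure of each step explicit (which is also what underlies the $L^2$ analysis in Proposition \ref{Prop:L2_monot}, where the quadratic form $v^TLv$ appears for the same reason), while the paper's version requires no prior facts about $L$ beyond its definition as $D-A$. Your parenthetical caveats about independence of the edge choice from the history are the right things to flag; no gaps.
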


\begin{proof}
	We first prove \eqref{eq:E_v(t)} for $t = 1$. Fix any index $i$, the expected value of $v_i(1)$ equals 
	\[
	\bE[v_i(1)] = (1 - \frac{d_i}{\lvert E \rvert}) v_i(0) + \frac{1}{\lvert E\rvert} \sum_{(i,j)\in E}\frac{v_i + v_j}{2}\; .
	\]
	Writing the above formula in the matrix form yields $\bE[v(1)] = Mv(0)$, which concludes the case $t = 1$. The general case follows from writing $\bE[v(t)]$ as $\bE\left[\bE[v(t)\mid v(t-1)]\right]$ iteratively and using the linearity of expectation. 
\end{proof}

Observe that $M$ is a doubly stochastic matrix. Assuming that the eigenvalues of $L$ are \\ $0=\lambda_{1}\le\lambda_{2}\le\cdots\le\lambda_{n}$, the eigenvalues of $M$ are $\mu_{i}=1-\frac{\lambda_{i}}{2|E|}$
with $1\ge\mu_{1}\ge\mu_{2}\ge\cdots\ge\mu_{n}$. The eigenvalue gap
of the Laplacian, $\lambda_{2}$,  controls the convergence of $\mathbb{E}[v(t)]$
to the uniform distribution with respect to $t$.

\subsubsection{Monotinicity and other useful properties}\label{subsec:monoton}
In this section, we will state several useful properties of the  averaging process. First, the averaging process preserves linear combination, with consequences: 
\begin{prop}\label{prop:lincomb}
	Let $u_{0}$, $v(0)$ be two starting vectors, 
	$\lambda, \mu \in \mathbb{R}$ be two real numbers,
	$u(t), v(t)$ and $w(t)$ are results of the $t$ step averaging process with starting vectors $u_{0}$, $v(0)$ and
	$\lambda u_{0} + \mu v(0)$, respectively.
	Let $r$ be a fixed edge-sequence, i.e. a fix
	random branch for the process. Then 
	\[
	w(t)(r) = \lambda u(t)(r) + \mu v(t)(r) 
	\]
\end{prop}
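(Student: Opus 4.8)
The plan is a routine induction on the number of steps $t$, exploiting the single fact that one averaging step along a \emph{fixed} edge is a linear operator on $\mathbb{R}^n$. For an edge $e=(i,j)$, let $P_e:\mathbb{R}^n\to\mathbb{R}^n$ be the map sending a vector $x$ to the vector whose $i$-th and $j$-th coordinates are both $(x_i+x_j)/2$ and whose other coordinates agree with $x$. This $P_e$ is visibly linear (indeed it is the doubly stochastic matrix $I-\tfrac12(e_i-e_j)(e_i-e_j)^T$). A fixed edge-sequence $r=(e_1,e_2,\ldots)$ determines the trajectory deterministically: for any starting vector $x_0$, the state after $t$ steps along $r$ is $P_{e_t}P_{e_{t-1}}\cdots P_{e_1}x_0$.

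For the base case $t=0$, both sides equal $\lambda u_0+\mu v(0)$ by the definition of the initializations. For the inductive step, assume $w(t)(r)=\lambda u(t)(r)+\mu v(t)(r)$, and let $e_{t+1}$ be the $(t+1)$-th edge of $r$. Applying $P_{e_{t+1}}$ to all three trajectories and using its linearity,
\[
w(t+1)(r)=P_{e_{t+1}}w(t)(r)=\lambda\, P_{e_{t+1}}u(t)(r)+\mu\, P_{e_{t+1}}v(t)(r)=\lambda\, u(t+1)(r)+\mu\, v(t+1)(r),
\]
which closes the induction. One should note explicitly that $u(t+1)(r)=P_{e_{t+1}}u(t)(r)$ (and likewise for $v$ and $w$) is exactly the update rule of the averaging process, and that $r$ being fixed means all three processes use the \emph{same} edge $e_{t+1}$ at step $t+1$, so the operator applied is identical in all three cases.

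There is no genuine obstacle here: the only thing to observe is the linearity of the one-step map, after which the claim is immediate. If one wishes, the same argument can be phrased matrix-theoretically by writing the $t$-step trajectory map along $r$ as the product $M_r := P_{e_t}\cdots P_{e_1}$ and noting $w(t)(r)=M_r(\lambda u_0+\mu v(0))=\lambda M_r u_0+\mu M_r v(0)=\lambda u(t)(r)+\mu v(t)(r)$; this avoids even invoking induction explicitly, pushing it into the well-definedness of the finite product $M_r$.
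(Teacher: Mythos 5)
Your proof is correct, and it is exactly the standard argument the paper has in mind: the authors state that the proof of Proposition \ref{prop:lincomb} is straightforward and omit it, and your observation that each one-step averaging map $P_e = I-\tfrac12(e_i-e_j)(e_i-e_j)^T$ is linear, so that the fixed-branch $t$-step map is the linear operator $P_{e_t}\cdots P_{e_1}$, is precisely the intended (and complete) justification.
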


As a consequence we get:

\begin{prop}[Translation and scaling]\label{prop:translation and scaling}
	Let $v(0)\in \bR^n$ be an arbitrary initial vector, $c, \lambda\in\mathbb{R}$. 
	Let $w_{0} = \lambda v(0) - c$ be another initial vector. Then:
	\[
	w(t) \stackrel{\mathcal L}{=} \lambda v(t) - c
	\]
	where $\stackrel{\mathcal L}{=}$ denotes equality in the distribution sense. 
\end{prop}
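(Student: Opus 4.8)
The plan is to derive this immediately from the linearity property in Proposition~\ref{prop:lincomb}, together with the observation that the all-ones vector is a fixed point of the averaging process along every branch. So this is a short consequence of results already in hand rather than a genuinely new argument.

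First I would rewrite the scalar shift as a vector: $w_{0} = \lambda v(0) - c\,\mathbf{1}$, where $\mathbf{1} = (1,\ldots,1)^{T}$, so that $w_0$ is an honest linear combination of the two starting vectors $v(0)$ and $\mathbf{1}$ with coefficients $\lambda$ and $-c$. Next I would observe that the averaging process started from $\mathbf{1}$ stays at $\mathbf{1}$: averaging two coordinates that are both equal to $1$ leaves them equal to $1$, so by induction on the step count the state is unchanged on \emph{every} fixed edge sequence $r$. Write $z(t)(r) = \mathbf{1}$ for this constant trajectory.

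Then I would apply Proposition~\ref{prop:lincomb} with starting vectors $v(0)$ and $\mathbf{1}$ and scalars $\lambda$ and $-c$: along any fixed branch $r$, the trajectory started from $\lambda v(0) + (-c)\mathbf{1} = w_0$ satisfies
\[
w(t)(r) = \lambda\, v(t)(r) + (-c)\, z(t)(r) = \lambda\, v(t)(r) - c\,\mathbf{1}.
\]
Since this is a branch-by-branch identity under the natural coupling in which both processes are driven by the same random edge sequence, it holds a fortiori in distribution, which is exactly the asserted $w(t) \stackrel{\mathcal L}{=} \lambda v(t) - c$.

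There is no real obstacle here; the only things to be careful about are (i) interpreting the scalar $c$ appearing in the statement as the constant vector $c\,\mathbf{1}$, and (ii) noting that Proposition~\ref{prop:lincomb} actually gives the pathwise equality, so the distributional statement is a weakening of what we prove — worth flagging, since the pathwise/coupled form is typically the one used in later arguments (e.g. the convexity and corner-initialization results).
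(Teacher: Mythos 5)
Your proof is correct and follows exactly the route the paper intends: the paper states Proposition~\ref{prop:translation and scaling} ``as a consequence'' of Proposition~\ref{prop:lincomb} and omits the details, and your argument --- writing $c$ as $c\,\mathbf{1}$, noting $\mathbf{1}$ is fixed pathwise under averaging, and applying the branch-by-branch linearity --- is precisely the straightforward derivation being alluded to. Your remark that the pathwise (coupled) identity is strictly stronger than the stated distributional one is also accurate and consistent with how the result is used later.
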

The proofs of Propositions
\ref{prop:lincomb} and
\ref{prop:translation and scaling}
are straightforward and we omit them here. Proposition \ref{prop:translation and scaling}
allows us to normalize the initial vector 
in convenient ways, for instance when we study the $L^{2}$ norm, we can assume that 
the sum of the coordinates of $v(0)$ is zero and $||v(0)||_{2}=1$.

\begin{prop}[Monotonicity]\label{prop: monotonically}
	Let $f: \bR \rightarrow\bR$ be a convex real function. Define 
	\[
	S_f(v)\equiv \sum_{i=1}^n f(v_i)
	\]
	Then the sequence $S_f(v(i))$ $i=0,1,\ldots$
	of random variables
	is monotonically non-increasing: 
	\begin{align}\label{eqn: monotonically}
		S_f(v(0)) \geq S_f(v(1)) \geq \cdots \geq  S_f(v(t)) \geq \cdots
	\end{align}
\end{prop}
\begin{proof}
	Fix any time step $k$, and assume at time $k$ an random edge $(I,J)$ is chosen, then we have
	$$S_f(v(k+1) - S_f(v(k)) = 2 f\left(\frac{v_I(k) + v_J(k)}{2}\right) - \left(f(v_I(k)) + f(v_I(k))\right) \leq 0$$
	where the last inequality follows from the convexity of $f$. 
\end{proof}

\begin{cor}\label{cor:monotonically, special function}
	Let $p\geq 1$. With the choice of $f(x)=\lvert x \rvert^p$,
	Proposition \ref{prop: monotonically} implies 
	that the $p$-th power of the $L^p$ norm of $v(t)$ is monotonically non-increasing.
\end{cor}

\subsection{Proofs for Section \ref{subsec:L2}}
\subsubsection{Proof of Theorem \ref{Thm:L2_convg}}\label{subsubsec: proof L2}

We start with the following Proposition. First notice that replacing $(v_{i},v_{j})$ by $\left(\frac{v_{i}+v_{j}}{2},\frac{v_{i}+v_{j}}{2}\right)$
decreases $S(t)=\sum_{i=1}^{n}v_{i}^{2}(t)$, the square of the $L^{2}$ norm, exactly by $(v_{i}-v_{j})^{2}/2$. Using this we show that on expectation 
$S(t)$ changes as:
\begin{prop}
	\label{Prop:L2_monot}$\mathbb{E}\left[S(t)|v(t-1)\right]=v^{T}(t-1)\,M\,v(t-1)$
\end{prop}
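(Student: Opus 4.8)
\textbf{Proof proposal for Proposition \ref{Prop:L2_monot}.} The plan is to condition on the single edge picked in step $t$ and use the elementary identity noted just before the statement: replacing $(v_i,v_j)$ by $\left(\frac{v_i+v_j}{2},\frac{v_i+v_j}{2}\right)$ changes $S=\sum_k v_k^2$ by exactly $-\tfrac12(v_i-v_j)^2$. Hence, writing $v=v(t-1)$ and letting $(I,J)$ denote the uniformly random edge chosen at step $t$,
\[
S(t) = S(t-1) - \tfrac12\bigl(v_I - v_J\bigr)^2 .
\]
Taking the conditional expectation over the uniform choice among the $|E|$ edges gives
\[
\mathbb{E}[S(t)\mid v(t-1)] = \lVert v\rVert_2^2 - \frac{1}{2|E|}\sum_{(i,j)\in E}(v_i-v_j)^2 .
\]

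The second ingredient is the standard Laplacian quadratic-form identity $\sum_{(i,j)\in E}(v_i-v_j)^2 = v^T L\, v$, where each edge of $G$ is counted once (this is just $v^T(D-A)v$ expanded). Substituting, we obtain
\[
\mathbb{E}[S(t)\mid v(t-1)] = v^T v - \frac{1}{2|E|}\, v^T L\, v = v^T\!\left(I - \frac{1}{2|E|}L\right)\! v = v^T M\, v ,
\]
which is the claimed formula with $M$ as in \eqref{eq:M_laplac}.

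There is essentially no hard step here; the only things to be careful about are the convention that $E$ lists each edge once (so that the normalizing factor is $\frac{1}{|E|}$, not $\frac{1}{2|E|}$, in the edge average, and the Laplacian identity has no extra factor of $2$), and the sign in the one-step change of $S$. Everything else is a direct linear-algebra rearrangement. If desired, one can double-check consistency with Lemma~\ref{lem:singlestep} / Proposition~\ref{prop:averaging matrix}: those give $\mathbb{E}[v(t)\mid v(t-1)] = M v(t-1)$, but note $\mathbb{E}[S(t)\mid v(t-1)]$ is \emph{not} $\lVert Mv(t-1)\rVert_2^2$ in general, so the computation above via the exact per-step decrement is the right route.
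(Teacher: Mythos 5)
Your proposal is correct and follows essentially the same route as the paper's own proof: condition on the uniformly chosen edge, use the exact per-step decrement $\tfrac12(v_i-v_j)^2$ of $S$, and convert the edge sum into the Laplacian quadratic form $v^T L\, v$ to recover $v^T M v$. Your cautionary remarks about the single-counting convention for $E$ and the non-equality of $\mathbb{E}[S(t)\mid v(t-1)]$ with $\lVert Mv(t-1)\rVert_2^2$ are both apt and consistent with the paper.
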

\begin{proof}
	Since each edge is picked uniformly at random we have 
	\[
	S(t-1)-\mathbb{E}\left[S(t)|v(t-1)\right]=\frac{1}{|E|}\sum_{(i,j)\in E}\frac{\left(v_{i}(t)-v_{j}(t)\right)^{2}}{2}=\frac{1}{2|E|}v^{T}(t-1)\,L\,v(t-1)\;,
	\]
	where the latter follows from the quadratic form expression of the
	Laplacian. Using Eq.~\eqref{eq:M_laplac} we conclude that 
	\begin{equation}
		\mathbb{E}\left[S(t)|v(t-1)\right]=S(t-1)-\frac{1}{2|E|}v^{T}(t-1)\,L\,v(t-1)=v^{T}(t-1)\,M\,v(t-1)\; .\label{eq:Prop_L2_key}
	\end{equation}
\end{proof}
This leads us to the upper and lower bounds about the $L^{2}$ convergence.

\begin{proof}[Proof of Theorem \ref{Thm:L2_convg}]
	With no loss of generality, we take $\sum_{i=1}^{n}v_i(0)=0$.
	Since $M$ is a real symmetric matrix we write its spectral decomposition $
	M=\sum_{i=1}^{n}\left(1-\frac{\lambda_{i}}{2|E|}\right)u_{i}u_{i}^{T},$
	where $\left(1-\frac{\lambda_{i}}{2|E|}\right)$ are the eigenvalues
	and $u_{i}$ are the corresponding set of orthonormal eigenvectors.
	$M$ is a doubly stochastic matrix that can be viewed as a transition
	matrix of an irreducible Markov chain. The standard Markov chain theory
	guarantees a unique largest eigenvalue $1$ with the corresponding
	eigenvector $u_{1}=\frac{1}{\sqrt{n}}(1,1,\dots,1)$. We now prove
	the upper and lower bound on the $L^{2}$ norm.
	
	For the upper-bound, from Eq.~\eqref{eq:Prop_L2_key} in Prop. \ref{Prop:L2_monot}
	and the spectral decomposition of $M$ we have
	\begin{eqnarray*}\label{eq:ES_t}
		\mathbb{E}\left[S(t)|v(t-1)\right] & = & \sum_{i=2}^{n}\left(1-\frac{\lambda_{i}}{2|E|}\right)|u_{i}^{T}v(t-1)|^{2}\\
		& \le & \left(1-\frac{\lambda_{2}}{2|E|}\right)\sum_{i=2}^{n}|u_{i}^{T}v(t-1)|^{2}=\left(1-\frac{\lambda_{2}}{2|E|}\right)S(t-1)\;,
	\end{eqnarray*}
	where the first equality uses the zero-mean choice $\sum_{i=1}^{n}v_{i}(t-1)=0$. Taking an expectation of both sides with respect to $v(t-1)$ yields $
	\mathbb{E}[S(t)]\le\left(1-\frac{\lambda_{2}}{2|E|}\right)\mathbb{E}[S(t-1)].$ Solving the foregoing recursion we find $\mathbb{E}[S(t)]\le\left(1-\frac{\lambda_{2}}{2|E|}\right)^{t}\mathbb{E}[S(0)]$.
	Since $\mathbb{E}[S(t)]=\left\Vert v(t)\right\Vert _{2}^{2}$ we have
	\[
	\sqrt{\mathbb{E}[\left\Vert v(t)\right\Vert _{2}^{2}]}\le\left(1-\frac{\lambda_{2}}{2|E|}\right)^{t/2}\left\Vert v(0)\right\Vert _{2}\;.
	\]
	For the lower-bound, we prove it by taking the initial condition to be the
	second eigenvector $v(0)=u_{2}$ (if there is a multiplicity, then
	pick any in the eigenspace). Recall that by Eq.~\eqref{eq:E_v(t)} we
	have 
	\[
	\mathbb{E}[v(t)]=\left(1-\frac{\lambda_{2}}{2|E|}\right)^{t}u_{2}\;.
	\]
	This along with the fact that $\left\Vert u_{2}\right\Vert _{2}^{2}=1$
	give
	\begin{eqnarray*}
		\mathbb{E}[\left\Vert v(t)\right\Vert _{2}^{2}] & = & \sum_{i=1}^{n}\mathbb{E}\left[v_{i}^{2}(t)\right]\ge\sum_{i=1}^{n}\left(\mathbb{E}[v_{i}(t)]\right)^{2}\\
		& = & \left(1-\frac{\lambda_{2}}{2|E|}\right)^{2t} \sum_{i=1}^{n}u_{2,i}^{2}=\left(1-\frac{\lambda_{2}}{2|E|}\right)^{2t}.
	\end{eqnarray*}
	Since $u_{2}^{T}u_{1}=0$, we have that $\sum_{i=1}^{n}u_{2,i}=0$
	and hence $\bar{v}=0$. The desired lower-bound then follows
	\[
	\sup_{\left\Vert v(0)\right\Vert _{2}=1}\sqrt{\mathbb{E}[\left\Vert v(t)-\bar{v}\right\Vert _{2}^{2}]}\ge\underbrace{\sqrt{\mathbb{E}[\left\Vert v(t)\right\Vert _{2}^{2}]}}_{v(0)=u_{2}}=\left(1-\frac{\lambda_{2}}{2|E|}\right)^{t}.
	\]
\end{proof}
\subsubsection{Proof of Corollary \ref{Cor:L2mixing}}\label{subsubsec: corL2}
\begin{proof}[Proof of Corollary \ref{Cor:L2mixing}]
	Notice that $\left\Vert v(0)\right\Vert _{2}^{2} = \left\Vert v(0)-\bar{v}\right\Vert _{2}^{2}+\left\Vert \bar{v}\right\Vert _{2}^{2}=1$
	and we have $\left\Vert v(0)-\bar{v}\right\Vert _{2}^{2}\le1$. Eq.
	\eqref{eq:thmL2_Upp} in the first part of Theorem \ref{Thm:L2_convg}
	easily gives
	\[
	\sqrt{\mathbb{E}[\left\Vert v(t)-\bar{v}\right\Vert _{2}^{2}]}\le\left(1-\frac{\lambda_{2}}{2|E|}\right)^{t/2}\left\Vert v(0)-\bar{v}\right\Vert _{2}\le e^{-\frac{t}{4|E|}\lambda_{2}}\quad,
	\]
	where we used $\left\Vert v(0)-\bar{v}\right\Vert _{2}=1$ and $1-x\le e^{-x}$.
	Solving for $t$ in $\epsilon=e^{-\frac{t}{4|E|}\lambda_{2}}$ gives
	the upper bound $4\gamma(G)\log(\epsilon^{-1})$ in
	Eq.~\eqref{eq:L2_Mixing}.
	
	From Eq.~\eqref{eq:thmL2_Low} in the second part of Theorem \ref{Thm:L2_convg}
	we have 
	\begin{eqnarray*}
		\sup_{\left\Vert v(0)\right\Vert _{2}=1}\sqrt{\mathbb{E}\left[\left\Vert v(t)-\bar{v}\right\Vert _{2}^{2}\right]} & \ge & \left(1-\frac{\lambda_{2}}{2|E|}\right)^{t}
		=  \exp\left[t\log\left(1-\frac{\lambda_{2}}{2|E|}\right)\right]\\
		& \ge & \exp\left[\frac{-t\lambda_{2}}{2|E|\left(1-\frac{\lambda_{2}}{2|E|}\right)}\right]
	\end{eqnarray*}
	where in the last inequality we used $\log(1-x)>-x/(1-x)$. Once more
	solving for $t$ in $\epsilon=\exp\left[\frac{-t\lambda_{2}}{2|E|\left(1-\frac{\lambda_{2}}{2|E|}\right)}\right]$
	yields the lower bound in Eq.~\eqref{eq:L2_Mixing}
	\[
	t=\left(1-\frac{\lambda_{2}}{2|E|}\right)\frac{2|E|}{\lambda_{2}}\log(\epsilon^{-1}) = (2\gamma(G) - 1) \log  (\epsilon^{-1} ).
	\]
\end{proof}

\subsubsection{Proof of the examples in Table \ref{tab:L2}}\label{subsubsec: proof examples L2}
With Theorem \ref{Thm:L2_convg} and Corollary \ref{Cor:L2mixing} in hand, now we can estimate the $L^2$ convergence speed for several different graphs.

\begin{eg}[Complete graph]\label{eg: complete, L2}
	The complete graph $K_n$ has $n$ nodes and  $|E(K_n)| = \frac{n(n-1)}{2}$ edges. It is well-known that the eigenvalues of the graph Laplacian $L(K_n)$ are: 
	$$ \lambda_1 = 0\;; \qquad \lambda_2 = \cdots = \lambda_n=n\;,$$
	and Theorem \ref{Thm:L2_convg} and Corollary \ref{Cor:L2mixing} respectively imply:
	\begin{align*}
		\bigg(1- \frac 1{n-1}\bigg)^t \leq \sup_{\{v(0): \lVert v(0)\rVert_2 = 1\}} \sqrt{\bE[\lVert v(t) - \bar v\rVert_2^2]}  \leq \bigg(1- \frac 1{n-1}\bigg)^{t/2},
	\end{align*}
	and
	\begin{align*}
		(n-2)\log(\epsilon^{-1}) \leq &t_{\epsilon, 2}(K_n) \leq (2n-2) \log(\epsilon^{-1}).
	\end{align*}
	Our bounds match the magnitude of the result in \citep[Proposition 2.1]{chatterjee2019phase}. 
\end{eg}

\begin{eg}[Cycle with $n$ nodes]\label{fig:C_5}
	A cycle graph $C_n$ is a graph with $n$ nodes that consists of a single cycle. We know $|E(C_n)| = n$ and the Laplacian $L(C_n)$ can also be diagonalized explicitly, with  eigenvalues $2 - 2\cos(kw)$ where $w \equiv \frac{2\pi}{n}$ and $0\leq k\leq \frac{n}{2}$.  Since $\cos(w) = \cos(\frac{2\pi}{n})\sim 1 - \frac{2\pi^2}{n^2}$ when $n$ is large, from Corollary \ref{Cor:L2mixing} we have that $t_{\epsilon,2}(C_n) = \Theta(n^3)\log (\epsilon^{-1})$. 
	
\end{eg}

\begin{eg}[Star graph]\label{eg: star, L2}
	A star graph $S_{n-1}$ is a tree with $n$ nodes. It contains  one root and $n-1$ leaves.  We know $|E(S_{n-1})| = n-1$ and the second smallest eigenvalue of $S_{n-1}$ equals $1$. Theorem \ref{Thm:L2_convg} and Corollary \ref{Cor:L2mixing} imply:
	\begin{align*}
		\bigg(1- \frac 1{2 (n-1)}\bigg)^t \leq \sup_{\{v(0): \lVert v(0)\rVert_2 = 1\}} \sqrt{\bE[\lVert v(t) - \bar v\rVert_2^2]}  \leq \bigg(1- \frac 1{2 (n-1)}\bigg)^{t/2}
	\end{align*}
	and 
	\begin{align*}
		(2n-3)\log (\epsilon^{-1}) \; \leq \; t_{\epsilon,2} \; \leq \;  (4n-4)\log (\epsilon^{-1}).
	\end{align*}
	Thus $t_{\epsilon,2}(S_{n-1}) = \Theta(n)\log(\epsilon^{-1})$.

\end{eg}

\begin{eg}[Binary tree]\label{eg: binary tree, L2}
	A balanced full binary tree $B_n$ has $n-1$ edges and  $\log_2(n+1)$ levels. It is difficult to explicitly diagonalize the graph Laplacian $L(B_n)$, but it is well known (from the Cheeger's inequality) that the second smallest eigenvalue $\lambda_2(B_n)= \Theta(\frac 1n)$. More precisely, from  \cite[Lemma 3.8]{guattery1994performance} we have
	\[
	\frac 1n < \lambda_2(B_n) < \frac 2n\;.
	\]
	And  Theorem \ref{Thm:L2_convg} and Corollary \ref{Cor:L2mixing} imply:
	\begin{align*}
		\bigg(1- \frac 1{n (n-1)}\bigg)^t < \sup_{\{v(0): \lVert v(0)\rVert_2 = 1\}} \sqrt{\bE[\lVert v(t) - \bar v\rVert_2^2]}  < \bigg(1- \frac 1{2n (n-1)}\bigg)^{t/2}
	\end{align*}
	and 
	\begin{align*}
		(n^2-n-1)\log (\epsilon^{-1}) \; < \; t_{\epsilon,2} \; < \;  4(n^2-n)\log (\epsilon^{-1}).
	\end{align*}
	Thus $t_{\epsilon,2}(B_{n}) = \Theta(n^2)\log (\epsilon^{-1})$.
\end{eg}

\subsection{Proofs for Section \ref{subsec:L1}}
\subsubsection{Proof of Theorem \ref{thm: t_1}}\label{subsubsec: proof L1}

\begin{proof}[Proof of Theorem \ref{thm: t_1}]
	For the upper bound, Theorem \ref{Thm:L2_convg} and Cauchy-Schwarz give:
	\begin{align*}
		\bE[\lVert v(t) - \bar v\rVert_1] &=   \bE \left[ \sum_{i=1}^n\lvert v_i(t) - \bar v\rvert\right] \;  \leq \; \sqrt n \; \bE \left[\sqrt{\sum_{i=1}^n \lvert v_i(t) - \bar v \rvert^2 }\; \right] \\&\leq \sqrt{n} \; \sqrt{\bE[\lVert v(t) - \bar v\rVert_2^2]}  \leq \sqrt n\;  \bigg(1- {1 \over 2 \gamma(G)}\bigg)^{t/2}  \lVert v(0) - \bar v\rVert_2.
	\end{align*}
	Since $\lVert v(0)\rVert_2\leq \lVert v(0)\rVert_1= 1$, we have 
	$\lVert v(0) -\bar v\rVert_2\leq 1$. Therefore
	solving for $t$ in $\sqrt n\;  \bigg(1- {1 \over 2 \gamma(G)}\bigg)^{t/2} = \epsilon$ gives us the upper bound in Eq.~\eqref{eqn: t_1, first formula}.

	For the lower bound, we take
	$v(0) = u_{2}/\lVert u_2 \rVert_1 $ (so $\bar v = \vec 0$) where $u_2$ is the  eigenvector of $L(G)$ corresponding to $\lambda_2$. After the normalization it is clear that $\lVert v(0) \rVert_1 = 1$.  Recall that
	$\bE[v(t)] = M^t v(0)$, which gives:
	\begin{align}\nonumber
		\bE[\lVert v(t)\rVert_1]  \; \geq \;&
		\sum_{i=1}^n \bigg|\bE [ v_i(t)] \bigg| 
		= \;  \lVert M^{t} v(0) \rVert_1
		\; = \; 
		\bigg\lVert \bigg(1- {1 \over 2 \gamma(G)}\bigg)^t  v(0)\bigg\rVert_1\\
		\; = & 
		\bigg(1- {1 \over 2 \gamma(G)}\bigg)^t 
		\geq \exp{\left[-\frac {t}{2\gamma(G)}/ \left(1 - \frac {1}{2\gamma(G)}\right)\right]}.
	\end{align}
	Again, solving for $t$ in $\exp{\left[(-\frac {t}{2\gamma(G)})/ (1 - \frac {1}{2\gamma(G)})\right]} = \epsilon$
	gives us the lower bound in Eq.~ \eqref{eqn: t_1, first formula}.
\end{proof}
\subsubsection{Remaining Proofs for Section \ref{subsubsec: universallower} : $\Omega(n\log n)$ lower bound}
\label{subsubsec:univproof}



\paragraph{Proof of Corollary \ref{cor:star}}
\begin{proof}
	The spectral properties of star (see example \ref{eg: star, t12}) shows the mixing time is between $\Theta(n)$ and $\Theta(n\log n)$ for every fixed $\epsilon$. Theorem \ref{thm:genlb} gives a universal $\Omega(n\log n)$ lower bound. Therefore we conclude $t_{\epsilon,1}(S_{n-1}) = \Theta_\epsilon(n\log n)$.
\end{proof}

\paragraph{Proof of Corollary \ref{cor: fastest L1}}
\begin{proof}{Proof of Corollary \ref{cor: fastest L1}}
	It is known in Chatterjee et al. \cite{chatterjee2019phase} that for every $2 > \epsilon >0$, 
	$$
	\frac{(2\log2) t_1(\epsilon, K_n)}{n\log(n/2)} \rightarrow 1.
	$$
	
	Meanwhile, we have $t_1(\epsilon, G_n) \geq \frac{(1-\epsilon)}{2\log 2}n\log(n) - O(n)$ from Theorem \ref{thm:genlb}. Combining the two results together and Corollary \ref{cor: fastest L1} immediately follows. 
\end{proof}

\subsubsection{Remaining Proofs for Section \ref{subsubsec:improved bound} : $\alpha$-covering, flow, comparison, and splitting}
\paragraph{Proof of Proposition \ref{prop: t_1, alternative bound}}\label{para: proof covering time}
\begin{proof}
	Suppose one starts at $v(0) = e_i$, and let $T_\text{cov}(1-\epsilon, i)$ be the first time $v(t)$ has $(1-\epsilon) n$ non-zero elements. Then for any $t \leq T_\text{cov}(1-\epsilon, i)$, we have:
	\[
	\lVert v(t) - \bar v\rVert_1  = \lVert v(t) - \frac 1n \mathbf{1} \rVert_1  \geq \epsilon n \cdot \frac{1}{n} = \epsilon,
	\]
	which implies
	\[
	t_{\epsilon,1} \geq \bE[T_\text{cov}(1-\epsilon, i)] = \tcov(1-\epsilon, i).
	\]
	
	Since $i$ is arbitrary, we conclude $t_{\epsilon,1} \geq \tcov(1-\epsilon)$. 
\end{proof}
\paragraph{Proof of Proposition \ref{prop: nearly regular graph, covering time}}\label{para: proof near regular}
\begin{proof}
	Without loss of generality we assume the process is initialized at $e_1 = (1,0,\cdots, 0)$. Let $n_0 \equiv (1-\epsilon)n$. Let $W_i$ be the `waiting time' of the $(i+1)$-th non-zero coordinate given the vector has $i$ non-zero coordinate. Clearly $\tcov(1-\epsilon)(G_n) \geq \bE[W_1 + W_2 + W_3 + \cdots W_{n_0}]$. 
	
	Given the vector has $k$-non zero coordinates, the waiting time of $W_k$ follows a geometric distribution with success probability no larger than
	\[
	\frac{k M}{|E(G_n)|} \leq 2\,C\, \frac{k}{n} 
	\]
	since there are at most $kM$ edges which connects between the zero and non-zero elements. Therefore, $\bE[W_k]\geq \frac{n}{2Ck}$ for every $k\leq n_0$, and summing up over $\bE[W_k]$ yields
	\[
	\tcov(1-\epsilon)(G_n) \geq \sum_{k = 1}^{n_0} \frac{n}{2Ck} \geq \frac{n}{C}\log(n_0) = \frac{n}{2C}\log\left((1-\epsilon)n\right).
	\]
\end{proof}
\paragraph{Proof of Proposition \ref{prop:expander}}\label{para: proof of expander example}
\begin{proof}
	It follows from the definition of the expander graph that $\gamma(G_n) = \Theta(n)$, and therefore $t_{\epsilon,1}(G_n)$ is between $\Theta(n)$ and $\Theta(n\log n)$. Meanwhile, it follows from Proposition \ref{prop: nearly regular graph, covering time} that $t_{\epsilon,1}(G_n)$ is $\Omega(n\log n)$. Combining the two facts together gives us the desired result. 
\end{proof}

\paragraph{Proof of Theorem \ref{thm: dumbbell}}\label{para:dumbbell}
\begin{proof}
	Let $D_{n}$ be a Dumbbell graph on the node set 
	$\{1,\ldots, n\} \cup \{n + 1,\ldots, 2n\}$.
	The edge set of $D_{n}$ is $\{(i,j), (n+i,n+j)\mid \; 1\le i < j\le n\}$, plus a special edge $e=(n,2n)$. 
	A lower bound of $O(n^{3})$ is immediately  
	implied by the spectrum, but it is not necessary to refer
	to this, because as soon as we understand how the typical
	process takes place, both the lower and upper bounds follow.
	Below we sketch only the proof of the $\Omega(n^3)$
	upper bound.
	
	Let the clique on
	$\{1,\ldots, n\}$ be denoted by $L$, and the clique
	on $\{n + 1,\ldots, 2n\}$ be denoted by $R$ (for ``left'' and ``right''). We also call edge $e=(n,2n)$ a {\em bridge}. 
	We shall decompose
	the edge sequence of the process to sub-sequences,
	separated by those events when $e$ is picked. 
	Let $S_{1},S_{2},$ etc. denote these sub-sequences, so the 
	process looks like $S_{1}eS_{2}e\ldots$.
	We call the run of each $S_{i}$ a {\em phase},
	while averaging over the edge $e$ an {\em equalization step}.
	
	Let $M_{i}$ be the maximum and $m_{i}$ the minimum
	of the state vector right before the $i^{th}$ phase. We work with the assumption that $|v_{0}|_{1} = 1$, and $v_{0}$ is non-negative, so $1\ge M_{1}\ge m_{1}\ge 0$. We shall prove that for any fixed $\epsilon>0$ 
	there is a $K_{\epsilon} = O(n)$, 
	that after any period of $K_{\epsilon}$ 
	subsequent phases ending with the $\ell^{\rm th}$, with very high
	probability $M_{\ell}-m_{\ell}<{\epsilon\over n}$ 
	(independently of the prior $\ell-K_{\epsilon}$ phases). 
	We leave it to the avid reader to recognize that this 
	together with the phase length distribution (given below in item 1.) 
	implies the theorem.
	Let $\Delta_{i} = M_{i}-m_{i}$.
	Since $\Delta_{i}$ never increases, it is sufficient to show, that as long as $\Delta_{i} >{\epsilon\over n}$,
	with very high probability (0.5 is in fact sufficient)
	$\Delta_{i+2}$ reduces to below
	$\min\{2/n,\; \Delta_{i} -
	\Omega(1/n^{2})\}$, independently of 
	the history before the $i^{\rm th}$ phase, i.e. of phases from 1 to $i-1$. Note: All $\Omega$, $O$, $\Theta$ notations
	depend on $\epsilon$. 
	We again leave it to the insightful reader that the above fact proves the fact before. (It is interesting to note, that in the first phase
	the progress will be very quick: $\Delta_{2}$ is likely 
	$\le 10/n$. Then additional $O(n)$ phases 
	are needed to get $\Delta_{i}$
	to below $\epsilon/n$.)
	Observe:
	\begin{enumerate}
		\item The length of each $S_{i}$ follows a geometric distribution:
		\begin{equation}\label{eq:lengthsi}
			\bP(|S_{i}|=k) =\left(1 - {1\over n(n-1)+1}\right)^{-k}
			{1\over n(n-1)+1}.
		\end{equation}
		\item Each $S_{i}$ consists of moves 
		$S_{i}^{L}$ made on $L$ and $S_{i}^{R}$ made on $R$.
		\item Saying it differently, we recover the usual
		distribution on edge sequences for the dumbbell graph if for each $1\le i\le \infty$ we construct $S_{i}$,
		$S_{i}^{L}$, $S_{i}^{R}$ as follows:
		(i.) We first randomly decide at the length ${\cal L}_i$ of $S_{i}$ according to Formula (\ref{eq:lengthsi}).
		(ii.) We create a random $LLRRL\ldots$ sequence 
		of length ${\cal L}_i$, made from symbols `$L$'s and `$R$'s, and (iii.) We replace
		every symbol `$L$' with a random edge from $L$,
		and every symbol `$R$' with a random edge from $R$.
		\item Due to the previous construction,
		$S_{i}^{L}$ and $S_{i}^{R}$ are independent moves 
		on $L$ and $R$, respectively. Therefore
		we can apply the analysis for the clique graph when
		determining their effects.
		
		\item $\EE(|S_{i}|) = n(n-1)$, and for large enough $n$:
		\begin{equation}\label{eq:dum2}
			\bP\left[\underbrace{|S_{i}^{L}|< {n(n-1)\over 1000}\;\; \wedge \;\; |S_{i}^{R}|< {n(n-1)\over 1000}}_{{\rm event}\; A}\right] < 0.01
		\end{equation}
	\end{enumerate}
	Let $M_{i}^{L}$, $m_{i}^{L}$, $M_{i}^{R}$, $m_{i}^{R}$
	be the maximums and minimums on $L$ and $R$ before the $i^{\rm th}$ phase.
	Conditioned on that $|S_{i}^{L}|,|S_{i}^{R}|
	\ge {n(n-1)\over 1000}$ one can show from the clique result, that after running $S_{i}$, due to the very quick convergence 
	on the clique to the uniform vector: 
	\begin{equation}\label{eq:maindum}
		\bP\left[\underbrace{\max\{M_{i}^{L} - m_{i}^{L},\; M_{i}^{R} - m_{i}^{R}\}\ge 
			\epsilon/100 n^{2}}_{{\rm event}\; B}|  A^c\right]\le 0.01.
	\end{equation}
	When event $B$ does not happen, we have $\Delta_{i+1}< 2/n$. Further, still under $B$ does not happen,
	and also assuming $\Delta_{i} >{\epsilon\over n}$, during the equalization step following $S_{i}$, an amount $\ge {\epsilon\over 10n}$
	flows through $e$ from $L$ to $R$, if
	$M_{i}^{L}> M_{i}^{R}$, otherwise from 
	$R$ to $L$. Then applying
	Estimate (\ref{eq:maindum}), but at this time for $S_{i+1}$,
	we get the reduction $\Delta_{i+2}\le \Delta_{i} -
	\frac{\epsilon}{20n^2}$ with high probability.
\end{proof}
\paragraph{Proof for the cycle graph}\label{para:cycle}
Now we claim the following comparison lemma between the splitting process defined in Section \ref{para:comparison splitting}.

\begin{proof}[Proof of Lemma \ref{lem:comparison cycle}]
	We start with  the second inequality. It is clear that each splitted sequence $v^i(t)$ is still non-increasing by design of the process. Therefore we have $$\lVert v^i(t) - \bar{v^i}(t) \rVert_1 \leq  n(v^i_1(t) - v^i_n(t)).$$ Summing both sides from $i$ to $N_t$ gives the second inequality.
	
	We prove the first inequality by induction. When $t = 1$, the inequality follows immediately from the triangle inequality. Let $T(t)$ be the $L^1$ distance between the vector $v(t)$ and uniform vector  of the original process after step $t$, and $\tilde T(t)$ be the same quantity of the splitting process. Suppose the inequality $\bE[T(t)]\leq \bE[\tilde T(t)]$ is true for every $t\leq (s-1)$ and every non-increasing initialization. The $t = s$ case can be analyzed by a first-step analysis as follows.
	
	If the edge $(n, 1)$ is not chosen in the first step, then both processes will evolve in the same way, and $v(1)$ is still non-increasing, thus the inequality follows from the induction hypothesis. 
	
	If the edge $(n,1)$ is chosen, and the original process  has conditional expectation:
	\begin{align*}
		\bE\left[T_v(t)|(n,1)~ \text{is chosen}\right] & = \bE\left[T_{\left(\frac{v_1 + v_n}{2},v_2, \dots, v_{n-1},\frac{v_1 + v_n}{2} \right)}(t-1)\right]\\
		&=  \bE\left[T_{\left(\frac{v_1 + v_n}{2},\frac{v_1 + v_n}{2}, v_2, v_3,\dots, v_{n-1}\right)}(t-1)\right],
	\end{align*}
	where the last equality follows from the cycle structure, as the initializations are equivalent up to a circular shift.
	
	Suppose $(v_1 + v_n)/2\geq v_2$, then the sequence $\left((v_1 + v_n)/2,(v_1 + v_n)/2, v_2, \dots, v_{n-1}\right)$ is already non-increasing, which means $N_1 = 1$ and $\tilde v(1)$ equals $\left((v_1 + v_n)/2,(v_1 + v_n)/2, v_2, \dots, v_{n-1}\right)$, thus the first inequality also follows from the induction hypothesis. 
	
	Suppose $(v_1 + v_n)/2 < v_2$, then let $k$ denotes the largest index such that $v_k > (v_1 + v_n)/2$. For the original process we have
	\begin{align*}
		\bE\left[T_v(t)|(n,1)~ \text{is chosen}\right]  &= \bE\left[T_{\left(\frac{v_1 + v_n}{2},\frac{v_1 + v_n}{2}, v_2, v_3,\dots, v_{n-1}\right)}(t-1)\right] \\
		&\leq  \bE\left[ T_{\left(\frac{v_1 + v_n}{2},\dots, \frac{v_1 + v_n}{2}, v_{k+1}, \dots, v_{n-1}\right)}(t-1)\right]  \\
		& + \bE\left[T_{\left(0,0, v_2 - \frac{v_1 + v_n}{2},\dots, v_k - \frac{v_1 + v_n}{2}, 0, \cdots 0\right)}(t-1)\right] \\
		&\leq  \bE\left[\tilde T_{\tilde{v}^1(1)}(t-1)\right]   + \bE\left[\tilde T_{\tilde{v}^2(1)}(t-1)\right] = \bE\left[\tilde T_v(t)\,:\,(n,1)~ \text{is chosen}\right],
	\end{align*}
	where the first inequality follows from the fact that
	\begin{align*}
		\left(\frac{v_1 + v_n}{2},\frac{v_1 + v_n}{2}, v_2, \dots, v_{n-1}\right) = & \left(\frac{v_1 + v_n}{2}, \frac{v_1 + v_n}{2},\dots, \frac{v_1 + v_n}{2}, v_{k+1}, \dots, v_{n-1}\right)\\ 
		& + \left(0,0, v_2 - \frac{v_1 + v_n}{2}, \dots, v_k - \frac{v_1 + v_n}{2}, 0, \dots,0\right)
	\end{align*}
	and then use the triangle inequality. The second equality uses the fact that the initialization $(0,0, v_2 - \frac{v_1 + v_n}{2}, \cdots, v_k - \frac{v_1 + v_n}{2}, 0, 0, \cdots,0)$ is equivalent to the initialization \\ $(v_2 - \frac{v_1 + v_n}{2}, \cdots, v_k - \frac{v_1 + v_n}{2}, 0, 0, \cdots,0)$ because one is a circular shift of the other. The last inequality follows from the induction hypothesis, and the fact that
	\[
	\tilde v^1(1) = \left(\frac{v_1 + v_n}{2}, \frac{v_1 + v_n}{2},\dots, \frac{v_1 + v_n}{2}, v_{k+1}, \dots, v_{n-1}\right)^T
	\]
	and
	\[
	\tilde v^2(1) = \left(v_2 - \frac{v_1 + v_n}{2}, \dots, v_k - \frac{v_1 + v_n}{2}, 0,  \dots,0\right)^T
	\]
	are both non-increasing. Now we have exhausted all the possible cases for the first step, and therefore  we conclude that $\bE[T_v(t)] \leq  \bE[\tilde T_v(t)]$ for every $t$.
\end{proof}

The first inequality of the comparison lemma shows the splitting process `converges' slower than the averaging process, whereas the second inequality shows it suffices to bound the probability of  $\bP[\tilde v_1(t) - \tilde v_n(t)\geq \frac{\epsilon}{n}]$ in order to bound $\bE[\tilde T_v(t)]$.

Now we are ready to bound the $L^1$ convergence for the averaging process on the cycle $C_n$. Spectral arguments prove it takes $\Theta(n^3)$ to $\Theta(n^3\log n)$ steps for $L^1$ convergence, and we are trying to show $\Theta(n^3)$ is the correct magnitude. 

We assume $v = e_1 = (1,0, \cdots,0)\in \bR^n$ is the initial vector. Note that assuming $e_1$ is the initialization entails no loss of generality, as the slowest initialization happens on a corner, but every corner has the same convergence speed for a cycle graph. 

We will use the quantity $\tilde Q(t)$ to bound $\bE[\tilde T_v(t)]$ for the splitting process $PC_2$. Our next result shows $\tilde Q(t)$ is monotonically non-increasing with $t$.

\begin{proof}[Proof of Lemma \ref{lem: monotonicity Q(t)}]
	Since the splitting process happens independently for every splitted sequence, it suffices to show for any fixed non-increasing sequence $a = (a_1, a_2,\cdots a_n)$, after one step of the splitting process, the quantity $\tilde Q$ always decays, and in expectation decays no less than $(a_1 -  a_n)/2n$. Suppose edge $(1,n)$ is not chosen, it follows from straightforward calculation that the quantity $Q$ will decrease (since $Q(a_1, \cdots, a_n) = a_1 + (a_1 + a_2) + \cdots + (a_1 + a_2 + \cdots + a_n)$, and each term will not increase after averaging $a_i$ and $a_{i+1}$). If $(1,n)$ is chosen and $(a_1 + a_n)/2 \geq a_2$, then the new vector becomes $((a_1 + a_n)/2, (a_1 + a_n)/2, a_2, \dots, a_n)$ and we can directly check   
	\begin{align*}
		Q\left(\frac{a_1 + a_n}{2}, \frac{a_1 + a_n}{2}, a_2, \dots, a_{n-1}\right) &- Q\left(a_1, a_2,\dots, a_n\right)\\ &= -\frac{a_1}{2}  - a_2 - a_3 - \dots - a_{n-1} + \left(n - \frac{3}{2}\right) a_n\\
		& = \frac{a_n - a_1}{2} + (a_n - a_2) + (a_n - a_3) + \dots + (a_n - a_{n-1})\\
		&\leq 0\;.
	\end{align*}
	
	Otherwise, again let $k$ be the largest index such that $a_k > (a_1 + a_n)/2$, then the sequence will be splitted into two, and the two sequences add up to
	\[
	\tilde a = \left(a_2, a_3, \dots, a_k, \frac{a_1+ a_n}{2}, \frac{a_1+ a_n}{2}, a_{k+1}, \dots, a_{n-1}\right),
	\]
	and we have
	\begin{align*}
		Q(\tilde a) - Q(a) < 0
	\end{align*}
	for the same reason.  Since $\tilde v(t)$ is the summation of $N_t$ sequences, and $Q$ is a linear function, we have $\tilde Q(t) = \sum_{i=1}^{N_t} Q(v^i(t))$, as the $Q$ function decays on each individual sequence, we know $\tilde Q(t)$ decays with $t$. 
	
	For the second inequality, still we focus on one fixed sequence $(a_1, \cdots, a_n)$, after one step of the splitting process, it has $\frac{n-1}{n}$ probability of not choosing $(1,n)$, which contributes an average decay of at least (it is at least because we ignore the contribution when choosing $(1,n)$, which is non-negative)
	\[
	\frac{1}{n} \left(\frac{a_1 - a_2}{2} + \frac{a_2 - a_3}{2}  + \cdots +\frac{a_{n-1} - a_n}{2} \right) = \frac{a_1 - a_n}{2n}.
	\]
	Again, given the previous state $\tilde v(t-1)$, summing up all the individual sequences $\tilde v^i(t-1)$ from $i =1$ to $N_{t-1}$ yields the second inequality.
\end{proof}
Finally, we are ready to show our main result.

\begin{proof}[Proof of Theorem \ref{thm:cycle}]
	It suffices to show $t_{\epsilon,1}(C_n)\leq C_\epsilon n^3$. Let $E(t)$ be the event that $\{\tilde v_1(t) - \tilde v_n(t) \geq \frac{\epsilon}{2n}\}$. By design we have $E(t) \subset E(t-1) \subset E(t-2)\cdots $, as $\tilde v_1(t)$ is non-increasing with $t$ and $\tilde v_n(t)$ is non-decreasing with $t$. Under the complement of $E(t)$, we know $\tilde T_v(t) \leq \frac{\epsilon}{2}$, under $E(t)$ we know $\tilde T_v(t)$ is no larger than its maximum possible value $2$, as $\tilde v(t)$ is a non-negative vector which sum up to $1$. 
	Therefore, by the comparison lemma we have
	\begin{align*}
		\bE[T_v(t)] \leq \bE[\tilde T_v(t)] \leq \frac{\epsilon}{2} + 2 \bP(E(t)).
	\end{align*}
	To bound $\bP(E(t))$, we have:
	\begin{align*}
		n &= \bE[\tilde Q(1)] \geq \sum_{i=1}^\infty \bE[\tilde Q(i) - \tilde Q(i+1)] \geq \sum_{i=1}^{t} \bE[\tilde Q(i) - \tilde Q(i+1)] \\
		&\geq t \frac{\epsilon}{4n^2}\bP[E(t)].
	\end{align*}
	Take $t \geq \frac{16n^3}{\epsilon^2}$ and we conclude $\bE[T_v(t)] \leq \epsilon$, thus $t_{\epsilon,1}(C_n)  \leq \frac{16n^3}{\epsilon^2}$.

\end{proof}

\subsection{Proofs for Section \ref{subsec:L2->1}}
\subsubsection{Proof of Proposition \ref{prop: comparison t_1 and t_12}}\label{subsubsec: proof comaprson t_1 and t_12}
\begin{proof}
	For every vector $v$ with $\lVert v \rVert_1 = 1$, we have $\lVert v \rVert _2\leq 1$. Therefore if we define
	\[
	t_{\epsilon}(v) \equiv \min\{t\in \ZZ: {\bE[\lVert v(t) - \bar v\rVert_1]} \leq \epsilon\}.
	\]
	Then by our previous observation and Proposition \ref{prop:translation and scaling} we have $
	t_{\epsilon}(v) \leq t_{\epsilon}(v/\lVert v \rVert_2) $,
	which implies $\bE[t_\epsilon(v)] \leq  t_{\epsilon,2\rightarrow 1}$
	for every $v$ with $\lVert v \rVert_1 = 1$. Then by definition of $t_{\epsilon,1}$ we conclude $
	t_{\epsilon,1}\leq t_{\epsilon,2\rightarrow 1}$.
\end{proof}

\subsubsection{Proof of Proposition \ref{prop: L21 convergence speed}}\label{subsubsec: proof of L21 speed}
\begin{proof}
	For the upper bound, Theorem \ref{Thm:L2_convg} and Cauchy-Schwarz give:
	\begin{align*}
		\bE[\lVert v(t) - \bar v\rVert_1] &=   \bE \left[ \sum_{i=1}^n\lvert v_i(t) - \bar v\rvert\right] \;  \leq \; \sqrt n \; \bE \left[\sqrt{\sum_{i=1}^n \lvert v_i(t) - \bar v \rvert^2 }\; \right] \\&\leq \sqrt{n} \; \sqrt{\bE[\lVert v(t) - \bar v\rVert_2^2]}  \leq \sqrt n\;  \bigg(1- {1 \over 2 \gamma(G)}\bigg)^{\frac t2}  \lVert v(0) - \bar v\rVert_2.
	\end{align*}
	
	For the lower-bound, again we prove it by taking the initial condition to be the
	second eigenvector $v(0)=u_{2}$. Recall that by Eq.~\eqref{eq:E_v(t)} we
	have 
	\[
	\mathbb{E}[v(t)]=\left(1-\frac{\lambda_{2}}{2|E|}\right)^{t}u_{2}.
	\]
	This along with the fact that $\left\Vert u_{2}\right\Vert _{2}^{2}=1$
	give
	\begin{eqnarray*}
		\mathbb{E}[\left\Vert v(t)\right\Vert _{1}] & = & \sum_{i=1}^{n}\mathbb{E}\left[\lvert v_{i}(t)\rvert \right]\geq \lVert\bE[v(t)]\rVert_1\geq \lVert\bE[v(t)]\rVert_2 \\
		& = & \sqrt{\left(1-\frac{\lambda_{2}}{2|E|}\right)^{2t} \sum_{i=1}^{n}u_{2,i}^{2}}=\left(1-\frac{\lambda_{2}}{2|E|}\right)^{t}.
	\end{eqnarray*}
	The desired lower-bound then follows
	\[
	\sup_{\left\Vert v(0)\right\Vert _{2}=1}\mathbb{E}[\left\Vert v(t)-\bar{v}\right\Vert _{1}]\geq\left(1-\frac{1}{2\gamma(G)}\right)^{t}.
	\]
\end{proof}

\subsubsection{Proof of Corollary \ref{cor: t_12}}\label{subsubsec: proof of t_12}
\begin{proof}
	According to Proposition \ref{prop: L21 convergence speed}, for starting vectors $v(0)$ 
	with $\lVert v(0) \rVert_{2}= 1$, it holds that
	\begin{equation}\label{start2}
		\bE[\lVert v(t) - \bar v\rVert_1] \; \leq \; \bigg(1- {1 \over 2 \gamma(G)}\bigg)^{\frac t2}  \sqrt{n}  \lVert v(0) - \bar v\rVert_2 \; \leq \; \exp\left(  \frac{-t}{4\gamma(G)}\right)\sqrt{n}
	\end{equation}
	where the second inequality uses the fact that $\lVert v(0) - \bar v\rVert_2^2 + \lVert \bar v\rVert_2^2 = \lVert v(0) \rVert_2^2 = 1$. 
	Solving the equation 
	$\exp\left(  \frac{-t}{4\gamma(G)}\right)\sqrt{n} \; = \; \epsilon$
	we get
	$
	t \; = \;4\gamma(G)\log (\sqrt n \epsilon^{-1}) .$
	This gives us the upper-bound in Eq.~\eqref{eqn: t_12, first formula}. The lower bound follows directly from the same calculation and the lower bound in Proposition \ref{prop: L21 convergence speed}.
\end{proof} 

\subsubsection{Proof of Theorem \ref{thm: t_12,delocalize}}\label{subsubsec:t12,delocalize}
\begin{proof}
	The upper bound  is precisely the same as Corollary \ref{cor: t_12}, thus it suffices to show the lower bound. Again, set 
	$v(0) = u_{2} $  where $u_2$ is the unit eigenvector of $L(G)$ corresponding to $\lambda_2$. Recall that
	$\bE[v(t)] = M^t v(0)$, which gives:
	\begin{align}\nonumber
		\bE[\lVert v(t)\rVert_1]  \; &\geq \; 
		\sum_{i=1}^n \bigg|\bE [ v_i(t)] \bigg| 
		= \;  \lVert M^{t} v(0) \rVert_1
		= \; 
		\bigg(1- {1 \over 2 \gamma(G)}\bigg)^t \lVert u_2 \rVert_1\\
		&\geq \delta \;\sqrt n\; \exp\left[-\frac {t}{2\gamma(G)}/ \left(1 - \frac {1}{2\gamma(G)}\right)\right],
	\end{align}
	where the last inequality uses the fact that $u_2$ is $\delta$-delocalized. 
	Solving the equation 
	\[
	\delta \;\sqrt n\; \exp\left[-\frac {t}{2\gamma(G)}/ \left(1 - \frac {1}{2\gamma(G)}\right)\right] = \epsilon
	\]
	gives the desired result.
\end{proof}

\subsubsection{Proof of the examples in Table \ref{tab:L21}}\label{subsubsec: example L21}
Now we apply Theorem \ref{thm: t_12,delocalize} on the concrete examples discussed before to get the correct magnitude of $t_{\epsilon,2\rightarrow 1}$.

\begin{eg}[Complete graph, continued]\label{eg: complete, t12}
	As shown in Example \ref{eg: complete, L2}, the complete graph $K_n$ has $\lambda_2 = n$ and thus $t_{\epsilon,2}(K_n) = \Theta(n)$. Applying Corollary \ref{cor: t_12} shows $t_{\epsilon,2\rightarrow 1}$ is between $\Theta(n)$ and $\Theta(n\log n)$. Meanwhile, every vector $u\in \bR^n$ with $\sum_{i = 1}^n u_i  = 0$ is an eigenvector of $L(K_n)$ with eigenvalue $n$. Therefore, we may choose $u_2 = (\frac 1{\sqrt n}, \cdots, \frac 1{\sqrt n}, -\frac 1{\sqrt n}, \cdots, -\frac 1{\sqrt n})$ when $n$ is even and $u_2 = (\frac 1{\sqrt {n-1}}, \cdots, \frac 1{\sqrt {n-1}}, -\frac 1{\sqrt {n-1}}, \cdots, -\frac 1{\sqrt {n-1}}, 0)$ when $n$ is odd. In either case $u_2$ is at least $\sqrt{\frac{n-1}{n}}$-delocalized. Therefore, applying Theorem \ref{thm: t_12,delocalize} yields
	
	\begin{align*}
		\frac{n-2}{2} \log((n-1)\epsilon^{-2})\leq t_{\epsilon,2\rightarrow 1}(K_n) \leq (n-1)\log(n \epsilon^{-2}),
	\end{align*}
	which shows $\Theta_\epsilon(n\log n)$ is the correct magnitude of $t_{\epsilon,2\rightarrow 1}(K_n)$. 
\end{eg}

\begin{eg}[Cycle graph, continued]\label{eg: cycle, t12}
	As shown in Example \ref{fig:C_5}, the cycle graph $C_n$ has\\ $\lambda_2 = 2 - 2\cos(\frac{2\pi}{n})$ and $t_{\epsilon,2} = \Theta_{\epsilon}(n^3)$. Applying Corollary \ref{cor: t_12} shows $t_{\epsilon,2\rightarrow 1}(C_n)$ is between $\Theta_{\epsilon}(n^3)$ and $\Theta_{\epsilon}(n^3\log n)$. 
	Meahwhile, the Laplacian $L(C_n)$ can be diagonalized explicitly. In particular, the normalized eigenvector of $\lambda_2$ is $u_2 = \sqrt{\frac{2}{ n}} (\cos(w), \cos(2w), \cdots, \cos(n w))^T$ where $w = \frac{2\pi}{n}$. Now we calculate the $L^1$-norm of $u_2$, we have: 
	\begin{align*}
		\sqrt{\frac{2}{ n}} \lVert u_2 \rVert_1 =  \sum_{k = 1}^n |\cos(kw)| \geq 4\sum_{k = 1}^{\lfloor n/4 \rfloor} \cos(kw)\;.
	\end{align*}
	Using the identity $
	\cos(w) + \cos(2w) + \cdots + \cos(lw) = \frac{\sin(\frac {lw}{2}) \cos(\frac{lw +w}{2})}{\sin(\frac{w}{2})}$
	we conclude \\$4\sum_{k = 1}^{\lfloor n/4 \rfloor} \cos(kw) = \Theta(n)$ since in the denominator $\sin(\frac w2) = \Theta(\frac 1n)$ and the numerator is $\Theta(1)$. Therefore, there exists some universal constant $\delta$ such that $\lVert u_2 \rVert_1 \geq \delta \sqrt n \lVert u_2 \rVert_2$. Then we  apply Theorem \ref{thm: t_12,delocalize} and conclude that $\Theta_\epsilon(n^3 \log n)$ is the correct magnitude of  $t_{\epsilon,2\rightarrow 1}(C_n)$.
\end{eg}

\begin{eg}[Star graph, continued]\label{eg: star, t12}
	As shown in Example \ref{eg: star, L2}, the star graph $S_{n-1}$ with $n$ nodes has $\lambda_2 = 1$ and $t_{\epsilon,2} = \Theta(n)$. Therefore Corollary \ref{cor: t_12} implies $t_{\epsilon,2\rightarrow 1}(S_{n-1})$ is between $\Theta(n)$ and $\Theta(n\log n)$. Meanwhile, if we label the root node by $1$ and the remaining leaves by $2, \cdots, n$. One can diagonalize $L(K_n)$, and observe that every vector $u \in \bR^n$ which satisfies $u_1 = 0$ and $\sum_{i=2}^n u_i = 0$ is  eigenvector of $L(K_n)$ with respect to $\lambda_2 = 1$. Therefore, we may choose $u_2 = \frac 1{\sqrt {n-1}}(0,1, \cdots, 1, -1, \cdots, -1)$  when $n$ is odd and $u_2 = \frac 1{\sqrt {n-2}}(0,0,1, \cdots, 1, -1, \cdots, -1)$ when $n$ is even. In both cases $u_2$ is at least $\sqrt{\frac{n-2}{n}}$-delocalized. Therefore, applying Theorem~\ref{thm: t_12,delocalize} yields
	\[
	\frac{2n-3}{2} \log((n-2)\epsilon^{-2})
	\leq   t_{\epsilon,2\rightarrow 1}  \leq  (2n-2) \log(n\epsilon^{-2}),
	\]
	which shows $t_{\epsilon,2\rightarrow 1}(S_{n-1}) = \Theta_\epsilon(n\log n)$.
\end{eg}

Though Theorem \ref{thm: t_12,delocalize} is successfully applied in the above examples to get the correct magnitude of $t_{\epsilon,2\rightarrow 1}$, it requires the precise knowledge of the second eigenvector $u_2$, which is often not available in practical settings. In the next example, we show an alternative way to derive the correct magnitude of $t_{\epsilon,2\rightarrow 1}$ without using Theorem \ref{thm: t_12,delocalize}. Our idea is partially motivated by the proof of the Cheeger's inequality \citep{cheeger2015lower}.

\begin{eg}[Binary tree, continued]\label{eg: binary tree, t12}
	As shown in Example \ref{eg: binary tree, L2}, a balanced full binary tree $B_n$ has $n-1$ nodes, $\lambda(B_n) = \Theta(\frac 1n),$ and $t_{\epsilon,2} = \Theta(n^2)$. Therefore, Corollary \ref{cor: t_12} implies $t_{\epsilon,2\rightarrow 1}(B_{n})$ is between $\Theta_\epsilon(n^2)$ and $\Theta_\epsilon(n^2\log n)$.  Unlike the previous examples, we do not know the explicit structure of $u_2$, therefore it require new tools. We claim $t_{\epsilon,2\rightarrow 1}(B_{n}) = \Theta_\epsilon(n^2\log n)$.
\end{eg}

\begin{proof}[Proof of the claim in Example \ref{eg: binary tree, t12}: $t_{\epsilon,2\rightarrow 1}(B_{n}) = \Theta_\epsilon(n^2\log n)$]
	Let us first label the binary tree as follows. We label the root by $1$, and label the nodes of the left subtree by $2, 3, \cdots, \frac{n+1}{2}$, and the nodes in the right subtree by $\frac{n+3}{2}, \cdots, n$. We further notice that a binary tree with $n$ nodes has $\log_2(n+1)$ levels, and we label the levels by $1, \cdots, k = \log_2(n+1)$, from bottom to top. 
	
	Now let us consider the following unit vector $v(0) = \frac{1}{\sqrt {n-1}}(0,1, \cdots, 1, -1,\cdots, -1)$. In other words, the value of the root node equals $0$, the value of every node in the left subtree equals $\frac {1}{\sqrt{n-1}}$, and the value of every node in the right subtree equals $-\frac{1}{\sqrt{n-1}}$. 
	
	Let $v(0)$ be the initial vector for the averaging process, and we are going to prove it takes at least $\Theta(n^2\log(n))$ steps for the process to mix in $L^1$ distance. We claim the following properties of this process starting with $v(0)$:
	
	\begin{itemize}
		\item For every $t$, $\bE[v_1(t)] = 0$.
		\item For every $t$, $\bE[v_j(t)] \geq 0$, if $2\leq j \leq \frac{n+1}{2}$, and $\bE[v_j(t)] \leq 0$ if $j > \frac{n+1}{2}$.
		\item For every $t$, let $j$ be any node in the left subtree (i.e.,~$j\leq \frac{n+1}{2}$), we have $\bE[v_2(t)] \leq \bE[v_j(t)]$. Similarly, let $j$ be any node in the right subtree (i.e.,~$j\geq \frac{n+3}{2}$), we have $\bE[v_{\frac{n+3}{2}}(t)] \geq \bE[v_j(t)]$.
	\end{itemize}
	Roughly, we claim that the root vertex always has expectation $0$, the left subtree always has positive expectation while the right subtree always has negative expectation. In particular, the top node (node $2$) in the left subtree has the smallest (but positive) expectation  among all the nodes in the left, while the top node (node $\frac{n+3}{2}$) on the right subtree has the largest (but negative) expectation among all the nodes in the right. 
	
	All of the above properties can be proved by induction, and we will only sketch the proof here. 
	
	It is clear that every node in the left subtree has the same expectation as long as they are at the same level. Let $l^t_s$ be the expectation of the nodes in the $s$-th level of the left subtree after $t$ steps, and likewise $r^t_s$ for expectation of the nodes in the $s$-th level of the right subtree. It is clear that we have $l^t_s = - r^t_s$. For $2\leq s \leq k-1$ (except for the bottom level), we have the recursion:
	\[
	l^t_s = \frac{2}{n-1} l^{t-1}_{s+1} + \frac{1}{n-1}l^{t-1}_{s-1} + \frac{n-4}{n-1}l^{t-1}_{s}
	\] 
	and for $s = k$, we have
	\[
	l^t_k = \frac{1}{n-1}l^{t-1}_{k-1} + \frac{n-2}{n-1}l^{t-1}_{k}.
	\]
	This recursion can be directly used to prove the first and second claim inductively.  The last claim can also be proved as we can show  $l_1^t \leq l_2^t \leq l_3^t \leq \cdots \leq l_k^t$ for every $t$ inductively. 
	
	Now we are ready to estimate the decay of $v(t)$ to $\bar v = \vec 0$ in $L^1$ metric. Let $M^t \equiv \sum_{i=2}^{\frac{n-1}{2}} v_i(t)$ be the sum of coordinates in the left subtree. Given $v(t-1)$, we know $M^t$ equals $M^{t-1}$ with probability $\frac{n-2}{n-1}$ (as long as the edge $(1,2)$ is not chosen), and if $(1,2)$ is chosen, $M^{t} = M^{t-1} - \frac{v_2(t-1) - v_1(t-1)}{2}$. In other words, we have
	$$
	\bE[M^t|v(t-1)] = M^{t-1} - \frac{1}{n-1} \frac{v_2(t-1) - v_1(t-1)}{2}\; .
	$$
	We can further take expectation with respect to $v(t-1)$ and get
	\begin{align}\label{eqn: binary tree, left sum}
		\bE[M^t] = \bE[M^{t-1}] - \frac{\bE[v_2(t-1)]}{2(n-1)} 
	\end{align}
	where the last equality uses $\bE[v_1(t)] = 0$. From our third claim, we further have:
	\[
	\bE[M^t] = \bE[M^{t-1}] - \frac{\bE[v_2(t-1)]}{2(n-1)}  \geq \bE[M^{t-1}] - \frac{\bE[M^{t-1}]}{(n-1)^2} = \left(1 - \frac{1}{(n-1)^2}\right) \bE[M^{t-1}]\; .
	\]
	
	Therefore, we know:
	\begin{align*}
		\bE\lVert v(t) - \bar v  \rVert_1 =  \bE\lVert v(t)  \rVert_1 \geq \bE[M^t] &\geq \left(1 - \frac{1}{(n-1)^2}\right)^t M^0\\ 
		&= \left(1 - \frac{1}{(n-1)^2}\right)^t \frac{\sqrt{n-1}}{2}\\
		&\geq \frac{\sqrt{n-1}}{2} e^{-\frac{t}{(n-1)^2 - 1}}\; .
	\end{align*}
	Therefore, solving the equation
	\[
	\frac{\sqrt{n-1}}{2} e^{-\frac{t}{(n-1)^2 - 1}}= \epsilon
	\]
	yields,
	\[
	t_{\epsilon,2\rightarrow 1} (B_n) \geq \frac{(n-1)^2 - 1}{2} \log\left(\frac{n-1}{4}\epsilon^{-2}\right),
	\]
	which shows $\Theta(n^2 \log n)$ is also the lower bound. Thus $t_{\epsilon,2\rightarrow 1} (B_n) = \Theta(n^2\log n)$. 
\end{proof}

\bibliography{arxiv_MSW.bib} 

\end{document}